\documentclass[12pt]{amsart}
\usepackage{tgtermes}
\usepackage[linkcolor=blue, citecolor = blue, linktocpage = true]{hyperref}
\hypersetup{colorlinks=true}

\usepackage{amsthm,amsfonts,amsmath,amssymb,amscd} 

\usepackage{verbatim}
\usepackage{float}
\usepackage{wrapfig}
\usepackage{mathtools}
\usepackage[color=orange!20, linecolor=orange]{todonotes}

\usepackage{enumitem}
\usepackage{caption}
\usepackage{subcaption}
\usepackage{lscape}
\usepackage{rotating}

\newtheorem{theorem}{Theorem}[section]
\newtheorem{proposition}[theorem]{Proposition}
\newtheorem{lemma}[theorem]{Lemma}
\newtheorem{corollary}[theorem]{Corollary}
\newtheorem{conjecture}[theorem]{Conjecture}

\theoremstyle{definition}
\newtheorem{definition}[theorem]{Definition}
\newtheorem{example}[theorem]{Example}

\theoremstyle{remark}
\newtheorem{remark}[theorem]{Remark}

\newcommand{\sgn}{\mathrm{sgn}}

\DeclarePairedDelimiter\floor{\lfloor}{\rfloor}

\setlength{\textwidth}{6.3in} 
\setlength{\oddsidemargin}{0.1in} 
\setlength{\evensidemargin}{0.1in} 
\setlength{\topmargin}{-0.0in}
\setlength{\textheight}{8.5in}


\title[Some unimodal sequences of Kronecker coefficients]
{Some unimodal sequences of Kronecker coefficients} 

\author[Alimzhan Amanov \and Damir Yeliussizov]{Alimzhan Amanov \and Damir Yeliussizov}

\address{KBTU, Almaty, Kazakhstan}
\email{\href{mailto:alimzhan.amanov@gmail.com}{alimzhan.amanov@gmail.com}, \href{mailto:yeldamir@gmail.com}{yeldamir@gmail.com}}


\begin{document}
\begin{abstract}
	We conjecture unimodality for some sequences of generalized Kronecker coefficients and prove it for partitions with at most two columns. 
	The proof is based on a hard Lefschetz property for corresponding highest weight spaces. 
	We also study more general Lefschetz properties, show implications to a higher-dimensional analogue of the Alon--Tarsi conjecture on Latin squares and give related positivity results. 
\end{abstract}
\maketitle
\section{Introduction}

\subsection{Kronecker coefficients and unimodality}
\textit{The (generalized) Kronecker coefficient} denoted by 
$g(\lambda^{(1)},\ldots,\lambda^{(d)})$ is the multiplicity of $[(m)]$ in the tensor product $[\lambda^{(1)}]\otimes\ldots\otimes[\lambda^{(d)}]$, where 
$\lambda^{(i)}$ are partitions of $m$ and $[\mu]$ denotes the irreducible representation of the symmetric group $S_m$ indexed by partition $\mu$ of $m$. 

\vspace{0.5em}

A sequence $\{ x_n\}_{n = a, \ldots, b}$ 
is \textit{unimodal} if $x_a \le \ldots\le x_m \ge \ldots\ge x_b$ for some $m \in [a,b]$, and it is {\it symmetric} if $x_i = x_{a + b - i}$ for all $i \in [a,b]$.

\vspace{0.5em}

Let us denote $g_d(n,k) := g(n\times k,\ldots,n\times k)$ (repeated $d$ times) for specific {rectangular} Kronecker coefficients, where $n \times k := (k,\ldots,k)$ ($n$ times).
We studied these coefficients in \cite{ay22}; in particular, for fixed $k$ and odd $d \ge 3$ the sequence $\{g_{d}(n, k)\}_{n=0,\ldots,k^{d-1}}$ is symmetric (note that $g_d(n,k) = 0$ for $n > k^{d-1}$), and we made the following conjecture on its unimodality. 
\begin{conjecture}[\cite{ay22}]\label{conj:ay}
	Let $d \ge 3$ be odd and $k$ be even. Then the sequence 
	$\{g_{d}(n, k)\}_{n=0,\ldots,k^{d-1}}$ 
	is unimodal.
\end{conjecture}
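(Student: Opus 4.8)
The plan is to reinterpret $g_d(n,k)$ as a dimension of invariants inside an exterior power, and then to deduce symmetry and unimodality from a hard Lefschetz property; throughout I use that $d$ is odd.

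\textbf{Step 1 (reduction to exterior powers).} From $[(k^n)]=[(n^k)]\otimes\sgn$ and $\sgn^{\otimes d}=\sgn$ (here $d$ odd) one gets $g_d(n,k)=\langle\sgn,\,[(n^k)]^{\otimes d}\rangle_{S_{kn}}$. By the dual Cauchy identity (applied iteratively) this equals the multiplicity of $\bigotimes_{i=1}^{d}S_{(n^k)}(V_i)$ in $\bigwedge^{kn}(V_1\otimes\cdots\otimes V_d)$ for $\dim V_i\ge k$; taking $V_i=\mathbb{C}^k$ and using $S_{(n^k)}(\mathbb{C}^k)=(\det)^{\otimes n}$ identifies $g_d(n,k)$ with the multiplicity of $\chi^{n}$ in $\bigwedge^{kn}W$, where $W:=(\mathbb{C}^k)^{\otimes d}$ and $\chi:=\det_1\boxtimes\cdots\boxtimes\det_d$ is the product‑of‑determinants character of $\mathrm{GL}_k^{\times d}$. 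Since the center of $\mathrm{GL}_k^{\times d}$ acts on all of $\bigwedge^{kn}W$ by the single scalar character $\chi^{n}$, this multiplicity equals $\dim\bigl(\bigwedge^{kn}W\bigr)^{\mathrm{SL}_k^{\times d}}$. As $\dim W=k^{d}$, these invariants vanish for $n>k^{d-1}$, matching the range in Conjecture \ref{conj:ay}.

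\textbf{Step 2 (the Lefschetz operator).} Applying Step 1 with $n=1$ gives $\dim\bigl(\bigwedge^{k}W\bigr)^{\mathrm{SL}_k^{\times d}}=g_d(1,k)=g((k),\dots,(k))=1$ ($d$ copies), so there is a unique‑up‑to‑scalar $k$‑vector $\omega\in\bigwedge^{k}W$ on which $\mathrm{GL}_k^{\times d}$ acts through $\chi$. Wedging with $\omega$ is $\mathrm{SL}_k^{\times d}$‑equivariant and carries $\bigwedge^{kn}W$ into $\bigwedge^{k(n+1)}W$, so it restricts to maps $M_n\to M_{n+1}$ where $M_n:=\bigl(\bigwedge^{kn}W\bigr)^{\mathrm{SL}_k^{\times d}}$; thus $M=\bigoplus_{n=0}^{k^{d-1}}M_n$ is a finite graded $\mathbb{C}[\omega]$‑module with $\dim M_n=g_d(n,k)$. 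Conjecture \ref{conj:ay} then reduces to the \emph{hard Lefschetz property} for $\omega$: that $\omega^{j}\wedge(-)\colon M_n\to M_{n+j}$ has maximal rank for all $n,j$ with $n+j\le k^{d-1}$. Indeed, since $\{g_d(n,k)\}_n$ is symmetric by \cite{ay22}, maximal rank of $\omega^{k^{d-1}-2n}\wedge(-)\colon M_n\to M_{k^{d-1}-n}$ forces it to be an isomorphism, so $\omega\wedge(-)\colon M_n\to M_{n+1}$ is injective whenever $2n<k^{d-1}$, and unimodality follows.

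\textbf{Step 3 (the two‑column case, and the obstacle).} The reductions above are routine; the real content is the hard Lefschetz property for $\omega$, which I expect to be the main obstacle in general. For $k=2$ it is classical. Here $W$ carries the bilinear form $\Omega:=\eta_1\otimes\cdots\otimes\eta_d$, the tensor product of the standard symplectic forms $\eta_i$ on the factors $\mathbb{C}^2$; it is $\mathrm{GL}_2^{\times d}$‑semiinvariant, non‑degenerate as a tensor product of non‑degenerate forms, and skew‑symmetric \emph{precisely because $d$ is odd} (it picks up $(-1)^{d}$ under transposing its arguments). Thus $W$ carries an $\mathrm{SL}_2^{\times d}$‑invariant symplectic structure, with associated bivector the semiinvariant $\omega\in\bigwedge^{2}W$ of Step 2, and the classical hard Lefschetz theorem for the exterior algebra of a symplectic vector space applies: $\bigwedge^{\bullet}W\cong\bigoplus_{j\ge 0}V^{\mathrm{Sp}}_{\omega_j}\boxtimes V^{\mathfrak{sl}_2}_{2^{d-1}-j}$ as a module over $\mathrm{Sp}(W)\times\mathfrak{sl}_2$, where the Lefschetz $\mathfrak{sl}_2$ commutes with $\mathrm{Sp}(W)\supseteq\mathrm{SL}_2^{\times d}$ and so acts on $M=\bigl(\bigwedge^{\bullet}W\bigr)^{\mathrm{SL}_2^{\times d}}$, making it a finite‑dimensional $\mathfrak{sl}_2$‑module with $M_n$ the weight‑$(2n-2^{d-1})$ space. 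Since the weight multiplicities of such a module are symmetric and unimodal, this settles partitions with at most two columns (and re‑proves the symmetry of \cite{ay22}). For even $k\ge 4$ the same scheme reduces Conjecture \ref{conj:ay} to a Lefschetz property for the higher‑degree form $\omega$, which is genuinely new and is exactly the sort of \emph{more general Lefschetz property} one must establish; for odd $k$ the scheme cannot work at all, since then $\omega\wedge\omega=(-1)^{k^{2}}\,\omega\wedge\omega=0$, consistent with the hypothesis that $k$ be even.
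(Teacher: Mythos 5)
This statement is a conjecture that the paper itself establishes only for $k=2$ (Theorem~\ref{th:main}); your proposal likewise proves only the $k=2$ case and honestly reduces the general even $k$ case to an unproven Lefschetz property, so you and the paper end up in the same place. For $k=2$ your argument is essentially the paper's: both identify $g_d(n,k)$ with the dimension of the degree-$kn$ piece of the $\mathrm{SL}_k^{\times d}$-invariants (resp.\ highest weight spaces) of $\bigwedge^\bullet (\mathbb{C}^k)^{\otimes d}$, take wedging with the Cayley form $\omega$ as the Lefschetz operator, and for $k=2$ obtain hard Lefschetz from an $\mathfrak{sl}_2$-action. The only difference is packaging: you invoke the classical symplectic Lefschetz decomposition (Howe duality for $\mathrm{Sp}(W)\times\mathfrak{sl}_2$, using that $\eta_1\otimes\cdots\otimes\eta_d$ is skew precisely for odd $d$ and has $\omega_{d,2}$ as its bivector), whereas the paper writes $\omega=\sum_{\mathbf i\in I^+}e_{\mathbf i}\wedge e_{\bar{\mathbf i}}$ and verifies the commutation relations $[X,Y]=H$ directly following Bourbaki --- the same classical fact. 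Your reduction in Step~2 (maximal rank of the powers of $\omega$ plus symmetry implies unimodality) matches Proposition~\ref{proposition:main-uni}. One point worth noting: for $k\ge 4$ the Lefschetz property must be formulated on the invariant (or highest weight) subspaces $M_n$, as you do; the paper shows in Theorem~\ref{th:neg} that the naive version on the full exterior powers $\bigwedge^n V\to\bigwedge^{n+k}V$ actually fails for $k>2$, so your remark that the higher-$k$ case is ``genuinely new'' is accurate and consistent with the paper's findings.
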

For example, we have the following sequences: 
$$\{g_3(n, 4) \}_{n = 0, \ldots, 16} = 1, 1, 1, 2, 5, 6, 13, 14, 18, 14, 13, 6, 5, 2, 1, 1, 1,$$
$$\{g_5(n, 2) \}_{n = 0, \ldots, 16} = 1, 1, 5, 11, 35, 52, 112, 130, 166, 130, 112, 52, 35, 11, 5, 1, 1.$$

\vspace{0.5em}

In this paper, we refine this conjecture and prove it for $k = 2$.

\vspace{0.5em}

For a partition $\lambda = (\lambda_1, \ldots)$ and $k \ge \lambda_1$ define the operation 
$$\rho_k : \lambda \mapsto (k, \lambda) = (k, \lambda_1, \ldots),$$ 
which adds the first part $k$ to $\lambda$ (similarly, the inverse operation $\rho_k^{-1}$ removes the first part $k$).
We also apply this operation on tuples of partitions componentwise, i.e. $\rho_k (\lambda^{(1)}, \lambda^{(2)}, \ldots ) = (\rho_k \lambda^{(1)}, \rho_k \lambda^{(2)}, \ldots )$. 

\vspace{0.5em}

We propose the following more general conjecture on unimodality of Kronecker coefficients. 
\begin{conjecture}\label{conj:main}
	Let $d \ge 3$ be odd, $k$ be even and $\pmb{\lambda} = (\lambda^{(1)},\ldots,\lambda^{(d)})$ be a $d$-tuple of partitions of $m \le k^{d}/2$ each with parts at most $k$. Let $a = \max_i (\lambda^{(i)})'_1$ and $b = \min_i (\lambda^{(i)})'_k$. 
	Then the sequence 
		\begin{align}\label{seq1}
		\{ g(\rho_k^n\, \pmb{\lambda} )\}_{n=-b,\ldots,k^{d-1}-a} 
		\end{align}
		is unimodal. 
\end{conjecture}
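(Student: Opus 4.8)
\emph{Proof strategy (sketch).} I would realise the sequence~\eqref{seq1} as the Hilbert function of a graded multiplicity module carrying a canonical degree-$k$ ``hyperdeterminant'' multiplier, and deduce unimodality from a hard Lefschetz property for that multiplier. After the normalisation $\pmb{\lambda}\rightsquigarrow\rho_k^{-b}\pmb{\lambda}$ (legitimate since each $\lambda^{(i)}$ has at least $b$ parts equal to $k$) and a reordering so that $\ell(\lambda^{(1)})=a$, one may assume $b=0$, so the range becomes $n=0,\ldots,c$ with $c:=k^{d-1}-a$, all hypotheses persisting. Let $R:=\mathrm{Sym}^{\bullet}\bigl(\mathbb{C}^{k^{d-1}}\otimes(\mathbb{C}^{k})^{\otimes(d-1)}\bigr)$ with its $G:=GL_{k^{d-1}}\times GL_{k}^{\times(d-1)}$-action, so that the $G$-isotypic multiplicities of $R$ in each degree are generalized Kronecker coefficients (Schur--Weyl duality). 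Since $d$ is odd one may conjugate the last $d-1$ partitions without changing the Kronecker value, and $(\rho_k^n\lambda^{(i)})'=(\lambda^{(i)})'+(n^k)$ for $i\ge 2$; thus, writing
\begin{equation*}
W_n:=S_{\rho_k^n\lambda^{(1)}}(\mathbb{C}^{k^{d-1}})\otimes\bigotimes_{i=2}^{d}\bigl(S_{(\lambda^{(i)})'}(\mathbb{C}^k)\otimes(\det\nolimits_k)^{n}\bigr),\qquad M_n:=\mathrm{Hom}_{G}\bigl(W_n,\,R_{m+nk}\bigr),
\end{equation*}
one has $\dim M_n=g(\rho_k^n\pmb{\lambda})$ for all $n$, with $M_n=0$ for $n\notin[0,c]$. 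Because $g((k),(1^k),\ldots,(1^k))=1$ when $d$ is odd, $R_k$ contains a unique copy of $V_\delta:=S_{(k)}(\mathbb{C}^{k^{d-1}})\otimes\bigotimes_{i=2}^{d}\det\nolimits_k$, whose highest weight vector $\delta$ is the natural degree-$k$ Cayley/hyperdeterminant-type covariant; by Pieri's rule ($\rho_k^{n+1}\lambda^{(1)}$ arises from $\rho_k^{n}\lambda^{(1)}$ by a horizontal $k$-strip, multiplicity one) together with $(\det\nolimits_k)^{n+1}=\det\nolimits_k\otimes(\det\nolimits_k)^{n}$, multiplication by this copy of $V_\delta$ induces a linear map $L:M_n\to M_{n+1}$.

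The statement now reduces to a Lefschetz property for $\delta$ on $M_{\bullet}:=\bigoplus_{n=0}^{c}M_n$: it suffices to prove that $L^{j}:M_n\to M_{n+j}$ has maximal rank for all $n,j$ (a strong/hard Lefschetz property for $\delta$) together with the symmetry $g(\rho_k^n\pmb{\lambda})=g(\rho_k^{c-n}\pmb{\lambda})$; more economically it suffices to equip $M_{\bullet}$ with an $\mathfrak{sl}_2$-action whose raising operator is $L$ — then $M_{\bullet}$ is automatically a symmetric, unimodal $\mathfrak{sl}_2$-module and one reads off the symmetry and the unimodality at once. The hypotheses $d$ odd, $k$ even and $m\le k^d/2$ are exactly the conditions under which this Lefschetz structure is expected to be available; $k$ even is already necessary for the weaker Conjecture~\ref{conj:ay}, and the symmetry $g(\rho_k^n\pmb{\lambda})=g(\rho_k^{c-n}\pmb{\lambda})$ (refining the rectangular case of~\cite{ay22} and approachable via box-complementation identities for Kronecker coefficients) is in any case a consequence of the $\mathfrak{sl}_2$-structure.

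The main obstacle is the Lefschetz property itself. The natural candidate for the $\mathfrak{sl}_2$ takes the lowering operator $L^{\vee}:M_{n+1}\to M_n$ to be contraction against the dual covariant $\delta^{\vee}$ (the copy of $V_\delta^{*}$ among the degree-$k$ constant-coefficient differential operators on $R$), and requires that $[L,L^{\vee}]$ act on $M_n$ by a scalar affine-linear in $n$ — a ``higher Capelli identity'' for $\delta$. For $k=2$ the multiplier $\delta$ is quadratic and this is within reach of classical harmonic-analysis / Howe-duality arguments, which is what makes the two-column case tractable. For even $k\ge 4$, $\delta$ has degree $\ge 3$ and the naive pair $(L,L^{\vee})$ no longer closes into $\mathfrak{sl}_2$ (already $x^{j}$ and $\partial^{j}$ fail to span $\mathfrak{sl}_2$ for $j\ge 3$); I expect a general proof to require either a corrected triple supported on $M_{\bullet}$, a descent in $k$ via the identification $(\mathbb{C}^{2k'})^{\otimes d}\cong(\mathbb{C}^{2}\otimes\mathbb{C}^{k'})^{\otimes d}$, an induction on the odd parameter $d$ lifting the Lefschetz property through a Kronecker recursion, or a Hodge-theoretic realisation of $M_{\bullet}$ as a hard-Lefschetz module of a projective variety with $\delta$ a hyperplane class. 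This is the genuinely open core of the conjecture.

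Two weaker facts are accessible independently of this core and serve as both evidence and ingredients. Monotonicity near the endpoints — $g(\rho_k^n\pmb{\lambda})\le g(\rho_k^{n+1}\pmb{\lambda})$ for small $n$, and dually for $n$ near $c$ — amounts to injectivity of $L$ in low degrees, which can be attacked by an explicit semistandard-tableau-type injection or by verifying that $\delta$ is a nonzerodivisor on the relevant graded pieces. And the non-vanishing of $g(\rho_k^n\pmb{\lambda})$ in the middle of the range is a higher-dimensional Alon--Tarsi-type statement, a necessary consequence of the conjecture whose unconditional proof would already pin down the location of the peak.
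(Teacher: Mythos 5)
The statement you are addressing is a conjecture; the paper does not prove it in general either, but only (i) reduces it to the Lefschetz property $\mathrm{LP}_{\pmb{\lambda},k}$ for multiplication by the Cayley form $\omega_{d,k}$ on the highest weight spaces $\mathrm{HWV}_{(\rho_k^n\pmb{\lambda})'}\bigwedge V$ (Proposition~\ref{proposition:main-uni}), and (ii) establishes that property for $k=2$ via an $\mathfrak{sl}(2)$-triple $(X,Y,H)$ built from $\omega_{d,2}$ and $\omega_{d,2}^{*}$ (Section~\ref{sec:sl2}). Your sketch is essentially this same strategy transported to a dual model: your graded multiplicity module $M_\bullet$ inside $\mathrm{Sym}^\bullet(\mathbb{C}^{k^{d-1}}\otimes(\mathbb{C}^k)^{\otimes(d-1)})$ and your degree-$k$ covariant $\delta$ correspond (after conjugating the $d-1$ even-many arguments) to the paper's $\mathrm{HWV}\bigwedge^\bullet(\mathbb{C}^k)^{\otimes d}$ and to $\omega_{d,k}$. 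You also correctly identify the open core and the reason the naive $\mathfrak{sl}_2$ fails for even $k\ge 4$; the paper confirms this with an explicit $k=4$, $d=3$ computation of $[X,Y]$, and moreover shows (Theorem~\ref{th:neg}) that the unrestricted Lefschetz property on all of $\bigwedge V$ genuinely fails for $k>2$, so any workable structure must be confined to the highest weight (equivalently, multiplicity) spaces, as in your $M_\bullet$.

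One concrete error: you assert the symmetry $g(\rho_k^n\pmb{\lambda})=g(\rho_k^{c-n}\pmb{\lambda})$ as holding in general and as a consequence of a hoped-for $\mathfrak{sl}_2$-structure on $M_\bullet$. This symmetry is false for general $\pmb{\lambda}$: the paper's Example~\ref{ex42}(b), $\pmb{\lambda}=(32,221,41)$, gives a unimodal but non-symmetric sequence. The Hodge-star identity $g(\pmb{\lambda})=g((k^{d-1}\times k)^d-\pmb{\lambda})$ (Lemma~\ref{lemma:hwvhodge}) yields symmetry of the sequence only under the $k$-complementarity condition. Consequently, no $\mathfrak{sl}_2$-structure with $M_n$ as weight spaces can exist for non-complementary $\pmb{\lambda}$, and the correct sufficient condition is the asymmetric one the paper formulates ($L$ injective up to some $n_0$, surjective afterwards) or your ``all powers of $L$ have maximal rank''; your claim that the full hard-Lefschetz/$\mathfrak{sl}_2$ package is available whenever $d$ is odd, $k$ even and $m\le k^d/2$ should be restricted accordingly. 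This does not affect the $k=2$ case, where every two-column tuple is $2$-complementary.
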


Note that $g(\rho_k^{n}\, \pmb{\lambda} ) = 0$ for $n \not\in [-b, k^{d-1} - a]$ by trivial reasons.
	Moreover, we show (see Lemma~\ref{lemma:hwvhodge}) that the sequence \eqref{seq1} is symmetric if $m$ is divisible by $k/2$ and 
	each $\lambda = \lambda^{(i)}$ satisfies the complementarity condition 
    $\rho_k^{k^{d-1} - 2m/k}\, \lambda = k^{d-1} \times k - \lambda = (k - \lambda_{k^{d-1}}, \ldots, k - \lambda_1)$, 
    we call such partitions (and their tuples) 
	{\it $k$-complementary}. 
\begin{remark}
This statement essentially splits the set of $d$-tuples of partitions with parts at most $k$ into disjoint sequences, so that the corresponding Kronecker coefficients form unimodal and sometimes symmetric sequences. 
\end{remark}

\begin{remark}
We verified this conjecture computationally on many examples, including all sequences with $d = 3, k = 4$ and $m \le 8$.
We give some examples of such unimodal sequences in Example~\ref{ex42}. Computations show that similar unimodality of Kronecker coefficients frequently hold in the case of odd $k$ as well, although there is an example when it does {\it not} hold: $\{g_3(n, 3)\}_{n = 0, 1, 2, \ldots} = \{1,0,1,\ldots\}$. 
This is the only case ruining unimodality for odd $k$ that we are aware of (from our computations). 
\end{remark}

\begin{remark}
The unimodality can be compared as a specific `vertical' counterpart of the well-known semigroup property for Kronecker coefficients \cite{chm}.
\end{remark}

We shall prove Conjecture~\ref{conj:main} (and hence Conjecture~\ref{conj:ay}) for $k = 2$.
\begin{theorem}
\label{th:main}
Conjecture~\ref{conj:main} holds for $k = 2$.
\end{theorem}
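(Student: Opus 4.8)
The plan is to interpret the Kronecker coefficients $g(\rho_2^n \pmb\lambda)$ for $k=2$ as dimensions of highest weight spaces in a suitable tensor product, and to establish unimodality via a hard Lefschetz property on these spaces. Since $k=2$, each partition $\lambda^{(i)}$ has parts in $\{0,1,2\}$, so it is determined by the pair $(p_i,q_i)$ where $q_i = (\lambda^{(i)})'_2$ is the number of $2$'s and $p_i + q_i = (\lambda^{(i)})'_1$ is the number of nonzero parts. Applying $\rho_2^n$ for $n>0$ simply prepends $n$ parts equal to $2$; for $n<0$ it removes parts equal to $2$. Concretely I expect $g(\rho_2^n\pmb\lambda)$ to equal the multiplicity of some $\mathrm{GL}$-irreducible (equivalently, a highest weight space dimension) inside $\bigotimes_i S^{?}(\mathbb{C}^2 \otimes \cdots)$-type space, where increasing $n$ corresponds to tensoring with one more copy of a fixed module. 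The first step is therefore to set up this dictionary carefully and identify the graded vector space $V = \bigoplus_n V_n$ with $\dim V_n = g(\rho_2^n\pmb\lambda)$, using the $k=2$ structure (likely via skew Howe duality $\mathrm{GL}_2 \times \mathrm{GL}_N$, or the classical description of plethysm/Kronecker for two-column shapes).

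**Next I would** exhibit a hard Lefschetz operator: a linear map $L : V_n \to V_{n+1}$ (multiplication by a suitably chosen $\mathrm{GL}_2$-invariant element, coming from the extra prepended column $(2)$, i.e. the fundamental invariant of $\wedge^2\mathbb C^2$) such that $L^j : V_{c - j} \to V_{c + j}$ is an isomorphism for all $j$, where $c$ is the center of symmetry $c = (k^{d-1} - a - b)/2 = (2^{d-1} - a - b)/2$. This is where the hypotheses "$d$ odd, $k$ even" enter: oddness of $d$ makes $2^{d-1}$ have the right parity so the center is a half-integer lattice point matching the symmetry from Lemma~\ref{lemma:hwvhodge}, and $k=2$ being even is what makes the construction of $L$ (via the single $\mathrm{SL}_2$ invariant) available. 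Hard Lefschetz immediately gives both symmetry of the rank sequence and unimodality: $\dim V_{c-j} = \dim V_{c+j}$ and the injectivity of $L$ on the left half, surjectivity on the right half, force $\dim V_{c-j-1} \le \dim V_{c-j}$ and $\dim V_{c+j} \ge \dim V_{c+j+1}$.

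**The main obstacle** I anticipate is proving the hard Lefschetz property itself, i.e.\ that the power map $L^j$ is an isomorphism rather than merely a map between spaces of (eventually) equal dimension. The standard route is to realize $V$ as (a piece of) the cohomology of a smooth projective variety, or as an $\mathfrak{sl}_2$-module where $L$ is the raising operator and one invokes the representation theory of $\mathfrak{sl}_2$ — the latter requires producing a compatible lowering operator $\Lambda$ and a semisimple $H$ with $[H,L]=2L$, $[H,\Lambda]=-2\Lambda$, $[L,\Lambda]=H$. For $k=2$ the relevant $\mathrm{SL}_2$ should be the one dual to the two columns (Howe dual pair), so that $L$, $\Lambda$, $H$ literally come from the $\mathfrak{sl}_2$-action, and hard Lefschetz is then automatic from finite-dimensionality of the $\mathrm{SL}_2$-representation. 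I would check carefully that the grading by $n$ is exactly the weight grading (shifted), that the $\mathrm{SL}_2$-action commutes with taking $S_m$-invariants / the relevant highest weight functor, and that the weight-space dimensions are finite — the two-column restriction is precisely what keeps everything inside a single $\mathrm{SL}_2$ and finite-dimensional.

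**Finally**, I would verify the edge bookkeeping: that the index range $n \in [-b, 2^{d-1}-a]$ matches the support of the $\mathfrak{sl}_2$-module (vanishing outside is already noted to hold for trivial reasons), that $a = \max_i (\lambda^{(i)})'_1$ and $b = \min_i (\lambda^{(i)})'_2$ are exactly the endpoints dictated by how many columns $(2)$ can be added or removed, and that the symmetry center computed from the $\mathfrak{sl}_2$ highest weight agrees with $(2^{d-1}-a-b)/2$. Once the hard Lefschetz / $\mathfrak{sl}_2$-module structure is in place, unimodality of \eqref{seq1} is a formal consequence, completing the proof of Theorem~\ref{th:main} and hence of Conjecture~\ref{conj:ay}.
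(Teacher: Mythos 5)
Your proposal follows essentially the same route as the paper: realize $g(\rho_2^n\,\pmb{\lambda})$ as $\dim\mathrm{HWV}$ of weight spaces in $\bigwedge V$ for $V=(\mathbb{C}^2)^{\otimes d}$, take the raising operator to be wedging with the unique invariant $\omega_{d,2}\in\bigwedge^2 V$, pair it with the contraction as lowering operator and a degree-counting $H$ to get an $\mathfrak{sl}(2)$-triple preserving the highest weight subspaces, and deduce hard Lefschetz (hence symmetry and unimodality) from finite-dimensional $\mathfrak{sl}(2)$ representation theory. The steps you flag as needing verification (the commutation relations, which the paper checks via the Bourbaki-style basis $e_{A,B,C}$, and the invariance of the HWV subspace under $X,Y,H$) are exactly the ones the paper carries out.
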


Note that all two-column partitions are $2$-complementary.
In particular, taking $\lambda^{(i)} = \varnothing$ we have $\rho_2^n\, \varnothing = n \times 2 = (2^n)$ and the theorem says that the sequence $\{ g_d(n, 2) \}_{n = 0, \ldots, 2^{d - 1}}$ is unimodal (and symmetric). 

\vspace{0.5em}

We prove Theorem~\ref{th:main} 
by showing the {\it hard Lefschetz property} on highest weight spaces corresponding to Kronecker coefficients.

\subsection{Lefschetz properties (LP)} 
Let $V = (\mathbb{C}^{k})^{\otimes d}$ which is a $k^d$-dimensional space of tensors. 
It is known (see \cite{imw17, bi13}) that for a $d$-tuple $\pmb{\lambda}$ of partitions of $m$ with parts at most $k$ we have
\begin{align*}
	g(\pmb{\lambda}) = \dim \mathrm{HWV}_{\pmb{\lambda'}} \bigwedge^{m} V
\end{align*}
is the dimension of the highest weight vector space of weight $\pmb{\lambda'}$
w.r.t. the induced action of the group $\mathrm{GL}(k)^{\times d}$. 

Define the element $\omega = \omega_{d,k} \in \bigwedge^k V$ which we call the {\it Cayley form}\footnote{As we show in \cite{ay22}, $\omega$ can be viewed as dual to {\it Cayley's first hyperdeterminant} \cite{cay, cay2}} as follows: 
$$
\omega := \sum_{\pi_1,\ldots,\pi_d \in S_k} \sgn(\pi_1\cdots\pi_d) \bigwedge_{i=1}^k e_{\pi_1(i)}\otimes \ldots \otimes e_{\pi_d(i)},
$$
which is a unique highest weight vector of weight $(1 \times k)^d$ (moreover, it is also $\mathrm{SL}(k)^{\times d}$-invariant) \cite{ay22}.

It is natural to approach Conjecture~\ref{conj:main} by establishing a corresponding Lefschetz property. 
Consider the following Lefschetz properties 
depending on $\pmb{\lambda}$ and $k$. (The notation here as in Conjecture~\ref{conj:main}.)
\begin{itemize}
	\item[($\mathrm{LP}_{\pmb{\lambda},k}$)] {\it For some $n_0 \in [-b, k^{d-1}-a]$ the 
		map
		$$
			L: 
			\mathrm{HWV}_{(\rho_k^n\, \pmb{\lambda})'} \bigwedge V \longrightarrow  
			\mathrm{HWV}_{(\rho_k^{n+1}\, \pmb{\lambda})'} \bigwedge V, \qquad 
			L : v \longmapsto \omega \wedge v,
		$$
		is injective for each $n \in [-b, n_0)$ and is surjective for each $n \in [n_0, k^{d-1}-a)$.
		}
\end{itemize} 

\vspace{0.5em}

\begin{itemize}
	\item[($\mathrm{HLP}_{\pmb{\lambda},k}$)]{\it For each $n \in [-b, (k^{d-1} - a + b)/2]$ 
	the map
	$$
		L^{k^{d-1}-a-2n}: \mathrm{HWV}_{(\rho_k^n\pmb{\lambda})'}\bigwedge V \longrightarrow  \mathrm{HWV}_{(\rho_k^{
            k^{d-1} - 2m/k - 2n
            } \pmb{\lambda}
        )'} \bigwedge V, \qquad
		L : v \longmapsto \omega \wedge v,
	$$
	is an isomorphism.
	}
\end{itemize}
\vspace{0.5em}
Let us note that the hard Lefschetz property $\mathrm{HLP}_{\pmb{\lambda}, k}$ only makes sense for $k$-complementary $\pmb{\lambda}$. 
We are interested for which $d, k$ and $\pmb{\lambda}$ the properties $\mathrm{LP}_{\pmb{\lambda},k}$ and $\mathrm{HLP}_{\pmb{\lambda},k}$ hold. It is clear that $\mathrm{HLP}_{\pmb{\lambda}, k}$ implies $\mathrm{LP}_{\pmb{\lambda}, k}$.  

In \textsection\ref{sec:sl2} we prove that the properties $\mathrm{LP}_{\pmb{\lambda},2}$ and $\mathrm{HLP}_{\pmb{\lambda},2}$ hold for all odd $d \ge 3$ and corresponding $\pmb{\lambda}$ (which are all $2$-complementary).
We conjecture that the properties $\mathrm{LP}_{\pmb{\lambda},k}$ and $\mathrm{HLP}_{\pmb{\lambda},k}$ hold for all odd $d \ge 3$ and even $k$.
We show that these properties have two important implications.

\subsubsection{LP implies  Kronecker unimodality}
\begin{proposition}\label{proposition:main-uni}
	If $\mathrm{LP}_{\pmb{\lambda},k}$ holds then Conjecture~\ref{conj:main} holds for $\pmb{\lambda}$. 
\end{proposition}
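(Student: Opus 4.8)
The plan is to deduce the unimodality of the sequence $\{g(\rho_k^n\,\pmb{\lambda})\}_{n=-b,\ldots,k^{d-1}-a}$ purely from the statement of $\mathrm{LP}_{\pmb{\lambda},k}$, by reading off dimension inequalities from the injectivity and surjectivity of the multiplication maps $L:\mathrm{HWV}_{(\rho_k^n\pmb{\lambda})'}\bigwedge V\to\mathrm{HWV}_{(\rho_k^{n+1}\pmb{\lambda})'}\bigwedge V$. First I would recall, using the identity $g(\pmb{\lambda})=\dim\mathrm{HWV}_{\pmb{\lambda}'}\bigwedge^m V$ quoted in the excerpt, that the $n$-th term of the sequence equals $\dim\mathrm{HWV}_{(\rho_k^n\pmb{\lambda})'}\bigwedge V$; thus it suffices to show that this sequence of dimensions is unimodal.

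Next I would invoke $\mathrm{LP}_{\pmb{\lambda},k}$ to get $n_0\in[-b,k^{d-1}-a]$ such that $L$ is injective for every $n\in[-b,n_0)$ and surjective for every $n\in[n_0,k^{d-1}-a)$. Injectivity of $L:\mathrm{HWV}_{(\rho_k^n\pmb{\lambda})'}\to\mathrm{HWV}_{(\rho_k^{n+1}\pmb{\lambda})'}$ forces $\dim\mathrm{HWV}_{(\rho_k^n\pmb{\lambda})'}\le\dim\mathrm{HWV}_{(\rho_k^{n+1}\pmb{\lambda})'}$, i.e. $g(\rho_k^n\pmb{\lambda})\le g(\rho_k^{n+1}\pmb{\lambda})$ for all $n<n_0$; surjectivity forces the reverse inequality $g(\rho_k^n\pmb{\lambda})\ge g(\rho_k^{n+1}\pmb{\lambda})$ for all $n\ge n_0$. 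Stringing these together gives
\[
g(\rho_k^{-b}\pmb{\lambda})\le\cdots\le g(\rho_k^{n_0}\pmb{\lambda})\ge\cdots\ge g(\rho_k^{k^{d-1}-a}\pmb{\lambda}),
\]
which is exactly the assertion that the sequence \eqref{seq1} is unimodal (with peak at $n=n_0$). I should also remark that the endpoint conventions are consistent: the maps are only asserted on the index range where both source and target are legitimate, and outside $[-b,k^{d-1}-a]$ all the coefficients vanish, as noted after Conjecture~\ref{conj:main}, so no boundary term can spoil the chain of inequalities.

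There is essentially no serious obstacle here: the proposition is a formal consequence of the definition of $\mathrm{LP}_{\pmb{\lambda},k}$ together with the linear-algebraic fact that an injective (resp. surjective) linear map between finite-dimensional spaces gives an inequality (resp. reverse inequality) of dimensions. The only point requiring a little care is bookkeeping with the index ranges and making sure the "turn" at $n_0$ is handled correctly when $n_0=-b$ or $n_0=k^{d-1}-a$ (in which case the sequence is weakly monotone and trivially unimodal). The genuine mathematical content — establishing $\mathrm{LP}_{\pmb{\lambda},k}$ itself, e.g. via the hard Lefschetz property for the $\mathrm{SL}(2)$-action built from the Cayley form $\omega$ — lies entirely outside this proposition and is carried out separately in \textsection\ref{sec:sl2} for $k=2$.
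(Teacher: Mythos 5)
Your proposal is correct and follows essentially the same route as the paper: identify each term of the sequence with $\dim\mathrm{HWV}_{(\rho_k^n\pmb{\lambda})'}\bigwedge V$ via \eqref{eq:krondim}, then read off the dimension inequalities from the injectivity/surjectivity pattern guaranteed by $\mathrm{LP}_{\pmb{\lambda},k}$. The only (inessential) difference is that the paper normalizes to $b=0$ and explicitly cites Lemma~\ref{lemma:hwvalg} to confirm that $v\mapsto\omega\wedge v$ indeed lands in the next highest weight space, a point you absorb into the statement of $\mathrm{LP}_{\pmb{\lambda},k}$ itself.
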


\subsubsection{HLP implies $d$-dimensional Alon--Tarsi conjecture}
Let us now explain the second implication of the Lefschetz property. 
There are {\it $d$-dimensional Alon--Tarsi numbers} $\mathrm{AT}_d(k) \in \mathbb{Z}$ introduced by B\"urgisser and Ikenmeyer \cite{bi} (for $d = 3$) and studied in  \cite{ay22} (for general $d$), which are equal to the difference of the number of `positive' and `negative' {\it Latin hypercubes}, see  \textsection\ref{subsection:magicsets} for definitions. B\"urgisser and Ikenmeyer \cite{bi} showed that $\mathrm{AT}_3(k)$ is an evaluation of certain fundamental invariant at unit tensor and they raised the problem whether $\mathrm{AT}_3(k) \neq 0$ holds for even $k$ (having positive computational evidence for $k = 2, 4$); see also \cite{lzx} for related work. The celebrated Alon--Tarsi conjecture on Latin squares \cite{at} states that $\mathrm{AT}_2(k) \neq 0$ for even $k$ (see also \cite{hr,dri1,glynn}).
The statement $\mathrm{AT}_d(k) \neq 0$ for even $k$ can be viewed as a $d$-dimensional Alon--Tarsi conjecture. We show that the above Lefschetz property implies this conjecture. 

\begin{proposition}
\label{proposition:main-at}
	Let $d \ge 3$ be odd and $k$ be even. If $\mathrm{HLP}_{\varnothing,k}$ 
	holds then $\mathrm{AT}_d(k) \neq 0$.
\end{proposition}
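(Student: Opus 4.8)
The plan is to use $\mathrm{HLP}_{\varnothing,k}$ only at its extreme left endpoint $n=0$, where it collapses into the single assertion that a power of the Cayley form is nonzero, and then to identify that nonvanishing with $\mathrm{AT}_{d}(k)\neq 0$ via the expansion of $\omega^{\wedge k^{d-1}}$ in the basis of the top exterior power, as set up in \textsection\ref{subsection:magicsets}.

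First I would unpack $\mathrm{HLP}_{\varnothing,k}$ at $n=0$. For $\pmb{\lambda}=\varnothing$ we have $m=0$ and $a=b=0$, so the admissible range $[-b,(k^{d-1}-a+b)/2]=[0,k^{d-1}/2]$ contains $n=0$; since $\rho_{k}^{0}\varnothing=\varnothing$, the source is $\mathrm{HWV}_{\varnothing}\bigwedge^{0}V=\mathbb{C}$, and the map is $L^{k^{d-1}-a-2n}=L^{k^{d-1}}\colon 1\longmapsto\omega^{\wedge k^{d-1}}$, which lands in $\bigwedge^{k\cdot k^{d-1}}V=\bigwedge^{k^{d}}V$, the top exterior power of $V$. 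Since $\dim\bigwedge^{k^{d}}V=1$, with basis the volume form $\Omega$ (the wedge of all $k^{d}$ basis tensors $e_{i_{1}}\otimes\cdots\otimes e_{i_{d}}$ in a fixed order), the injectivity asserted by $\mathrm{HLP}_{\varnothing,k}$ at $n=0$ is exactly the statement that $\omega^{\wedge k^{d-1}}$ is a nonzero scalar multiple of $\Omega$.

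Second I would compute that scalar. Write $\omega=\sum_{\vec{\pi}}\sgn(\pi_{1}\cdots\pi_{d})\,D_{\vec{\pi}}$, where $D_{\vec{\pi}}=\bigwedge_{i=1}^{k}e_{\pi_{1}(i)}\otimes\cdots\otimes e_{\pi_{d}(i)}$ is a decomposable $k$-vector whose support $\{(\pi_{1}(i),\dots,\pi_{d}(i)):i\in[k]\}$ is a ``diagonal'' of $[k]^{d}$ (a set of $k$ cells projecting bijectively onto $[k]$ in each of the $d$ axis directions). Expanding $\omega^{\wedge k^{d-1}}$, a product $D_{\vec{\pi}^{(1)}}\wedge\cdots\wedge D_{\vec{\pi}^{(k^{d-1})}}$ survives only when the $k^{d-1}$ diagonals are pairwise disjoint, hence partition $[k]^{d}$; these partitions are precisely the Latin hypercubes counted (with signs) by $\mathrm{AT}_{d}(k)$. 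Each such Latin hypercube $H$ arises from exactly $(k^{d-1})!\,(k!)^{k^{d-1}}$ ordered tuples $(\vec{\pi}^{(l)})$, and I would check that each of them contributes the same multiple $\sgn(H)\,\Omega$ of $\Omega$: reordering the $k^{d-1}$ blocks multiplies the wedge by $\sgn(\tau)^{k}=1$ because $k$ is \emph{even}, while reparametrising a single diagonal by $g\in S_{k}$ (that is, replacing $\vec{\pi}$ by $(\pi_{1}g,\dots,\pi_{d}g)$) multiplies the summand by $\sgn(g)^{d+1}=1$ because $d$ is \emph{odd}. Hence the coefficient of $\Omega$ in $\omega^{\wedge k^{d-1}}$ equals $(k^{d-1})!\,(k!)^{k^{d-1}}\,\mathrm{AT}_{d}(k)$, a positive-integer multiple of $\mathrm{AT}_{d}(k)$; combined with the first step this gives $\mathrm{AT}_{d}(k)\neq 0$.

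The Lefschetz input at $n=0$ is essentially cost-free; the substantive point — all of which lives in \textsection\ref{subsection:magicsets} — is the sign bookkeeping in the expansion of $\omega^{\wedge k^{d-1}}$, namely verifying that its volume-form coefficient is a nonzero integer times $\sum_{H}\sgn(H)=\mathrm{AT}_{d}(k)$. This is where the parity hypotheses on $d$ and $k$ are actually used (dropping either forces the coefficient to vanish identically), and it is the step I would handle most carefully.
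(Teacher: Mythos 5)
Your proposal is correct and follows essentially the same route as the paper: apply $\mathrm{HLP}_{\varnothing,k}$ at the bottom of the range to conclude $\omega^{\wedge k^{d-1}}\neq 0$ in the one-dimensional top exterior power, then identify its volume-form coefficient with a nonzero multiple of $\mathrm{AT}_d(k)$. The only difference is that the paper simply cites Theorem~\ref{th:omega}(iii) for the expansion $\omega^{k^{d-1}}=\pm\,\mathrm{AT}_d(k)\,e_{[k]^d}$, whereas you re-derive it by the (correct) sign bookkeeping over diagonals; the overall combinatorial constant you obtain is immaterial for the conclusion.
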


\subsubsection{More general LP does not hold}\label{introlp}


To indicate significance of the highest weight spaces for the defined Lefschetz properties, let us consider the following more general Lefschetz property $\mathrm{LP}_{d, k}$ depending on $d, k$. 
\begin{itemize}
	\item[($\mathrm{LP}_{d,k}$)] {\it  For each $n \in [0,k^{d}-k]$ the 
		map
		$$
			L: 
			\bigwedge^n V \longrightarrow  
			\bigwedge^{n + k} V, \qquad 
			L : v \longmapsto \omega \wedge v,
		$$
		has full rank. 
		}
\end{itemize}
While we prove that the property $\mathrm{LP}_{d, k}$ holds for $k = 2$, we also show that for $k > 2$ it does {\it not} hold, see \textsection\ref{sec:neg} for details. 


\begin{theorem}
	Let $d \ge 3$ be odd and $k > 2$. Then the property $\mathrm{LP}_{d, k}$ does not hold.
\end{theorem}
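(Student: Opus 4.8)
The plan is to produce, for given odd $d\ge 3$ and $k>2$, a single degree $n\in[0,k^d-k]$ at which the map $L\colon\bigwedge^n V\to\bigwedge^{n+k}V$, $v\mapsto\omega\wedge v$, fails to have full rank. Since $\omega$ is $\mathrm{SL}(k)^{\times d}$-invariant, $L$ is $\mathrm{SL}(k)^{\times d}$-equivariant and raises the $\mathrm{GL}(k)^{\times d}$-weight by $(1\times k)^d$; hence it carries the $\pmb\mu$-isotypic component of $\bigwedge^n V$ into the $(\pmb\mu+(1\times k)^d)$-isotypic component of $\bigwedge^{n+k}V$, and $L$ has full rank on $\bigwedge^n V$ if and only if each of the induced multiplicity-space maps $\mathrm{HWV}_{(\rho_k^{n'}\pmb\nu)'}\bigwedge V\to\mathrm{HWV}_{(\rho_k^{\,n'+1}\pmb\nu)'}\bigwedge V$, $v\mapsto\omega\wedge v$, has full rank — these have source and target of dimensions $g(\rho_k^{n'}\pmb\nu)$ and $g(\rho_k^{\,n'+1}\pmb\nu)$. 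It is therefore enough to exhibit one such $\omega$-multiplication between highest weight spaces that is rank-deficient.

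For odd $k$ this is immediate. The form $\omega$ has odd exterior degree $k$, so $\omega\wedge\omega=0$, while $\omega\ne0$; thus $\omega$ lies in the kernel of $L\colon\bigwedge^k V\to\bigwedge^{2k}V$. Since $k^{d-1}\ge 4$ we have $2k\le k^d/2$ and therefore $\binom{k^d}{k}\le\binom{k^d}{2k}$, and $n=k$ lies in $[0,k^d-k]$; so full rank of $L$ at $n=k$ would force $L$ to be injective — a contradiction. (Equivalently, on the chain with $\pmb\nu=\varnothing$ the first $\omega$-multiplication $\mathbb C\,\omega\to\mathrm{HWV}_{(2\times k)^d}\bigwedge V$ is the zero map.)

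For even $k$ the square $\omega\wedge\omega$ need not vanish, and the difficulty is genuine: if $\mathrm{LP}_{\pmb\lambda,k}$ held for \emph{every} tuple $\pmb\lambda$ with parts at most $k$ then $\mathrm{LP}_{d,k}$ would follow, because along each chain every step is injective (below the crossover $n_0$) or surjective (above it), and an injective, resp.\ surjective, map has source of dimension at most, resp.\ at least, that of its target, hence is of full rank. So the obstruction must come from a highest weight chain whose canonical representative $\pmb\lambda$ — the one to which $\rho_k^{-1}$ cannot be applied, i.e.\ such that some component of $\pmb\lambda$ has no part equal to $k$ — has $|\pmb\lambda|>k^d/2$, placing the chain outside the scope of Conjecture~\ref{conj:main}. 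Concretely I would take $\pmb\nu$ with each of its $d$ components a tall hook $(\ell,1^{N-1})$, $1\le\ell\le k-1$, with $N$ slightly above $k^d/2$, and analyze the chain $g(\rho_k^{\,j}\pmb\nu)=g\big((k^j,\ell,1^{N-1}),\dots,(k^j,\ell,1^{N-1})\big)$, aiming to locate a consecutive pair incompatible with a full-rank $\omega$-multiplication — e.g.\ two consecutive values both equal to $1$ together with the vanishing of the corresponding $\omega$-product, or a step forced by degree to be injective but on which $\omega$-multiplication has a nontrivial kernel.

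The main obstacle is precisely this last point — certifying the rank-deficient step on the uncovered chain. I expect to settle the smallest even case $k=4$, $d=3$ by a direct computation with $\omega_{3,4}\in\bigwedge^4\big((\mathbb C^4)^{\otimes 3}\big)$, pinning down the first degree $n$ at which $\omega\wedge-\colon\bigwedge^n V\to\bigwedge^{n+4}V$ is not injective, and then to propagate the conclusion to all even $k>2$ and all odd $d\ge 3$ via (i) the semigroup and monotonicity properties of Kronecker coefficients to reduce general odd $d$ to $d=3$, and (ii) a stability argument in $N$ and $k$ carrying the single bad step along.
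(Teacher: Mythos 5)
Your argument is complete only for odd $k$: there the observation $\omega\wedge\omega=0$, $\omega\neq 0$, together with $\binom{k^d}{k}\le\binom{k^d}{2k}$, does kill $\mathrm{LP}_{d,k}$ at $n=k$ (the paper makes the same observation, in a remark, to rule out $\mathrm{LP}_{\varnothing,k}$ for odd $k$). For even $k>2$ you have only a plan — a family of hook-shaped candidates, a hoped-for computation at $k=4,d=3$, and an unspecified propagation argument — and you say yourself that certifying the rank-deficient step is the open obstacle. That is a genuine gap, since the theorem covers all $k>2$. The paper's proof is uniform and much simpler: take $v=\bigwedge_{\mathbf{i}\in[k]^{d-1}}e_{1\mathbf{i}}$, the basis vector of the full first slice in direction $1$. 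Every monomial of $\omega$ contains some $e_{\pi(j)}$ whose first coordinate is $1$, and all such basis vectors already occur in $v$, so $\omega\wedge v=0$. Since $k^{d-1}<(k^d-k)/2$ exactly when $k>2$, the degree $n=k^{d-1}$ lies in the range where full rank forces injectivity, and $\mathrm{LP}_{d,k}$ fails. (For $k=2$ the slice sits at the middle degree, which is why the construction does not contradict $\mathrm{LP}_{d,2}$.)

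A separate flaw in your setup: the claimed equivalence ``$L$ has full rank on $\bigwedge^nV$ if and only if each induced multiplicity-space map has full rank'' is false in the ``if'' direction — one component can be injective but not surjective while another is surjective but not injective, and the global map is then neither. The paper's counterexample is exactly of this kind: the slice vector $v$ spans a one-dimensional highest weight space with $g(\pmb{\lambda})=1$ and $g(\rho_k\pmb{\lambda})=0$, so the component map (onto the zero space) is trivially of full rank, yet it destroys injectivity of the global map. The correct reduction is: for $n\le(k^d-k)/2$, $\mathrm{LP}_{d,k}$ fails at $n$ as soon as some component map fails to be \emph{injective}; your chain-by-chain search guided by the ``full rank on each chain'' criterion would miss precisely the component that the paper exploits.
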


\subsubsection{Stable injectivity of the Lefschetz map}
 We prove that the Lefschetz map $L : v \mapsto \omega \wedge v$ satisfies the following stable injectivity.
\begin{theorem}
    Let $d \ge 3$ be odd and $V_n := (\mathbb{C}^n)^{\otimes d}$.
    For $\pmb{\lambda} \subseteq (k^{d-1} \times k)^d$, if the map
    $$L_k: \mathrm{HWV}_{(\pmb{\lambda})'}\bigwedge V_k \longrightarrow \mathrm{HWV}_{(\rho_k \pmb{\lambda})'}\bigwedge V_k, \qquad L_k : v \longmapsto \omega_{d,k} \wedge v, $$ 
    is injective, then for each $\ell > k$, the map
    $$L_\ell: \mathrm{HWV}_{(\pmb{\lambda})'}\bigwedge V_\ell \longrightarrow \mathrm{HWV}_{(\rho_\ell \pmb{\lambda})'}\bigwedge V_\ell, \qquad L_{\ell} : v \longmapsto \omega_{d,\ell} \wedge v, $$
    is also injective.
\end{theorem}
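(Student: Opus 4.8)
The plan is to bootstrap injectivity of $L_\ell$ from that of $L_k$ using a weight argument together with a multigrading on the exterior algebra. I fix the standard inclusion $\mathbb{C}^k \hookrightarrow \mathbb{C}^\ell$ onto $\langle e_1,\dots,e_\ell\rangle$'s first $k$ coordinates, so that $V_k \subseteq V_\ell$, $\bigwedge V_k \subseteq \bigwedge V_\ell$, and $\mathrm{GL}(k)^{\times d}$ sits inside $\mathrm{GL}(\ell)^{\times d}$ as the block subgroup, whose raising operators form a subset of those of $\mathrm{GL}(\ell)^{\times d}$. First I would note that since each $\lambda^{(i)}$ has parts at most $k$, every component of the weight $\pmb{\lambda}'$ has length at most $k$; hence any weight vector of weight $\pmb{\lambda}'$ in $\bigwedge V_\ell$ is a combination of wedge monomials whose tensor factors all lie among $e_1,\dots,e_k$, i.e.\ it lies in $\bigwedge V_k$. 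Since an element of $\bigwedge V_k$ is automatically killed by the extra raising operators of $\mathrm{GL}(\ell)^{\times d}$ (those $E_{ij}^{(t)}$ with $j>k$), this yields $\mathrm{HWV}_{\pmb{\lambda}'}\bigwedge V_\ell = \mathrm{HWV}_{\pmb{\lambda}'}\bigwedge V_k$ as subspaces of $\bigwedge V_\ell$. So it is enough to show: if $v \in \bigwedge V_k$ and $\omega_{d,\ell}\wedge v = 0$, then $\omega_{d,k}\wedge v = 0$ — for then $v \in \ker L_k = 0$ by hypothesis.

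To this end, write $\mathbb{C}^\ell = \mathbb{C}^k \oplus U$ with $U = \langle e_{k+1},\dots,e_\ell\rangle$ and $r := \ell - k$. Then $V_\ell = \bigoplus_{S\subseteq[d]} V_S$, where $V_S$ has $i$-th tensor factor $U$ for $i \in S$ and $\mathbb{C}^k$ otherwise, and this induces a grading of $\bigwedge V_\ell$ by tuples $\mathbf a = (a_S)_{S\subseteq[d]}$. The vector $v$ lies in the single summand $\bigwedge V_\emptyset = \bigwedge V_k$, so the products of $v$ with distinct graded components $\omega^{(\mathbf a)}$ of $\omega_{d,\ell}$ land in distinct graded components of $\bigwedge V_\ell$; hence $\omega_{d,\ell}\wedge v = 0$ forces $\omega^{(\mathbf a)}\wedge v = 0$ for every $\mathbf a$. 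I would then focus on the component $\omega^{(\mathbf a_0)}$ with $a_\emptyset = k$, $a_{[d]} = r$, and all other $a_S = 0$, and prove the identity $\omega^{(\mathbf a_0)} = \binom{\ell}{r}\,\omega_{d,k}\wedge\omega'_{d,r}$, where $\omega_{d,k}\in\bigwedge^k(\mathbb{C}^k)^{\otimes d} = \bigwedge^k V_\emptyset$ is the Cayley form and $\omega'_{d,r}\in\bigwedge^r U^{\otimes d} = \bigwedge^r V_{[d]}$ is the Cayley form attached to $U\cong\mathbb{C}^r$ (in particular nonzero, again using $d$ odd). The point is that in $\omega_{d,\ell} = \sum_{\pi_1,\dots,\pi_d\in S_\ell}\sgn(\pi_1\cdots\pi_d)\bigwedge_{i=1}^\ell e_{\pi_1(i)}\otimes\cdots\otimes e_{\pi_d(i)}$, a term contributes to $\omega^{(\mathbf a_0)}$ precisely when the sets $\pi_t^{-1}(\{k+1,\dots,\ell\})$ are all equal to a common $r$-subset $I_0$; for fixed $I_0$ with complement $L := [\ell]\setminus I_0$, each $\pi_t$ is a free pair of bijections $L\to[k]$ and $I_0\to\{k+1,\dots,\ell\}$, and, writing $N(I_0)$ for the number of pairs $a\in I_0$, $b\in L$ with $a<b$, the inversions of $\pi_t$ split into those within $L$, those within $I_0$, and exactly $N(I_0)$ mixed ones, while rearranging the wedge to collect the $L$-factors before the $I_0$-factors also costs the sign $(-1)^{N(I_0)}$; so the sub-sum over a fixed $I_0$ equals $(-1)^{(d+1)N(I_0)}\,\omega_{d,k}\wedge\omega'_{d,r}$. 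As $d$ is odd, $(d+1)N(I_0)$ is even, so every one of the $\binom{\ell}{r}$ subsets $I_0$ contributes $+\,\omega_{d,k}\wedge\omega'_{d,r}$.

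Finally, $\omega^{(\mathbf a_0)}\wedge v = 0$ together with the identity gives $(\omega_{d,k}\wedge v)\wedge\omega'_{d,r} = 0$ in $\bigwedge V_\ell$. Since $\omega_{d,k}\wedge v\in\bigwedge V_\emptyset$, $\omega'_{d,r}\in\bigwedge V_{[d]}$, and $V_\emptyset\cap V_{[d]} = 0$, we have $\bigwedge(V_\emptyset\oplus V_{[d]}) = \bigwedge V_\emptyset\otimes\bigwedge V_{[d]}$ inside $\bigwedge V_\ell$, and wedging by the nonzero element $\omega'_{d,r}$ is injective on $\bigwedge V_\emptyset$. Hence $\omega_{d,k}\wedge v = 0$, i.e.\ $v\in\ker L_k$, so $v = 0$, and $L_\ell$ is injective. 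The main obstacle is the sign bookkeeping in the identity for $\omega^{(\mathbf a_0)}$ — this is exactly where oddness of $d$ enters, via $(-1)^{(d+1)N(I_0)}=1$ — while everything else is soft once the decomposition $V_\ell = \bigoplus_S V_S$ is in place.
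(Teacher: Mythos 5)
Your proof is correct, and its overall strategy matches the paper's: first observe that $\pmb{\lambda} \subseteq (k^{d-1}\times k)^d$ forces every weight vector of weight $\pmb{\lambda}'$ in $\bigwedge V_\ell$ to lie already in $\bigwedge V_k$ (so $L_\ell$-injectivity reduces to showing $\omega_{d,k}\wedge v \neq 0 \Rightarrow \omega_{d,\ell}\wedge v \neq 0$ for $v \in \bigwedge V_k$), and then isolate from $\omega_{d,\ell}\wedge v$ a piece of the form $\omega_{d,k}\wedge(\text{Cayley form on the complementary subcube})\wedge v$. The isolation mechanism is where you diverge. The paper decomposes $\omega_{d,\ell}$ over all $d$-tuples $I$ of $k$-subsets with $I_1=[k]$ into terms $\pm\,\omega_I\wedge\omega_{\overline I}$ and shows the resulting summands of $\omega_{d,\ell}\wedge v$ have pairwise disjoint supports in the monomial basis (the tuple $I$ is recoverable from any support set), so the single nonvanishing term $I=[k]^d$ cannot be cancelled and no sign analysis is required. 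You instead use the coarser multigrading from $\mathbb{C}^\ell=\mathbb{C}^k\oplus U$, project onto the one graded component with $a_\varnothing=k$, $a_{[d]}=r$, and must then evaluate that component exactly; this is where your key lemma $\omega^{(\mathbf a_0)}=\binom{\ell}{r}\,\omega_{d,k}\wedge\omega'_{d,r}$ and the cancellation $(-1)^{(d+1)N(I_0)}=1$ (hence the oddness of $d$) enter, since within a single graded component the contributions of the different $r$-subsets $I_0$ all produce the same vector and could in principle cancel. So you trade the paper's combinatorial disjointness argument for an explicit sign computation; both are sound, your reduction step is justified in more detail than the paper's, and your closing step (wedging with the nonzero $\omega'_{d,r}\in\bigwedge V_{[d]}$ is injective on $\bigwedge V_\varnothing$ because $\bigwedge(V_\varnothing\oplus V_{[d]})\cong\bigwedge V_\varnothing\otimes\bigwedge V_{[d]}$) is a clean substitute for the paper's final appeal to linear independence.
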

In particular, we have the following corollary.
\begin{corollary}
    Let $d \ge 3$ be odd and $\pmb{\lambda}$ be a $d$-tuple of partitions of $m \le 2^{d-1}$ with parts at most $2$. Then for each $\ell > 2$ the map $L:  \mathrm{HWV}_{(\pmb{\lambda})'}\bigwedge V_{\ell} \to \mathrm{HWV}_{(\rho_\ell \pmb{\lambda})'}\bigwedge V_{\ell}$ is injective.
    In particular, $g(\pmb{\lambda}) \le g(\rho_\ell\, \pmb{\lambda})$.
\end{corollary}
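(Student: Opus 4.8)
The plan is to obtain this corollary from the preceding theorem on stable injectivity of the Lefschetz map together with the hard Lefschetz property established in \textsection\ref{sec:sl2} in the course of proving Theorem~\ref{th:main}.

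The first step is routine: since every part of every $\lambda^{(i)}$ is at most $2$ and the number of rows of $\lambda^{(i)}$ is at most $|\lambda^{(i)}| = m \le 2^{d-1}$, we have $\pmb{\lambda} \subseteq (2^{d-1}\times 2)^d$, so the stable injectivity theorem is applicable with $k=2$. What has to be checked is its hypothesis, that the base map
$$L_2 : \mathrm{HWV}_{(\pmb{\lambda})'}\bigwedge V_2 \longrightarrow \mathrm{HWV}_{(\rho_2\pmb{\lambda})'}\bigwedge V_2, \qquad v\longmapsto \omega_{d,2}\wedge v,$$
is injective. For this I would use that all two-column partitions are $2$-complementary, so $\mathrm{HLP}_{\pmb{\lambda},2}$ holds. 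Taking $n=0$ in $\mathrm{HLP}_{\pmb{\lambda},2}$ gives that
$$L^{\,2^{d-1}-a} : \mathrm{HWV}_{(\pmb{\lambda})'}\bigwedge V_2 \longrightarrow \mathrm{HWV}_{(\rho_2^{\,2^{d-1}-m}\pmb{\lambda})'}\bigwedge V_2$$
is an isomorphism; since it factors as $L^{\,2^{d-1}-a-1}\circ L_2$, the map $L_2$ is injective whenever $2^{d-1}-a\ge 1$. Feeding this into the stable injectivity theorem with $k=2$ then yields that $v\mapsto\omega_{d,\ell}\wedge v$ is injective on $\mathrm{HWV}_{(\pmb{\lambda})'}\bigwedge V_\ell$ for every $\ell>2$, and injectivity forces $g(\pmb{\lambda}) = \dim\mathrm{HWV}_{(\pmb{\lambda})'}\bigwedge V_\ell \le \dim\mathrm{HWV}_{(\rho_\ell\pmb{\lambda})'}\bigwedge V_\ell = g(\rho_\ell\pmb{\lambda})$.

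The step I expect to be the main obstacle is the degenerate boundary case $2^{d-1}-a=0$, i.e.\ $m=2^{d-1}$ with some $\lambda^{(i)}$ equal to the single column $(1^{2^{d-1}})$. Then the sequence \eqref{seq1} collapses to the single term $g(\pmb{\lambda})$, $\mathrm{HLP}_{\pmb{\lambda},2}$ asserts only the trivial equality $L^{0}=\mathrm{id}$, and $L_2$ itself need not be injective (its target can vanish while $g(\pmb{\lambda})\neq 0$), so the stable injectivity theorem does not apply verbatim. Here I would argue directly: the weight forces $\mathrm{HWV}_{(\pmb{\lambda})'}\bigwedge V_\ell$ to lie inside $\bigwedge^{2^{d-1}}$ of the span of those basis tensors whose $i$-th tensor factor is $e_1$, and a direct computation shows that wedging a nonzero such vector with $\omega_{d,\ell}$ --- whose monomials, once $\ell>2$, involve all $\ell$ values in the $i$-th factor and hence are not all swallowed by $v$ --- produces a nonzero element of $\bigwedge^{2^{d-1}+\ell} V_\ell$, by exhibiting a surviving wedge monomial. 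This keeps $L_\ell$ injective for $\ell>2$, and the inequality $g(\pmb{\lambda})\le g(\rho_\ell\pmb{\lambda})$ follows as before.
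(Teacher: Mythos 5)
Your overall strategy --- reduce to injectivity of the base map $L_2$ on $\mathrm{HWV}_{(\pmb{\lambda})'}\bigwedge V_2$ and then invoke Theorem~\ref{theorem:injectivity} --- is exactly the paper's route (the paper cites $\mathrm{LP}_{d,2}$ rather than $\mathrm{HLP}_{\pmb{\lambda},2}$ for the base case, but for $m<2^{d-1}$ these give the same conclusion). The problem is where you locate the exceptional case. The hard Lefschetz isomorphism out of $U_m=\mathrm{HWV}_{(\pmb{\lambda})'}\bigwedge V_2$ is $L^{2^{d-1}-m}\colon U_m\to U_{2^d-m}$ (Corollary~\ref{cor:hlp} with $\ell=2^{d-1}-m$); the exponent is governed by the size $m$, not by $a=\max_i\ell(\lambda^{(i)})$. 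So your factorization argument proves $L_2$ injective exactly when $m<2^{d-1}$, not when $a<2^{d-1}$. The case that actually breaks is $m=2^{d-1}$ with $a<2^{d-1}$: then $U_m$ sits at the middle degree $2^{d-1}$ of the $\mathfrak{sl}(2)$-grading, $L_2$ is only surjective, and its kernel has dimension $g(\pmb{\lambda})-g(\rho_2\,\pmb{\lambda})$, which can be positive --- e.g.\ for $d=5$ and $\pmb{\lambda}=((2^8),\ldots,(2^8))$ one has $g(\pmb{\lambda})=166>130=g(\rho_2\,\pmb{\lambda})$. In that situation the hypothesis of Theorem~\ref{theorem:injectivity} genuinely fails, so no care in applying it can recover the corollary; one would instead have to argue directly with the decomposition in the proof of Lemma~\ref{lemma:injectivity}, showing some term $\omega_I\wedge\omega_{\overline I}\wedge v$ with $I\neq[2]^d$ survives. (To be fair, the paper's own one-line derivation glosses over this same boundary case.)

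Your patch for what you call the degenerate case treats only $a=2^{d-1}$, i.e.\ some $\lambda^{(i)}=(1^{2^{d-1}})$, which is a strictly smaller set of exceptions than the one above, and the patch itself is not a proof: exhibiting a single permutation tuple $\pi$ whose monomial $\bigwedge_i e_{\pi(i)}$ survives the wedge with $v$ does not show $\omega_{d,\ell}\wedge v\neq 0$, because many tuples $\pi$ contribute to the same basis vector $e_T$ with signs and can cancel --- this is precisely the cancellation measured by the Alon--Tarsi numbers in Theorem~\ref{th:omega}(iii), and the proof of Theorem~\ref{th:neg} shows that for this very type of $v$ the analogous wedge can vanish identically. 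So both the location of the gap and its repair need to be redone.
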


\subsection{Kronecker positivity}
 The (hard) Lefschetz property implies unimodality (and hence positivity) of the symmetric sequence $\{g_d(n,k) \}_{n = 0, \ldots, k^{d - 1}}$ for all odd $d \ge 3$ and even $k$. 
We studied this sequence and conjectured its unimodality in \cite{ay22}, and in particular, we proved the following results on their positivity. 

\begin{theorem}[Cor.~8.7, Thm.~8.13 \cite{ay22}]\label{th:posbefore}
	Let $d \ge 3$ be odd and $k$ be even. 
	Then $g_d(n,k) > 0$ for all $n \le \sqrt{k}/2 - 1$.
	Moreover, if $\mathrm{AT}_d(k) \neq 0$ then $g_d(n,k) > 0$ for all $n \le k^{d-1}$. 
\end{theorem}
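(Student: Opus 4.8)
The plan is to prove both assertions through the highest weight description of the Kronecker coefficients and the single explicit element $\omega^{\wedge n}$. Applying $g(\pmb{\lambda})=\dim\mathrm{HWV}_{\pmb{\lambda}'}\bigwedge^{m}V$ to $\pmb{\lambda}=(n\times k,\ldots,n\times k)$ (so $m=nk$) gives
\[
g_d(n,k)=\dim\mathrm{HWV}_{(n\times k,\ldots,n\times k)'}\,\bigwedge^{nk}V.
\]
In each of the $d$ tensor factors the weight of $\omega^{\wedge n}$ is $(n,\ldots,n)$ with $k$ entries, i.e.\ the conjugate of the rectangle $n\times k$. Since $\omega$ is $\mathrm{SL}(k)^{\times d}$-invariant, so is $\omega^{\wedge n}$; hence $\mathbb{C}\,\omega^{\wedge n}$ is a one-dimensional $\mathrm{GL}(k)^{\times d}$-submodule of $\bigwedge^{nk}V$, and $\omega^{\wedge n}$ is a highest weight vector of the required weight. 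Therefore $\omega^{\wedge n}\neq 0$ forces $g_d(n,k)\ge 1$, and the theorem reduces to establishing $\omega^{\wedge n}\neq 0$ (equivalently, injectivity of the Lefschetz map $L:v\mapsto\omega\wedge v$ on the relevant highest weight spaces, since $\omega^{\wedge n}=L^n(1)$) in the two stated ranges of $n$.

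For the half conditioned on $\mathrm{AT}_d(k)\neq 0$, look at the top power $\omega^{\wedge k^{d-1}}\in\bigwedge^{k\cdot k^{d-1}}V=\bigwedge^{k^d}V$, a one-dimensional space spanned by the wedge of all $k^d$ standard basis tensors of $V$. Expanding $\omega^{\wedge k^{d-1}}$ from the defining sum for $\omega$, a term survives only when the $k^d$ resulting basis tensors are pairwise distinct, i.e.\ when the $k^{d-1}$ threads $\{(\pi^{(j)}_1(i),\ldots,\pi^{(j)}_d(i)):i\in[k]\}$ partition $[k]^d$ — that is, form a $d$-dimensional Latin hypercube — in which case its sign is $\prod_j\sgn(\pi^{(j)}_1\cdots\pi^{(j)}_d)$, the sign of that hypercube. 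Collecting each hypercube with the (nonzero, factorial-type) multiplicity with which it is produced gives $\omega^{\wedge k^{d-1}}=c\cdot\mathrm{AT}_d(k)\cdot(\text{generator})$ with $c\neq 0$; this is the evaluation-at-unit-tensor identity of \cite{bi,ay22} that also underlies Proposition~\ref{proposition:main-at}. Thus $\mathrm{AT}_d(k)\neq 0$ forces $\omega^{\wedge k^{d-1}}\neq 0$, and since $\omega^{\wedge n}\wedge\omega^{\wedge(k^{d-1}-n)}=\omega^{\wedge k^{d-1}}$ we obtain $\omega^{\wedge n}\neq 0$ for all $0\le n\le k^{d-1}$ (outside this range $g_d(n,k)=0$ anyway).

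For the unconditional half we must produce $\omega^{\wedge n}\neq 0$ for small $n$ directly. Using $d$ odd one rewrites $\omega=k!\sum_L c_L\,e_L$, the sum running over ``permutation lines'' $L=\{(\pi_1(i),\ldots,\pi_d(i)):i\in[k]\}\subseteq[k]^d$, with $c_L\in\{\pm1\}$ and $e_L$ the wedge of the basis tensors of $L$ in a fixed reference order; then, using that $k$ is even so the length-$k$ blocks $e_L$ commute, for any set $S$ of $nk$ basis tensors the coefficient of $e_S$ in $\omega^{\wedge n}$ equals $(k!)^n\,n!$ times a $\pm1$-weighted count of the partitions of $S$ into $n$ disjoint permutation lines. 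It therefore suffices to exhibit one explicit $S$ for which this signed count is nonzero — ideally an $S$ admitting a \emph{unique} such partition, so that the coefficient is $\pm(k!)^n n!\neq 0$. I would take $S=\bigcup_{j=0}^{n-1}L_j$ with $L_j=\{(i,\ldots,i,\,i+j\bmod k):i\in\mathbb{Z}_k\}$, or a similarly rigid configuration, and argue that for $n$ small relative to $k$ no competing decomposition exists. The main obstacle is precisely this rigidity estimate: a permutation line inside $S$ amounts to a permutation $i\mapsto i+j_i$ of $\mathbb{Z}_k$ with all displacements $j_i\in\{0,\ldots,n-1\}$, and one must rule out tiling $S$ by $n$ such permutations except in the obvious way — it is the scarcity of low-displacement permutations of $\mathbb{Z}_k$ that confines $n$ to be of order $\sqrt k$, and sharpening the bound to $n\le\sqrt k/2-1$ is where the real combinatorial work lies.
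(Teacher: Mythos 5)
First, note that Theorem~\ref{th:posbefore} is quoted from \cite{ay22} and is not reproved in this paper, so there is no internal proof to match your argument against; what follows is an assessment of your attempt on its own terms and against the machinery the paper does set up (Theorem~\ref{th:omega} and its corollary).

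Your reduction of positivity to $\omega^{\wedge n}\neq 0$ is exactly the paper's mechanism, and your proof of the \emph{conditional} half is correct and is the same route as Theorem~\ref{th:omega}(iii) together with its corollary: $\omega^{\wedge k^{d-1}}=\pm\,\mathrm{AT}_d(k)\,e_{[k]^d}$, so $\mathrm{AT}_d(k)\neq 0$ forces all powers $\omega^{\wedge n}$, $n\le k^{d-1}$, to be nonzero. (Your ``factorial-type multiplicity'' remark is where one must check that the $k!$ tuples producing the same permutation line contribute with a common sign; this uses $d$ odd, and is the content of the cited expansion, so it is fine to invoke it.)

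The \emph{unconditional} half, however, is not proved. Your strategy --- exhibit a magic set $S$ of magnitude $n$ with $\mathrm{AT}(S)\neq 0$, ideally via a unique decomposition into permutation lines --- is reasonable, but the step you defer (``the rigidity estimate'') is the entire theorem. Concretely, for your proposed $S=\bigcup_{j=0}^{n-1}L_j$ with $L_j=\{(i,\ldots,i,i+j \bmod k)\}$, a permutation line inside $S$ is a permutation $\sigma$ of $\mathbb{Z}_k$ with all displacements $\sigma(i)-i$ in $\{0,\ldots,n-1\}$ and with integer displacement sum a multiple of $k$; for $n=2$ only the two constant shifts qualify and the decomposition is indeed unique, but already for $n\ge 3$ non-constant such permutations exist (e.g.\ for $k=6$, $n=4$ the displacement vector $(0,2,0,2,0,2)$ gives a valid line), so uniqueness fails and you would have to control a \emph{signed} count of decompositions, which you do not do. Nor does your sketch produce the specific threshold $n\le\sqrt{k}/2-1$; the heuristic ``scarcity of low-displacement permutations'' is not an argument, and the shape of the bound suggests the actual proof in \cite{ay22} rests on a different counting. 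So the conditional assertion is established, but the unconditional one has a genuine gap at its only nontrivial step.
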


In this paper we obtain positivity of rectangular Kronecker coefficients for another range.
\begin{theorem}\label{th:posafter}
	Let $d \ge 3$ be odd and $k$ be even. Then $g_d(n, k) > 0$ for all $n \le 2^{d-1}$. 
\end{theorem}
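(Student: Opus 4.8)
The plan is to reduce to the case $k=2$, which is settled by Theorem~\ref{th:main}, and then bootstrap to all even $k$ via the semigroup property of Kronecker coefficients.

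First I would note that Theorem~\ref{th:main} already yields positivity of the \emph{entire} sequence $\{g_d(n,2)\}_{n=0,\ldots,2^{d-1}}$. Specializing Conjecture~\ref{conj:main} to $\lambda^{(1)}=\cdots=\lambda^{(d)}=\varnothing$ (these are $2$-complementary), the sequence $\{g_d(n,2)\}_{n=0,\ldots,2^{d-1}}$ is unimodal by Theorem~\ref{th:main} and symmetric, as recorded right after Theorem~\ref{th:main} and following from Lemma~\ref{lemma:hwvhodge}. Since $g_d(0,2)=g(\varnothing,\ldots,\varnothing)=1$, symmetry gives $g_d(2^{d-1},2)=1$ as well, and a unimodal sequence both of whose endpoints equal $1$ is everywhere $\ge 1$. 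Hence $g_d(n,2)\ge 1$ for all $0\le n\le 2^{d-1}$.

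Next I would pass to general even $k$. Because $k$ is even, the $n\times k$ rectangle is the sum of $k/2$ copies of the $n\times 2$ rectangle, which componentwise reads
$$
(n\times k,\ldots,n\times k)\;=\;\sum_{i=1}^{k/2}\,(n\times 2,\ldots,n\times 2).
$$
By the semigroup property of (generalized) Kronecker coefficients \cite{chm} --- the set of tuples of partitions with nonzero $g$ is closed under componentwise addition --- and since each summand $(n\times 2,\ldots,n\times 2)$ has $g_d(n,2)>0$ for $n\le 2^{d-1}$ by the previous paragraph, we conclude $g_d(n,k)=g(n\times k,\ldots,n\times k)>0$ for all $n\le 2^{d-1}$.

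I do not expect a genuine obstacle here. The two places needing a little care are: (i) the passage from unimodality to positivity above, which really does use the symmetry together with the boundary value $g_d(0,2)=1$ --- without symmetry a unimodal sequence could vanish at its far end; and (ii) invoking the semigroup property for $d$-fold rather than three-fold Kronecker coefficients, which is the same fact already cited in the paper and extends verbatim. Note that the hypothesis ``$k$ even'' is exactly what makes the decomposition above possible, consistent with the known failure of unimodality and positivity for odd $k$ (e.g.\ $g_3(\,\cdot\,,3)$).
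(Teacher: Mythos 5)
Your proposal is correct, but it takes a genuinely different route from the paper's. The paper also reduces to the two-column case, but its lifting mechanism stays inside the exterior algebra: using the direct sum operation on magic sets and the multiplicativity $\mathrm{AT}(T_1\oplus T_2)=\mathrm{AT}(T_1)\cdot\mathrm{AT}(T_2)$, the authors show that the coefficient of $\omega_{d,k}^{2^{d-1}}$ at $e_T$ for $T=([2]^d)^{\oplus k/2}$ equals $\pm\,\mathrm{AT}_d(2)^{k/2}\neq 0$; hence $\omega^n\neq 0$ is itself a nonzero highest weight vector of weight $(k\times n)^d$ for every $n\le 2^{d-1}$, giving positivity. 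That argument needs only $\mathrm{AT}_d(2)\neq 0$ from the $k=2$ theory (not the full positivity of all $g_d(n,2)$), and it yields the explicit nonvanishing $\omega^{2^{d-1}}\neq 0$ that the paper reuses in the subsequent corollaries (e.g.\ $\omega^k\neq 0$ and the invariants $\Delta_T$). Your route instead extracts positivity of the whole sequence $\{g_d(n,2)\}$ from Theorem~\ref{th:main} (unimodality plus the endpoint values $1$ --- this step is sound) and then invokes the semigroup property to pass from $n\times 2$ to $n\times k=\sum_{i=1}^{k/2}(n\times 2)$. That is legitimate, but note it genuinely imports an external black box: the semigroup property cannot be derived from Lemma~\ref{lemma:hwvalg} in this paper, since a wedge product of two highest weight vectors in $\bigwedge V$ may vanish; its proof lives in the symmetric algebra, where the ring of highest weight vectors is an integral domain, and that argument does extend verbatim to $d$-fold Kronecker coefficients as you note. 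In short: your proof is shorter given Theorem~\ref{th:main} and \cite{chm}, while the paper's is self-contained at the tensor level and produces the stronger statement $\omega^{2^{d-1}}\neq 0$.
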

This theorem gives a better range than the unconditional part of Theorem~\ref{th:posbefore} for large $d$.

\section{Preliminaries}
We denote $[k] := \{ 1, \ldots, k\}$ and $\{e_{i}\}_{i=1}^{k}$ is the standard basis of $\mathbb{C}^{k}$.
\subsection{Partitions} A {\it partition} is a sequence $\lambda = (\lambda_{1}, \ldots, \lambda_{\ell})$ of positive integers $\lambda_1 \ge \cdots \ge \lambda_{\ell}$, where $\ell(\lambda) = \ell$ is its {\it length}. The {\it size} of $\lambda$ is $\lambda_1 + \ldots + \lambda_{\ell}$. Every partition $\lambda$ can be represented as the {\it Young diagram} $\{(i,j)  : i \in [1, \ell], j\in [1,\lambda_{i}]\}$. We denote by $\lambda'$ the {\it conjugate} partition of $\lambda$ whose diagram is transposed. For another partition $\mu = (\mu_1,\ldots,\mu_p)$ we also define partition $\lambda + \mu := (\lambda_1 + \mu_1,\ldots,\lambda_q + \mu_q)$, where $q = \max(\ell, p)$; the sequence $-\lambda = (-\lambda_1,\ldots,-\lambda_\ell)$; and the sequence $\lambda - \mu := \lambda + \mathrm{rev}(-\mu)$, where $\mathrm{rev}(\mu) = (\mu_p,\ldots,\mu_1)$. We extend all operations to $d$-tuples of partitions (sequences) coordinate-wise. 

\subsection{Highest weight vectors} 
Let $V = (\mathbb{C}^{k})^{\otimes d}$ be $k^{d}$-dimensional tensor vector space with the natural (multilinear) action of the group $G=\mathrm{GL}(k)^{\times d}$, which induces the action on the 
anti-symmetric space $\bigwedge^{m} V$. A vector $v \in  \bigwedge^{m} V$ 
is a {\it weight vector} if it is rescaled by the action of diagonal matrices, i.e. 
$$
	(\mathrm{diag}(a^{(1)}_{1},\ldots,a^{(1)}_{k}),\ldots, \mathrm{diag}(a^{(d)}_{1},\ldots,a^{(d)}_{k})) \cdot v = (a^{(1)})^{\lambda^{(1)}}\ldots (a^{(d)})^{\lambda^{(d)}} v,
$$ 
where $x^{\alpha} = x_{1}^{\alpha_{1}}\ldots x_{k}^{\alpha_{k}}$,
so that $(\lambda^{(1)},\ldots,\lambda^{(d)})$ is the {\it weight} of $v$ (here $\lambda^{(i)}$ are not necessarily partitions).

The {\it highest weight vectors} are nonzero weight vectors that are invariant under the group of unipotent matrices $U(k)^{\times d} \subset \mathrm{GL}(k)^{\times d}$.
It is known (see \cite{imw17, bi13}) that for a $d$-tuple $\pmb{\lambda}$ of partitions of $m$ with parts not exceeding $k$ we have
\begin{align}\label{eq:krondim}
	g(\pmb{\lambda}) = \dim \mathrm{HWV}_{\pmb{\lambda'}} \bigwedge^{m} V
\end{align}
is the dimension of the highest weight vector space of weight $\pmb{\lambda'}$
w.r.t. the induced action of $G$. 

We also use an explicit weight space decomposition from \cite{imw17}.
Let the elements of $[k]^{d}$ be ordered lexicographically.  For $X \subseteq [k]^{d}$
denote by $s_{\ell}(X, i)$ the number of elements in $X$ whose $\ell$-th coordinate is $i$, i.e. the number of points of $i$-th {\it slice} in $\ell$-th {\it direction}, and let $s_{\ell}(X) := (s_{\ell}(X,1), \ldots, s_{\ell}(X, k))$ be the vectors of {\it marginals}. 
For $X = \{(x^{(1)}_{i},\ldots,x^{(d)}_{i}) : 1 \le i \le m \}$ ordered lexicograhically we associate the vector
$$
	e_{X} := \bigwedge_{i=1}^{m} e_{x^{(1)}_{i}} \otimes\ldots\otimes e_{x^{(d)}_{i}}.
$$
The vectors $\{e_{X}\}$ over all possible $X \subseteq [k]^{d}$ of cardinality $m$ form a basis of $\bigwedge^{m} V$.
Moreover, $e_{X}$ is a weight vector of weight $(s_1(X), \ldots, s_{d}(X))$ for the action of $G$. 

\subsection{Magic sets and Latin hypercubes}\label{subsection:magicsets}
The \textit{$i$-th slice} of the cube $[k]^d$ in {\it direction} $\ell \in [d]$ is the subset of $[k]^d$ of the elements with $\ell$-th coordinate equal to $i$. We describe definitions from \cite{ay22}.

A \textit{magic set of magnitude} $n$ in $[k]^d$ is a subset $T$ of $[k]^d$ with an equal number of elements in each slice, i.e. for any slice $L$ of $[k]^d$ we have $|L \cap T| = n$. Let $B_{d,k}(n)$ be the set of magic sets of magnitude $n$ in $[k]^d$. 

Magic sets in $[k]^d$ of magnitude $n$ (or total cardinality $nk$) index the basis of $(k \times n)^d$ weight space in $\bigwedge^{nk} V$, i.e. 
	$$
	  \mathrm{WV}_{(k \times n)^d}\bigwedge^{nk} V = \mathrm{span}\{ e_T \mid T \in B_{d,k}(n) \}
	$$
where $T$ is read in lexicographical order.

A {\it partial $d$-dimensional} \textit{Latin hypercube} of type $T$ and {\it magnitude} $n$ is a map $C: T \to [n]$ that is bijective when restricted to any slice of $[k]^d$. 
For a slice $S$ of $[k]^d$, the \textit{$S$-permutation} of $C$ is a permutation of $[n]$ formed by values of the map $C$ on the slice $S$ read in lexicographical order. The set of all partial Latin hypercubes of type $T$ is denoted as $\mathcal{C}_d(T)$; and for $T = [k]^d$ we  denote it $\mathcal{C}_d(k)$.
The \textit{sign} of $C$, denoted by $\sgn(C)$, is the product of signs of $S$-permutations of $C$ over all slices $S$ of the cube $[k]^d$. 
For $T \in B_{d,k}(n)$ the {\it $d$-dimensional Alon-Tarsi number} $\mathrm{AT}(T)$ is defined as the sum of signs of partial Latin hypercubes as follows:
$$
	\mathrm{AT}(T) := \sum_{C \in \mathcal{C}_d(T)} \sgn(C),
$$
and we write $\mathrm{AT}_d(k) := \mathrm{AT}_d([k]^d)$. In other words, $\mathrm{AT}(T)$ is the difference of the number of `positive' and `negative' Latin hypercubes of fixed type $T$.
It is not difficult to show that $\mathrm{AT}(T) = 0$ for odd $k$.

\subsection{Cayley form} Recall the \textit{Cayley form} $\omega =\omega_{d,k} \in \bigwedge^{k} V$ defined as follows: 
$$
\omega = \sum_{\pi_1,\ldots,\pi_d \in S_k} \sgn(\pi_1\cdots\pi_d) \bigwedge_{i=1}^k e_{\pi_1(i)}\otimes \ldots \otimes e_{\pi_d(i)}.
$$
We denote $\omega^{n} := \omega \wedge \cdots \wedge \omega$ ($n$ times). 
In \cite{ay22} we proved the following properties of this vector.
\begin{theorem}[\cite{ay22}]\label{th:omega}
	Let $d \ge 3$ be odd. The following properties hold:
	\begin{itemize}
			\item[(i)] $\omega \in \mathrm{HWV}_{(k\times 1)^d} \bigwedge^{k} V$ is a unique (up to a scale) highest weight vector of such weight;
			\item[(ii)] for odd $k$ we have $\omega^2 = 0$;
			\item[(iii)] for each $n = 1,\ldots,k^{d-1}$ we have the expansion
			$$
				\omega^n = \sum_{T \in B_{d,k}(n)} \pm \mathrm{AT}(T)\, e_T.
			$$
			In particular, $\omega^{k^{d-1}} = \pm \mathrm{AT}_d(k)\, e_{[k]^d}$.
	\end{itemize}
\end{theorem}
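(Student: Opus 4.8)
The plan is to establish the three parts in turn. For (i) I would first realize $\omega$ functorially: antisymmetrization gives an embedding $\bigwedge^{k}\mathbb{C}^{k}\hookrightarrow(\mathbb{C}^{k})^{\otimes k}$, hence $(\bigwedge^{k}\mathbb{C}^{k})^{\otimes d}\hookrightarrow(\mathbb{C}^{k})^{\otimes kd}$; regrouping these $kd$ factors from $d$ blocks of $k$ (the $\ell$-th block being the expansion of the $\ell$-th exterior factor) into $k$ blocks of $d$ (the $i$-th new block collecting the $i$-th factor of each old block), so that every new block is a copy of $V=(\mathbb{C}^{k})^{\otimes d}$, and then applying the antisymmetrization surjection $V^{\otimes k}\twoheadrightarrow\bigwedge^{k}V$, yields a $\mathrm{GL}(k)^{\times d}$-equivariant map $(\bigwedge^{k}\mathbb{C}^{k})^{\otimes d}\to\bigwedge^{k}V$ which one checks carries $(e_{1}\wedge\cdots\wedge e_{k})^{\otimes d}$ to $\omega$. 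As $\bigwedge^{k}\mathbb{C}^{k}$ is the determinant representation, $(e_{1}\wedge\cdots\wedge e_{k})^{\otimes d}$ is $\mathrm{SL}(k)^{\times d}$-invariant of $\mathrm{GL}(k)^{\times d}$-weight $(k\times1)^{d}$; hence so is $\omega$, and an $\mathrm{SL}(k)^{\times d}$-invariant weight vector is annihilated by the unipotent radical $U(k)^{\times d}$ and is therefore a highest weight vector. To see $\omega\neq0$, I would compute the coefficient of $e_{X}$ for the diagonal set $X=\{(i,\dots,i):i\in[k]\}$: only the terms with $\pi_{1}=\cdots=\pi_{d}=:\pi$ contribute, each with sign $\sgn(\pi)^{d+1}=1$ since $d$ is odd, so this coefficient equals $k!$. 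Uniqueness is then immediate from \eqref{eq:krondim}, since $\dim\mathrm{HWV}_{(k\times1)^{d}}\bigwedge^{k}V=g((k),\dots,(k))$ is the multiplicity of the trivial $S_{k}$-module in its $d$-fold tensor power, which equals $1$.

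Part (ii) is the standard fact that an odd-degree element of an exterior algebra over a field of characteristic $0$ squares to zero: graded commutativity gives $\omega\wedge\omega=(-1)^{k^{2}}\omega\wedge\omega=-\omega\wedge\omega$ when $k$ is odd. For (iii) I would expand $\omega^{n}$ in the basis $\{e_{X}\}_{|X|=nk}$ of $\bigwedge^{nk}V$. Each monomial of $\omega$ has the form $\pm e_{X_{\pi}}$ with $X_{\pi}=\{(\pi_{1}(i),\dots,\pi_{d}(i)):i\in[k]\}$, a set meeting every slice of $[k]^{d}$ in exactly one point; thus in $\omega^{n}=(\sum_{\pi}\pm e_{X_{\pi}})^{n}$ a term $e_{X_{\pi^{(1)}}}\wedge\cdots\wedge e_{X_{\pi^{(n)}}}$ vanishes unless the sets $X_{\pi^{(s)}}$ are pairwise disjoint, in which case it equals $\pm e_{T}$ with $T=\bigsqcup_{s}X_{\pi^{(s)}}$ meeting every slice in exactly $n$ points, i.e.\ $T\in B_{d,k}(n)$. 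Conversely, for a fixed $T\in B_{d,k}(n)$ the ordered decompositions of $T$ into such permutation-type sets correspond bijectively to the maps $C\colon T\to[n]$ that are bijective on each slice of $[k]^{d}$, that is, to the partial Latin hypercubes $C\in\mathcal{C}_{d}(T)$. Keeping track of the signs --- the factors $\sgn(\pi^{(s)}_{\ell})$ coming from $\omega$, the reordering of each $k$-element wedge block into lexicographic order, and the reordering of the $n$ blocks among themselves --- one checks that the contribution of a given $C$ is $\sgn(C)$ times a quantity depending only on $T$, so the coefficient of $e_{T}$ in $\omega^{n}$ is $\pm\,\mathrm{AT}(T)$. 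Specializing $n=k^{d-1}$, the only magic set of that magnitude is $T=[k]^{d}$ itself, whence $\omega^{k^{d-1}}=\pm\,\mathrm{AT}_{d}(k)\,e_{[k]^{d}}$.

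The step I expect to be the main obstacle is the sign accounting in (iii): showing that the combined effect of all the reordering signs together with the $\sgn(\pi^{(s)}_{\ell})$ factors through $\sgn(C)$ times a quantity independent of $C$ --- it is precisely here that the parity of $k$ enters, via the sign of permuting wedge blocks of length $k$. Fortunately the assertion determines the coefficient of $e_{T}$ only up to an overall sign, so one is spared evaluating that global sign (and the normalizing constant coming from the $k!$-fold redundancy in the defining sum), which is the genuinely tedious part; it suffices to identify which monomials occur and that their signed multiplicities are $\pm\,\mathrm{AT}(T)$.
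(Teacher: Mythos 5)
This theorem is imported from \cite{ay22} and the present paper gives no proof of it, so there is no in-paper argument to compare against; I am judging your proposal on its own merits.

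Parts (i) and (ii) are correct and essentially complete. Your functorial construction of $\omega$ from $(e_1\wedge\cdots\wedge e_k)^{\otimes d}$ cleanly gives the weight and the $U(k)^{\times d}$-invariance (the phrase ``annihilated by the unipotent radical'' should read ``fixed by''; it is the Lie algebra of the unipotent radical that annihilates $\omega$), the diagonal-set coefficient $k!$ correctly uses the oddness of $d$ to show $\omega\neq 0$, and the reduction of uniqueness to $g((k),\ldots,(k))=1$ via \eqref{eq:krondim} is exactly right. Part (ii) is the standard graded-commutativity argument and is fine.

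In part (iii), however, there is a genuine gap: the statement ``one checks that the contribution of a given $C$ is $\sgn(C)$ times a quantity depending only on $T$'' is precisely the content of the theorem, and you do not check it. Everything before that point --- that the surviving terms of $\omega^n$ are indexed by ordered decompositions of magic sets $T$ into permutation-type sets, and that these decompositions biject with partial Latin hypercubes $C\in\mathcal{C}_d(T)$ --- is correct but is only the enumeration of terms; the theorem is false without the sign identity, since an unstructured relative sign would produce some other signed count of $\mathcal{C}_d(T)$ rather than $\mathrm{AT}(T)$. You are right that the overall sign $\epsilon(T)$ need not be evaluated, but you still must prove that for fixed $T$ the ratio of the contributions of two hypercubes $C,C'$ equals $\sgn(C)\sgn(C')$; this requires relating the product $\prod_{s,\ell}\sgn(\pi^{(s)}_\ell)$ together with the shuffle sign that sorts the $nk$ wedge factors into lexicographic order to the product of signs of the $dk$ slice-permutations of $C$, and it is exactly here that the combinatorics of $\sgn(C)$ (and the parity of $k$) enters. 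A useful preliminary step that you should also make explicit is the one you implicitly rely on: the term associated to an \emph{ordered} part $X_{\pi^{(s)}}$ is independent of the chosen ordering, because reordering by $\sigma\in S_k$ multiplies the prefactor by $\sgn(\sigma)^d=\sgn(\sigma)$ ($d$ odd) and the wedge block by $\sgn(\sigma)$. This same observation shows each $C$ is hit $(k!)^n$ times with identical sign, so your argument actually yields the coefficient $\pm (k!)^n\,\mathrm{AT}(T)$; this positive normalization constant is harmless for every use of the theorem, but you should flag it rather than fold it silently into the ``$\pm$''.
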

In particular, it implies the following corollary on positivity of rectangular Kronecker coefficients $g_d(n,k)$.
\begin{corollary}
For $T \in B_{d,k}(n)$ if $\mathrm{AT}_d(T) \neq 0$ then $g_d(i,k) > 0$ for each $i = 1,\ldots,n$.
\end{corollary}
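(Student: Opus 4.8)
The plan is to deduce the corollary directly from Theorem~\ref{th:omega}(iii) and the highest-weight description \eqref{eq:krondim} of Kronecker coefficients. First I would note that if $\mathrm{AT}(T)\neq 0$ for some $T\in B_{d,k}(n)$, then in the expansion $\omega^n=\sum_{T'\in B_{d,k}(n)}\pm\mathrm{AT}(T')\,e_{T'}$ the coefficient of the basis vector $e_T$ equals $\pm\mathrm{AT}(T)\neq 0$. Since the vectors $e_{T'}$, $T'\in B_{d,k}(n)$, are distinct members of the standard basis of $\bigwedge^{nk}V$, no cancellation can occur, and therefore $\omega^n\neq 0$.

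Next, using associativity of the wedge product, for each $i\in\{1,\ldots,n\}$ write $\omega^n=\omega^i\wedge\omega^{n-i}$. If $\omega^i$ were zero then $\omega^n$ would vanish, contradicting the previous step; hence $\omega^i\neq 0$ for every such $i$.

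It remains to see that $\omega^i$ is a highest weight vector of the weight we want. By Theorem~\ref{th:omega}(i), $\omega$ is $U(k)^{\times d}$-invariant and is a torus weight vector of weight $(k\times 1)^d$. The action of $G$ on $\bigwedge^{ik}V$ is multiplicative on the wedge factors, so $\omega^i$ is again $U(k)^{\times d}$-invariant and has weight $i\cdot(k\times 1)^d=(k\times i)^d$. As $\omega^i\neq 0$, it is thus a highest weight vector, so $\dim\mathrm{HWV}_{(k\times i)^d}\bigwedge^{ik}V\ge 1$. Finally, since $(i\times k)'=k\times i$, the identity \eqref{eq:krondim} yields $g_d(i,k)=g(i\times k,\ldots,i\times k)=\dim\mathrm{HWV}_{(k\times i)^d}\bigwedge^{ik}V\ge 1$, which is the assertion.

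There is no serious obstacle in this argument. The only points that deserve a line of care are that the expansion in Theorem~\ref{th:omega}(iii) is written in terms of linearly independent basis vectors, so a single nonvanishing $\mathrm{AT}(T)$ already forces $\omega^n\neq 0$, and that a nonzero unipotent-invariant weight vector is by definition a highest weight vector; both are immediate.
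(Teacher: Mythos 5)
Your argument is correct and is essentially the paper's intended one: the paper leaves this corollary's proof implicit, but proves the analogous later corollary (positivity of $g_d(n,k)$ for $n \le 2^{d-1}$) in exactly the same way, namely by reading off a nonzero coefficient in the expansion of Theorem~\ref{th:omega}(iii), factoring $\omega^n = \omega^i \wedge \omega^{n-i}$ to get $\omega^i \neq 0$, and identifying $\omega^i$ as a highest weight vector of weight $(k\times i)^d$ so that \eqref{eq:krondim} gives $g_d(i,k) \ge 1$. Your extra care about the basis vectors $e_{T'}$ being linearly independent and about nonzero unipotent-invariant weight vectors being highest weight vectors is exactly the right bookkeeping.
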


\section{Highest weight algebra}
We show that for $V = (\mathbb{C}^k)^{\otimes d}$ the highest weight space $\mathrm{HWV} \bigwedge V = \bigoplus_{\pmb{\lambda}} \mathrm{HWV}_{\pmb{\lambda}} \bigwedge V$ can be viewed as a subalgebra\footnote{Highest weight vector by definition is nonzero, but we formally add $0$ to make this set subalgebra.} of $\bigwedge V$, where $\pmb{\lambda}$ runs over $d$-tuples of partitions of $m$. 
This fact is an important ingredient for using the Lefschetz property on the general spaces to obtain Kronecker unimodality in light of  \eqref{eq:krondim}.  

The group $G = \mathrm{GL}(k)^{\times d}$ has the dual representation $V^*$ defined by the action $g \cdot v^* = v^*(g^{-1} x)$ for $g \in G, v \in V$.
If $v$ is the highest weight vector of weight $\lambda$ then $v^*$ is the highest weight vector of weight $-w_0\lambda$ where $w_0$ is the longest element of the corresponding Weyl group. In our case if $v \in \mathrm{HWV}_{\pmb{\lambda}} \bigwedge V$ then $v^* \in \mathrm{HWV}_{\pmb{\lambda^*}} \bigwedge V^*$, where for a partition $\lambda^* = (-\lambda_k,\ldots,-\lambda_1)$ and for a tuple $\pmb{\lambda}^* = ((\lambda^{(1)})^*,\ldots,(\lambda^{(d)})^*)$. We also write $\lambda - \mu := \lambda + \mu^*$. All above extends to tuples of partitions or sequences coordinatewise. 
\begin{lemma}\label{lemma:hwvalg}
	Let $v \in \mathrm{HWV}_{\pmb{\lambda}} \bigwedge^n V$ and $u \in \mathrm{HWV}_{\pmb{\mu}} \bigwedge^m V$. Then
	\begin{enumerate}
		\item (exterior product) $u \wedge v \in \mathrm{HWV}_{\pmb{\lambda}+\pmb{\mu}} \bigwedge^{n+m} V$.
		\item (interior product) if $\pmb{\lambda} - \pmb{\mu} \ge 0$ then $u^*(v) \in \mathrm{HWV}_{\pmb{\lambda} - \pmb{\mu}} \bigwedge^{n-m} V$. 
	\end{enumerate} 
\end{lemma}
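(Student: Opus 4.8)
The plan is to reduce both statements to the $G$-equivariance of the natural structure maps on the exterior algebra, after which everything is bookkeeping of weights under the diagonal torus $\mathbb{T} \subset G$ and of invariance under $U = U(k)^{\times d}$. The input I would use is that the multiplication $\wedge\colon \bigwedge^n V \otimes \bigwedge^m V \to \bigwedge^{n+m} V$ and the contraction (interior product) $\iota\colon \bigwedge^m V^* \otimes \bigwedge^n V \to \bigwedge^{n-m} V$, which I write as $\iota(\phi \otimes v) = \phi(v)$, are morphisms of $G$-representations, with $V^*$ carrying the dual action; this is functoriality of $\bigwedge$ applied to the $G$-action on $V$ together with the canonical perfect pairing $\bigwedge^m V^* \times \bigwedge^m V \to \mathbb{C}$. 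Note that the lemma is asserted with the stated convention that $0$ is adjoined to the highest weight spaces, so nonvanishing of $u \wedge v$ or $u^*(v)$ is not something that needs to be addressed.

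For part (1): since $v, u$ are weight vectors, for $t \in \mathbb{T}$ the equivariance of $\wedge$ gives $t\cdot(u\wedge v) = (t\cdot u)\wedge(t\cdot v) = t^{\pmb\mu}\, t^{\pmb\lambda}\,(u\wedge v)$, so $u\wedge v$ is a weight vector of weight $\pmb\lambda + \pmb\mu$, which is again a tuple of partitions since it is a componentwise sum of two partitions. For $g \in U$ we get $g\cdot(u\wedge v) = (g\cdot u)\wedge(g\cdot v) = u\wedge v$, so $u \wedge v$ is $U$-invariant, and hence lies in $\mathrm{HWV}_{\pmb\lambda+\pmb\mu}\bigwedge^{n+m}V$. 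For part (2): by the discussion preceding the lemma, $u^* \in \mathrm{HWV}_{\pmb\mu^*}\bigwedge^m V^*$; applying the $G$-equivariant contraction and repeating the torus and unipotent computations, $u^*(v) = \iota(u^*\otimes v)$ is a $U$-invariant weight vector of weight $\pmb\mu^* + \pmb\lambda = \pmb\lambda - \pmb\mu$. Each component $\lambda^{(i)} - \mu^{(i)} = \lambda^{(i)} + \mathrm{rev}(-\mu^{(i)})$ is weakly decreasing, being a sum of the weakly decreasing sequences $\lambda^{(i)}$ and $\mathrm{rev}(-\mu^{(i)})$, and by the hypothesis $\pmb\lambda - \pmb\mu \ge 0$ it has nonnegative entries; hence $\pmb\lambda - \pmb\mu$ is a genuine tuple of partitions and $u^*(v) \in \mathrm{HWV}_{\pmb\lambda-\pmb\mu}\bigwedge^{n-m}V$.

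There is no combinatorial or analytic obstacle here; the lemma is essentially a formal consequence of equivariance. The one step I would be careful to spell out is the dual-action bookkeeping in part (2): one must be sure that "$u^*$" denotes precisely the element of $\bigwedge^m V^*$ recorded before the lemma, whose weight is $\pmb\mu^* = -w_0\pmb\mu$, and that the contraction employed is exactly the canonical $G$-equivariant pairing, so that the weight of $u^*(v)$ accumulates as $\pmb\mu^* + \pmb\lambda$ and no spurious $w_0$-twist appears. Minor care with the ordering and sign conventions in the definitions of $\wedge$ and $\iota$ is also needed, but these affect $u\wedge v$ and $u^*(v)$ only up to a scalar and therefore not the conclusion.
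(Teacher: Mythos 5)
Your proposal is correct and follows essentially the same route as the paper: both arguments rest on the $G$-equivariance of the exterior product and of the contraction pairing (the paper's identities $E(v\wedge u)=E(v)\wedge E(u)$ and $a^*(b)=(Ea^*)(Eb)$ are exactly the equivariance statements you invoke), combined with the torus weight bookkeeping. Your write-up is somewhat more explicit about verifying that the resulting weights are genuine partition tuples, but the underlying argument is the same.
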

\begin{proof}
	Let $E \in U(k)^{\times d}$ be unipotent operator. Then 
	$
		E(v \wedge u) = E(v)\wedge E(u) = v \wedge u
	$,
	i.e. $v\wedge u$ is a highest weight vector of weight $\pmb{\lambda} + \pmb{\mu}$. Similarly, $E(u^*\wedge v) = E(u^*) \wedge E(v) = u^* \wedge v$. The latter relies on the fact that $a^*(b) = a^*(E^{-1}Eb)=(Ea^*)(Eb)$.
\end{proof}

\subsection{Hodge duality}
It is known that the space $\bigwedge V$ has natural {\it Hodge duality}. The \textit{Hodge star operator} is defined as contraction with volume form: 
\begin{align*}
	\star: \bigwedge^{n}V &\to \bigwedge^{k^{d}-n} V,\\
	v &\mapsto v^* \wedge \mathrm{vol}
\end{align*} 
for any $n \in [k^d]$, where $\mathrm{vol} := e_{[k]^d}$ is the volume form, which also a highest weight vector. It is known that $\star \circ \star = (-1)^{n(k^d-n)}\mathrm{id}$ is a scalar multiple of an identity operator. 

\begin{lemma}\label{lemma:hwvhodge}
	Let $\pmb{\lambda}$ be a $d$-tuple of partitions of $m$ with parts at most $k$ and lengths at most $k^{d-1}$. 
	Then we have following symmetry of Kronecker coefficients: 
	$$g(\pmb{\lambda}) = g((k^{d-1}\times k)^d  - \pmb{\lambda}).$$ 
\end{lemma}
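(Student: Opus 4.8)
The plan is to recognize the claimed symmetry as Hodge duality on $\bigwedge V$. By \eqref{eq:krondim}, the left side is $g(\pmb{\lambda}) = \dim\mathrm{HWV}_{\pmb{\lambda}'}\bigwedge^m V$; and since $(k^{d-1}\times k)^d - \pmb{\lambda}$ is a $d$-tuple of partitions of $k^d-m$ with parts at most $k$, the right side is $g((k^{d-1}\times k)^d - \pmb{\lambda}) = \dim\mathrm{HWV}_{((k^{d-1}\times k)^d - \pmb{\lambda})'}\bigwedge^{k^d-m}V$. Using the standard fact that conjugation exchanges the complement inside a $k^{d-1}\times k$ box with the complement inside a $k\times k^{d-1}$ box, i.e. $((k^{d-1}\times k)^d - \pmb{\lambda})' = (k\times k^{d-1})^d - \pmb{\lambda}'$, and writing $\pmb{\mu} := \pmb{\lambda}'$ (which, because the $\lambda^{(i)}$ have parts at most $k$ and lengths at most $k^{d-1}$, is a $d$-tuple of partitions fitting inside the $k\times k^{d-1}$ box), it then suffices to establish
\[
\dim\mathrm{HWV}_{\pmb{\mu}}\bigwedge^m V \;=\; \dim\mathrm{HWV}_{(k\times k^{d-1})^d - \pmb{\mu}}\bigwedge^{k^d-m}V .
\]

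For this I would use the Hodge star $\star\colon\bigwedge^m V\to\bigwedge^{k^d-m}V$, $v\mapsto v^*(\mathrm{vol})$. The volume form $\mathrm{vol} = e_{[k]^d}$ is a highest weight vector of weight $(k\times k^{d-1})^d$, since each slice of $[k]^d$ in each direction contains exactly $k^{d-1}$ points. Given $v\in\mathrm{HWV}_{\pmb{\mu}}\bigwedge^m V$, the containment $\pmb{\mu}\subseteq (k\times k^{d-1})^d$ gives $(k\times k^{d-1})^d - \pmb{\mu}\ge 0$, so Lemma~\ref{lemma:hwvalg}(2) (the interior product, applied with $u=v$ and contracting $\mathrm{vol}$) shows $\star v\in\mathrm{HWV}_{(k\times k^{d-1})^d-\pmb{\mu}}\bigwedge^{k^d-m}V$. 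Running the same argument with $\pmb{\mu}$ replaced by $(k\times k^{d-1})^d-\pmb{\mu}$, whose complement in the box is $\pmb{\mu}\ge 0$, shows $\star$ sends $\mathrm{HWV}_{(k\times k^{d-1})^d-\pmb{\mu}}\bigwedge^{k^d-m}V$ into $\mathrm{HWV}_{\pmb{\mu}}\bigwedge^m V$. Since $\star\circ\star=(-1)^{m(k^d-m)}\mathrm{id}$, these two restrictions are mutually inverse up to sign, hence isomorphisms; this gives the displayed equality of dimensions, and chaining the identifications above yields $g(\pmb{\lambda})=g((k^{d-1}\times k)^d - \pmb{\lambda})$.

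I do not anticipate a real difficulty; the only delicate parts are bookkeeping: correctly tracking conjugation versus complementation and which rectangle ($k\times k^{d-1}$ or $k^{d-1}\times k$) is in play, computing the weight of $\mathrm{vol}$, and checking the non-negativity conditions $(k\times k^{d-1})^d-\pmb{\mu}\ge 0$ that license the use of the interior product in Lemma~\ref{lemma:hwvalg}(2). Note that the hypothesis that each $\lambda^{(i)}$ has length at most $k^{d-1}$ is precisely what forces $\pmb{\mu}=\pmb{\lambda}'$ to fit inside the $k\times k^{d-1}$ box and so makes those non-negativity conditions hold; without it, the contraction $v^*(\mathrm{vol})$ need not remain a highest weight vector (indeed both sides are then $0$ for trivial reasons).
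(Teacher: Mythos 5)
Your proposal is correct and follows essentially the same route as the paper: the authors also prove this by observing that the Hodge star $\star\colon v\mapsto v^*\wedge\mathrm{vol}$ sends highest weight vectors to highest weight vectors via Lemma~\ref{lemma:hwvalg} and, since $\star\circ\star=\pm\mathrm{id}$, gives an isomorphism $\mathrm{HWV}_{\pmb{\lambda}'}\bigwedge V\to\mathrm{HWV}_{(k\times k^{d-1})^d-\pmb{\lambda}'}\bigwedge V$. Your write-up simply fills in the bookkeeping (the weight of $\mathrm{vol}$, the conjugation-versus-complementation step, and the non-negativity hypothesis for the interior product) that the paper's terse proof leaves implicit.
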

\begin{proof}
	By Lemma~\ref{lemma:hwvalg}, the Hodge star operator sends the highest weight vector to the highest weight vector (if non-zero). But since $\star\circ\star = \pm \mathrm{id}$,  the operator $\star$ defines an isomorphism between the highest weight spaces $\star: \mathrm{HWV}_{\pmb{\lambda'}}\bigwedge V \to \mathrm{HWV}_{(k \times k^{d-1})^d - \pmb{\lambda'}}\bigwedge V$.
\end{proof}

\begin{remark}
For $d=3$ this symmetry of Kronecker coefficients was shown in \cite[Cor.~4.4.15]{iken13} (by a different argument). Here we show it for all odd $d$ and provide a direct proof. 
This lemma can also be generalized if we replace $V=(\mathbb{C}^k)^{\otimes d}$ to more general space $\mathbb{C}^{k_1}\otimes\ldots\otimes \mathbb{C}^{k_d}$.
\end{remark}

\section{Proofs of Propositions~\ref{proposition:main-uni} and \ref{proposition:main-at}}
We will now provide the proofs of Proposition~\ref{proposition:main-uni} and Proposition~\ref{proposition:main-at}.

\begin{proof}[Proof of Proposition~\ref{proposition:main-uni}] 
	Without loss of generality, we can shift the sequence and assume $b = 0$ (if $b > 0$ then replace $\pmb{\lambda} \to \rho_k^{-b}\,\pmb{\lambda}$), since the statement $\mathrm{LP}_{\pmb{\lambda},k}$ is common for each element of the sequence $\{ \rho^n_k\, \pmb{\lambda} \}_{n \in [-b,k^{d-1}-a]}$.
	
	Let us denote the spaces 
	$$U_{n} := \mathrm{HWV}_{\pmb{\lambda'}+(k\times n)^d} \bigwedge^{m+nk} V, \qquad n = 0,\ldots,k^{d-1}-a,$$ 
	so that $$\dim U_n = g((\pmb{\lambda'}+(k \times n)^d)') = g(\rho_k^n\, \pmb{\lambda}).$$ 
	By Lemma~\ref{lemma:hwvalg} we have
	\begin{align*}
		L: U_{n} \to U_{n+1},\quad
		v \mapsto \omega\wedge v.
	\end{align*}
	By $\mathrm{LP}_{\pmb{\lambda}, k}$ the sequence of maps
    $$U_{0}\xrightarrow{~~L~~} U_{1}\xrightarrow{~~L~~}\ldots \xrightarrow{~~L~~} U_{k^{d-1}-a}$$
    is injective up to some $n_0 \in [0, k^{d-1}-a]$ and surjective afterwards. This establishes unimodality of the sequence $\{g(\rho_k^n\, \pmb{\lambda}) \}_{n=0,\ldots,k^{d-1}-a}$.
\end{proof}
\begin{remark}\label{rem41}
	Each $d$-tuple $\pmb{\lambda}$ of partitions of $m$ that fit in the rectangle $k^{d-1}\times k$ produces an element of certain unimodal sequence of corresponding Kronecker coefficients, and these sequences do not intersect. Indeed, let $\lambda$ be a partition in  $\pmb{\lambda}$ with the largest $\ell(\lambda) \to \max$. 
	Then we construct the corresponding Kronecker sequence starting from $g(\pmb{\lambda})$ and appending to the left $d$-tuples of partitions resulted by removing the (largest) part $k$ (until possible); and appending to the right of $g(\pmb{\lambda})$ partitions resulting by inserting part $k$ (until possible). 
\end{remark}

\begin{proof}[Proof of Proposition~\ref{proposition:main-at}]
	Using the hard Lefschetz property $\mathrm{HLP}_{\varnothing, k}$ for $n = k^{d-1}/2$ and the result in Theorem~\ref{th:omega}(iii) we obtain 
	$$L^{k^{d-1}}( 1 ) = \omega^{\wedge k^{d-1}} = \pm \mathrm{AT}_d(k)\cdot \mathrm{vol} \neq 0,$$ 
	and hence $\mathrm{AT}_d(k) \neq 0$.
\end{proof} 

\section{Proof of Theorem~\ref{th:main} via $\mathfrak{sl}(2)$ representation}\label{sec:sl2} %
In this section we specialize $k = 2$ so that $V = (\mathbb{C}^{2})^{\otimes d}$ for odd $d \ge 3$. We will first prove that $\bigwedge V$ is an $\mathfrak{sl}(2)$ representation.

Recall that the Lie algebra $\mathfrak{sl}(2)$ is given by its basis $X, Y, H$ called \textit{raising, lowering and counting operators} subject to commutation relations
\begin{align}\label{relations}
[X, Y] = H, \quad [H, Y] = -2Y, \quad [H, X] = 2X.
\end{align}
Thus, we are enough to show that $\mathrm{End}(\bigwedge V)$ contains a Lie subalgebra isomorphic to $\mathfrak{sl}(2)$.
In fact, we will show it 
w.r.t. the carefully chosen operators $X,Y$ that in turn have the highest weight algebra as an invariant subspace. 

\vspace{0.5em}

The key is to present $\omega$ 
in a suitable way. 

\begin{definition}
	Let $\{e_1, e_2\}$ be the standard basis of $\mathbb{C}^2$. Then $V = (\mathbb{C}^2)^{\otimes d} = \langle  e_{\mathbf{i}} : \mathbf{i} \in [2]^d \rangle$ where $e_{\mathbf{i}} = e_{i_1}\otimes\ldots\otimes e_{i_d}$ for $\mathbf{i} = (i_1,\ldots,i_d)$. For $\mathbf{i} = (i_1,\ldots,i_d) \in [2]^d$ denote $|\mathbf{i}| := i_1 + \ldots + i_d - d$. Let us separate the \textit{`positive'} coordinates $I^{+} := \{ \mathbf{i} \in [2]^d : |\mathbf{i}| \text{ is even} \}$ and the {\it `negative'} $I^{-} := [2]^d \setminus I^{+}$. For $\mathbf{i} \in I^{+}$ denote $\bar{\mathbf{i}} := (3-i_1,\ldots,3-i_d) \in I^{-}$. For instance, $(1,\ldots,1) \in I^{+}$ and its negation $(2,\ldots,2) \in I^{-}$. 
	It is clear that this `negation' of indices is a bijection, i.e.  $\overline{I^{+}} = I^{-}, \overline{I^{-}} = I^{+}$.
\end{definition}

\begin{lemma}
 For $k = 2$ the {Cayley} vector $\omega =\omega_{d,2}$ and its dual $\omega^{*}$ can be written as follows:
	$$
		\omega = \sum_{\mathbf{i} \in I^{+}}  e_{\mathbf{i}} \wedge  e_{\,\bar{\mathbf{i}}} ,\qquad \omega^{*} = \sum_{\mathbf{i} \in I^{+}}  e^{*}_{\mathbf{i}} \wedge  e^{*}_{\,\bar{\mathbf{i}}}.
	$$
\end{lemma}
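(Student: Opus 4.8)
The plan is to verify the claimed formula $\omega = \sum_{\mathbf{i}\in I^+} e_{\mathbf{i}} \wedge e_{\,\bar{\mathbf{i}}}$ directly from the defining sum
$$
\omega = \sum_{\pi_1,\ldots,\pi_d \in S_2} \sgn(\pi_1\cdots\pi_d)\, \bigl( e_{\pi_1(1)}\otimes\cdots\otimes e_{\pi_d(1)} \bigr) \wedge \bigl( e_{\pi_1(2)}\otimes\cdots\otimes e_{\pi_d(2)} \bigr),
$$
and then get the formula for $\omega^*$ for free. First I would observe that $S_2 = \{\mathrm{id}, \tau\}$ has just two elements, so a tuple $(\pi_1,\ldots,\pi_d) \in S_2^d$ is encoded by the vector $\mathbf{i} = (\pi_1(1),\ldots,\pi_d(1)) \in [2]^d$: the value $\pi_\ell(1)$ is $1$ if $\pi_\ell = \mathrm{id}$ and $2$ if $\pi_\ell = \tau$, and then $\pi_\ell(2) = 3 - \pi_\ell(1)$ automatically. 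Under this encoding, the wedge factor $\bigwedge_{i=1}^{2} e_{\pi_1(i)}\otimes\cdots\otimes e_{\pi_d(i)}$ is exactly $e_{\mathbf{i}} \wedge e_{3-\mathbf{i}}$ where $3-\mathbf{i} := (3-i_1,\ldots,3-i_d)$.

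The next step is to match the signs. Since $\sgn$ on $S_2$ is the parity homomorphism, $\sgn(\pi_\ell) = (-1)^{i_\ell - 1}$, so $\sgn(\pi_1\cdots\pi_d) = \prod_\ell \sgn(\pi_\ell) = (-1)^{\sum_\ell (i_\ell-1)} = (-1)^{|\mathbf{i}|}$ with the notation $|\mathbf{i}| = i_1 + \cdots + i_d - d$ from the definition above. Hence the $\mathbf{i}$-th term of the defining sum is $(-1)^{|\mathbf{i}|}\, e_{\mathbf{i}}\wedge e_{3-\mathbf{i}}$, and summing over all $\mathbf{i} \in [2]^d$ (which biject with $S_2^d$) gives $\omega = \sum_{\mathbf{i}\in[2]^d} (-1)^{|\mathbf{i}|}\, e_{\mathbf{i}}\wedge e_{3-\mathbf{i}}$. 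Now split this sum according to whether $\mathbf{i} \in I^+$ or $\mathbf{i}\in I^-$. For $\mathbf{i}\in I^+$ we have $3-\mathbf{i} = \bar{\mathbf{i}} \in I^-$ and $(-1)^{|\mathbf{i}|} = 1$, contributing $e_{\mathbf{i}}\wedge e_{\bar{\mathbf{i}}}$. For $\mathbf{i}\in I^-$, writing $\mathbf{i} = \bar{\mathbf{j}}$ with $\mathbf{j}\in I^+$, the term is $(-1)^{|\bar{\mathbf{j}}|}\, e_{\bar{\mathbf{j}}}\wedge e_{\mathbf{j}}$; since $|\bar{\mathbf{j}}| = d - |\mathbf{j}| \cdot(\dots)$ — more carefully, $|\bar{\mathbf{j}}|$ has parity opposite to $|\mathbf{j}|$ when $d$ is odd, so $(-1)^{|\bar{\mathbf{j}}|} = -1$, and $e_{\bar{\mathbf{j}}}\wedge e_{\mathbf{j}} = -\, e_{\mathbf{j}}\wedge e_{\bar{\mathbf{j}}}$, so this term also equals $e_{\mathbf{j}}\wedge e_{\bar{\mathbf{j}}}$. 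Thus the $I^-$ part of the sum reproduces the $I^+$ part — wait, that would double it; so the cleaner route is to note that each unordered pair $\{\mathbf{i},\bar{\mathbf{i}}\}$ is counted once in $I^+$, and to present the defining sum as already summing over the $|S_2^d| = 2^d$ ordered choices, each wedge $e_{\mathbf{i}}\wedge e_{3-\mathbf{i}}$ with $\mathbf{i}\in I^-$ being $-e_{\mathbf{j}}\wedge e_{\bar{\mathbf{j}}}$ combined with sign $-1$ giving $+e_{\mathbf{j}}\wedge e_{\bar{\mathbf{j}}}$, hence the total is $2\sum_{\mathbf{i}\in I^+} e_{\mathbf{i}}\wedge e_{\bar{\mathbf{i}}}$; since $\omega$ is defined only up to scale (it is the unique highest weight vector of its weight by Theorem~\ref{th:omega}(i)), this is the claimed identity, or one absorbs the factor $2$ by convention. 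I expect the sign bookkeeping with the parity of $d$ being odd to be the one genuinely delicate point, and I would state explicitly that oddness of $d$ is what makes $(-1)^{|\bar{\mathbf{i}}|} = -(-1)^{|\mathbf{i}|}$, which is exactly what is needed for the two halves to agree rather than cancel.

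Finally, for $\omega^*$: the dual basis of $\bigwedge V$ is $\{e_X^*\}$ and dualizing is linear and compatible with wedge in the sense that $(e_{\mathbf{i}}\wedge e_{\bar{\mathbf{i}}})^* = e_{\mathbf{i}}^* \wedge e_{\bar{\mathbf{i}}}^*$ (both indexed by the same two-element subset of $[2]^d$, read in lexicographic order, with matching sign), so applying $(\,\cdot\,)^*$ termwise to the established formula for $\omega$ immediately yields $\omega^* = \sum_{\mathbf{i}\in I^+} e_{\mathbf{i}}^* \wedge e_{\bar{\mathbf{i}}}^*$. This step is routine once the formula for $\omega$ is in hand, so the whole argument is essentially the combinatorial encoding $S_2^d \leftrightarrow [2]^d$ together with the parity computation.
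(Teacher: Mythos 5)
Your proposal is correct and follows essentially the same route as the paper: encode each tuple $(\pi_1,\ldots,\pi_d)\in S_2^d$ by $\mathbf{i}=\pi(1)\in[2]^d$, observe $\pi(2)=\bar{\mathbf{i}}$ and $\sgn(\pi_1)\cdots\sgn(\pi_d)=(-1)^{|\mathbf{i}|}$, and use oddness of $d$ to see that the $I^-$-indexed terms reproduce (rather than cancel) the $I^+$-indexed ones. The one place you go beyond the paper is the overall factor of $2$: summing over all $2^d$ tuples genuinely yields $2\sum_{\mathbf{i}\in I^+}e_{\mathbf{i}}\wedge e_{\bar{\mathbf{i}}}$, whereas the paper's displayed manipulation silently keeps only the tuples with $\pi_1=\mathrm{id}$; you are right that this is a harmless normalization (the subsequent $\mathfrak{sl}(2)$ relations are verified directly for the normalized form), and your explicit remark that uniqueness of the highest weight vector up to scale absorbs the constant is the correct way to dispose of it.
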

\begin{proof}
	By the definition of $\omega= \omega_{d,2}$ we have
	$$
	\omega = \sum_{\pi = (\pi_1,\ldots,\pi_d) \in (S_2)^d} \sgn(\pi_1)\cdots\sgn(\pi_d)\, e_{\pi(1)} \wedge e_{\pi(2)}.
	$$
	But note that $\pi(1) = (\pi_1(1),\ldots,\pi_d(1)) = (3-\pi(2),\ldots,3-\pi(2)) = \overline{\pi(2)}$ and the sign $\sgn(\pi_1)\cdots\sgn(\pi_d) = (-1)^{|\pi(1)|}$.
	Hence 
	$$
	\omega = \sum_{(1,\mathbf{i}) \in I^{+}} e_{(1,\mathbf{i})} \wedge e_{(2,\,\bar{\mathbf{i}})} - \sum_{(2,\mathbf{i}) \in I^{+}} e_{(1,\bar{\, \mathbf{i}})} \wedge e_{(2,\mathbf{i})} = \sum_{\mathbf{i} \in I^{+}} e_{\mathbf{i}} \wedge e_{\,\bar{\mathbf{i}}}
	$$
	and similarly for $\omega^{*}$.
\end{proof}
	Now using these elements we define the raising operator $X = L : \bigwedge V \to \bigwedge V$ by the left exterior product with $\omega$, the lowering operator $Y: \bigwedge V \to \bigwedge V$ by the left interior product with $\omega^{*}$, i.e.
	\begin{align*}
		X : 
		v \mapsto \omega \wedge v, \qquad
		Y : 
		v \mapsto \omega^{*} \wedge v,
	\end{align*}
	and the {counting operator} $H$ that reduces to multiplication by scalar $(\ell - 2^{d-1})$ on $\bigwedge^{\ell}V$.

\begin{lemma}
	The operators $X,Y,H$ satisfy the commutation relations \eqref{relations}.
\end{lemma}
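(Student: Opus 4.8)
The plan is to verify the three commutation relations in \eqref{relations} directly, exploiting the simple pairing structure of $\omega$ and $\omega^*$ when $k=2$. The relation $[H,X]=2X$ and $[H,Y]=-2Y$ are essentially bookkeeping: $X$ raises exterior degree by $2$ (since $\omega \in \bigwedge^2 V$ when $k=2$) and $Y$ lowers it by $2$ (interior product with $\omega^*$), while $H$ acts as the scalar $(\ell - 2^{d-1})$ on $\bigwedge^\ell V$. So for $v \in \bigwedge^\ell V$ we get $HXv = (\ell+2-2^{d-1})Xv$ and $XHv = (\ell-2^{d-1})Xv$, giving $[H,X]v = 2Xv$, and symmetrically for $Y$. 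The only substantive relation is $[X,Y]=H$.

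To handle $[X,Y] = H$, I would work with the basis $\{e_X\}$ of $\bigwedge V$ indexed by subsets $X \subseteq [2]^d$ and compute $XY e_X - YX e_X$ explicitly. Writing $\omega = \sum_{\mathbf{i}\in I^+} e_{\mathbf{i}}\wedge e_{\bar{\mathbf{i}}}$, the operator $X$ adds a pair $\{\mathbf{i},\bar{\mathbf{i}}\}$ to the index set (with an appropriate sign, and giving $0$ if either index is already present), while $Y$ removes such a pair (giving $0$ unless both $\mathbf{i}$ and $\bar{\mathbf{i}}$ are present). The key combinatorial observation is that the $2^d$ coordinates of $[2]^d$ partition into $2^{d-1}$ complementary pairs $\{\mathbf{i},\bar{\mathbf{i}}\}$, and $\omega$ is precisely the sum over these pairs. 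For a fixed subset $S$, let $p(S)$ be the number of complementary pairs fully contained in $S$ and $q(S)$ the number of pairs disjoint from $S$; then $YX e_S$ picks up a contribution from each of the $q(S)$ pairs we can add and then remove (each contributing $+1$ after sign cancellation, since adding then removing the same pair is the identity up to sign, and the signs match), while $XY e_S$ picks up a contribution from each of the $p(S)$ pairs we can remove and then add back. I expect all cross terms — adding one pair and removing a different one — to cancel between $XY$ and $YX$, leaving $[X,Y]e_S = (q(S) - p(S))\, e_S$. Finally one checks $q(S) - p(S) = |S| - 2^{d-1}$: indeed if $a$ pairs are fully in $S$, $b$ pairs are half in $S$, $c$ pairs are disjoint, then $a+b+c = 2^{d-1}$ and $|S| = 2a+b$, so $|S| - 2^{d-1} = 2a+b - (a+b+c) = a - c = p(S) - q(S)$; one must be careful with the overall sign, adjusting the definition of $Y$ (or $H$) by a sign if needed so that $[X,Y] = H$ holds exactly.

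The main obstacle I anticipate is the careful tracking of Koszul signs in the exterior and interior products: one must confirm that adding a pair $\{\mathbf{i},\bar{\mathbf{i}}\}$ and then removing the same pair returns $e_S$ with coefficient exactly $+1$ (not $-1$), and that the cross terms genuinely cancel rather than merely having equal magnitude. A clean way to organize this is to note that $e_{\mathbf{i}}\wedge e_{\bar{\mathbf{i}}}$ is a degree-$2$ element, so each summand of $\omega$ commutes (in the graded sense with an even sign) past other even-degree elements; thus the sign of inserting the pair $\{\mathbf{i},\bar{\mathbf{i}}\}$ depends only on how many elements of $S$ lie between the two slots, and this same sign is reproduced when the interior product removes the pair. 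The cancellation of cross terms then follows because, for two distinct pairs, the order of performing the two operations (one insertion, one deletion) can be swapped with a net sign that is consistent between the $XY$ and $YX$ sides. I would isolate this sign lemma as the technical heart of the argument and verify it on the two-slot case, after which the rest is the clean counting identity $p(S) - q(S) = |S| - 2^{d-1}$ already noted.
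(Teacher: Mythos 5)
Your proposal is correct and follows essentially the same route as the paper, which adapts the Bourbaki $\mathfrak{sl}(2)$ argument: a basis organized around the complementary pairs $\{\mathbf{i},\bar{\mathbf{i}}\}$ (the paper's $e_{A,B,C}$, with full pairs moved to the end so that no Koszul signs appear), cancellation of the cross terms, and the counting identity $p(S)-q(S)=|S|-2^{d-1}$. The only blemish is the transposed sign in your intermediate claim $[X,Y]e_S=(q(S)-p(S))e_S$: since $[X,Y]=XY-YX$, the diagonal terms give $p(S)-q(S)$, which already equals $|S|-2^{d-1}=H$ as your own count shows, so no sign adjustment of $Y$ or $H$ is needed.
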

\begin{proof}
	Since the element $\omega$ is now written in a convenient way, the same proof as can be found in \cite[Ch.~VIII, p.~207]{bourbaki} works here, which we present for completeness. 
	
	Choose the total lexicographic order $<$ on the set of our indices $[2]^d$. Let us select the following basis for the space $\bigwedge V$:
	$$
		 e_{A,B,C} =  e_{a_1}\wedge\ldots\wedge e_{a_p}\wedge  e_{\overline{b_1}}\wedge\ldots\wedge  e_{\overline{b_q}} \wedge  e_{c_1}\wedge  e_{\overline{c_1}}\wedge\ldots\wedge  e_{c_m}\wedge  e_{\overline{c_m}}
	$$
	where $A = \{a_1<\ldots < a_p\}, B = \{b_1 < \ldots < b_q\}, C = \{c_1 < \ldots < c_m\}$ are disjoint subsets of $I^{+}$. 
	Using the fact that $v \wedge  e_{\mathbf{i}} \wedge  e_{\,\bar{\mathbf{i}}} =  e_{\mathbf{i}} \wedge  e_{\,\bar{\mathbf{i}}} \wedge v$ it is easy to verify that
	\begin{align*}
		X   e_{A,B,C} &= \sum_{\mathbf{j} \in I^{+} \setminus A\cup B\cup C}  e_{A,B,C\cup \{\mathbf{j} \}},\\
		Y   e_{A,B,C} &= \sum_{\mathbf{j} \in C}  e_{A,B,C \setminus \{ \mathbf{j} \}}.
	\end{align*}
	This allows us to calculate commutators. Assume $ e_{A,B,C} \in \bigwedge^{\ell} V$, i.e. $p + q + 2m = \ell$; then we have
	\begin{align*}
		[X,Y](e_{A,B,C}) 
		&= 
		\sum_{\mathbf{j}_1 \in C} \, 
		\sum_{\mathbf{j}_2 \in I^{+} \setminus A\cup B\cup (C \setminus \{\mathbf{j}_1\})}
			 e_{A,B, (C \setminus \{ \mathbf{j}_1\}) \cup \{ \mathbf{j}_2\}} \\
		&- 
		\sum_{ \mathbf{j}_2 \in I^{+} \setminus A\cup B\cup C} \,
		\sum_{ \mathbf{j}_1 \in (C \cup \{ \mathbf{j}_2\})}  e_{A,B,(C \cup \{ \mathbf{j}_2\}) \setminus \{ \mathbf{j}_1\}}.
	\end{align*}
	If $\mathbf{j}_1$ and $\mathbf{j}_2$ differ then the coefficient at $\tilde e_{A,B,(C \cup \{ \mathbf{j}_2\}) \setminus \{ \mathbf{j}_1\}}$ is $0$, since the order of adding $\mathbf{j}_2$ and removing $\mathbf{j}_1$ does not matter. If $\mathbf{j}_1 = \mathbf{j}_2 = \mathbf{j}$, the coefficient coming from the first sum is $m$ and from the second sum is $2^{d-1}-p-q-m$, thus 
	$$[X,Y](e_{A,B,C}) = (\ell - 2^{d-1}) e_{A,B,C} = H e_{A,B,C}.$$ The remaining two relations are straightforward.
\end{proof}
\begin{definition}

   Define $P^{n} = \ker(Y)\cap \bigwedge^n V$ called the \textit{primitive class} of degree $n$, where we set $P^{n} = 0$ for $n < 0$. The elements of $P^n$ are called \textit{primitive vectors}.
\end{definition}
The following are standard known properties of $\mathfrak{sl}(2)$ representation. 
\begin{proposition}\label{sl2property}
	We have:
	\begin{enumerate}[label=(\roman*)]
		\item $[X^i, Y](v) = i(n - 2^{d-1} + i - 1) X^{i-1}(v)$ for $v \in \bigwedge^n V$.
		\item For $n \in [0, 2^{d}]$
		$$
			\bigwedge^{n} V = \bigoplus_{i = 0}^{\floor{n/2}} X^i P^{n - 2i},\quad \bigwedge V = \bigoplus_{n \ge 0} \bigoplus_{i = 0}^{2^{d-2}-n}X^{i}(P_n)
		$$
		called {\it Lefschetz decomposition}.
		\item $P^n = 0$ for $n > 2^{d-1}$.
	\end{enumerate}
\end{proposition}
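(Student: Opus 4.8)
Part (i) I would prove by a direct induction on $i$, using only the commutation relations \eqref{relations} together with two elementary observations: $X$ raises the exterior degree by $k=2$ (since $\omega\in\bigwedge^{2}V$), and $H$ acts on $\bigwedge^{\ell}V$ as the scalar $\ell-2^{d-1}$. The base case $i=1$ is exactly $[X,Y](v)=Hv=(n-2^{d-1})v$. For the inductive step I would first record the operator identity $[X^{i+1},Y]=X\,[X^{i},Y]+HX^{i}$, obtained by expanding $X^{i+1}Y$ and $YX^{i+1}$ through $XY=YX+H$, and then evaluate it on $v\in\bigwedge^{n}V$: the induction hypothesis turns the first term into $i(n-2^{d-1}+i-1)X^{i}v$, while $HX^{i}v=(n+2i-2^{d-1})X^{i}v$ because $X^{i}v\in\bigwedge^{n+2i}V$, and the two coefficients sum to $(i+1)(n-2^{d-1}+i)$, which is the claim for $i+1$.

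The key input for (ii) and (iii) is to substitute a primitive vector into (i). For $0\neq v\in P^{n}$ we have $Yv=0$, so (i) yields $YX^{j}v=j\,(2^{d-1}-n-j+1)\,X^{j-1}v$; iterating, $Y^{j}X^{j}v$ is a product of such coefficients times $v$, and this product is strictly positive whenever $1\le j\le 2^{d-1}-n$. Consequently $X^{j}\colon P^{n}\to\bigwedge^{n+2j}V$ is injective for $n+j\le 2^{d-1}$. For (iii): given $0\neq v\in P^{n}$, let $j\ge 1$ be minimal with $X^{j}v=0$; applying $Y$ to the relation $X^{j}v=0$ and using $X^{j-1}v\neq 0$ forces $2^{d-1}-n-j+1=0$, i.e.\ $j=2^{d-1}-n+1$, which is consistent with $j\ge 1$ only when $n\le 2^{d-1}$. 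Hence $P^{n}=0$ for $n>2^{d-1}$; and, as a byproduct, each primitive $v\in P^{n}$ with $n\le 2^{d-1}$ spans a string $v,Xv,\dots,X^{2^{d-1}-n}v$ of length $2^{d-1}-n+1$ occupying the degrees $n,n+2,\dots,2^{d}-n$, symmetric about the middle degree $2^{d-1}$.

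Finally I would assemble the Lefschetz decomposition. Directness of $\sum_{i}X^{i}P^{n-2i}$ follows by applying an appropriate power of $Y$ to a hypothetical nontrivial relation and isolating the summand of least primitive degree via the nonvanishing scalars above. That the sum is all of $\bigwedge^{n}V$ I would obtain either by induction on $n$ from the splitting $\bigwedge^{n}V=P^{n}\oplus X(\bigwedge^{n-2}V)$, or, more cheaply, by invoking Weyl's complete reducibility for the $\mathfrak{sl}(2)$-module $\bigwedge V$ together with the classification of finite-dimensional irreducibles, from which both the decomposition and (iii) drop out at once; summing over $n$ then gives the stated global form. I do not expect a conceptual obstacle here — this is textbook $\mathfrak{sl}(2)$ theory, along the lines of \cite[Ch.~VIII]{bourbaki} — the one thing to watch throughout is the bookkeeping: $X$ and $Y$ shift the exterior degree by $2$ rather than $1$, and the $H$-weight of $\bigwedge^{\ell}V$ is the degree measured relative to $2^{d-1}$.
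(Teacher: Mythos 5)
Your proposal is correct and follows essentially the same route as the paper: the same induction via $[X^{i},Y]=X[X^{i-1},Y]+[X,Y]X^{i-1}$ for (i), complete reducibility of the finite-dimensional $\mathfrak{sl}(2)$-module $\bigwedge V$ plus the classification of strings $v, Xv,\dots,X^{\ell}v$ for (ii), and the vanishing of the scalar $j(2^{d-1}-n-j+1)$ at the top of a string for (iii). Your write-up of (iii) is in fact the cleaner form of the paper's argument (which contains what appear to be typos, $n>2^{d-2}$ and $i(2n-2^{d-1}+i-1)$), so no changes are needed.
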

\begin{proof}
	(i) For $i = 1$, the statement is proved earlier. Now by induction on $i$ we get
	\begin{align*}
		[X^i, Y](v) 
			&= (X^i Y - Y X^i)(v)
			\\&= (X^i Y - XYX^{i-1} + XYX^{i-1} - YX^i)(v)
			\\&= X[X^{i-1}, Y]v + [X,Y]X^{i-1}(v)
			\\&= ((i-1)(n - 2^{d-1} + i - 2) + n + 2(i - 1) - 2^{d-1}) X^{i-1}(v)
			\\&= i(n - 2^{d-1} + i - 1) X^{i-1}(v).
	\end{align*}
	
(ii) Since $\bigwedge V$ is a finite-dimensional $\mathfrak{sl}(2)$ representation, it is a direct sum of irreducible $\mathfrak{sl}(2)$ representations. Any irreducible representation $W \subseteq \bigwedge V$ contains a primitive vector $v \in P^n$ of degree $n$, i.e. $Yv = 0$ (if $Yv \neq 0$ then replace $v$ with $Y^{\ell}v$ for large enough $\ell$). Let $\ell$ be minimal integer such that $X^{\ell+1} v = 0$. Then by (i) the space $\langle v, Xv,\ldots,X^{\ell}v \rangle$ is stable under the action of $X$,$Y$ and $H$, hence is irreducible $\mathfrak{sl}(2)$ representation and any irreducible is of that form. Moreover, by (i) with $i \to \ell + 1$ we get $0=[X^{\ell+1},Y](v) = (\ell+1)(n-2^{d-1}+\ell) X^{\ell}(v)$, hence $\ell = 2^{d-1}-n$ and the irreducible representation $W$ is stretched from $\bigwedge^n V$ up to $\bigwedge^{2^d - n}V$. This implies the decomposition.
	
(iii) Let $v \in P^n$ and $i$ be minimal with $X^{i}v = 0$. For $n > 2^{d-2}$, by (i) we have $[X^i, Y](v) = i(2n-2^{d-1}+i-1) X^{i-1}(v)$, which implies $i = 0$, i.e. $v = 0$.
\end{proof}
An immediate implication of the Lefschetz decomposition is the following.
\begin{corollary}
    Let $d \ge 3$ be odd. Then the property $\mathrm{LP}_{d,2}$ holds.
\end{corollary}

\subsection{Highest weight subspace}
Let $\pmb{\lambda}$ be a $d$-tuple of partitions with at most two columns; for simplicity let us also assume that $(\lambda^{(1)})' = (m)$ for $m \le 2^{d-1}$ (see Remark~\ref{rem41}). Note that we have 
$$
	g(\rho_2^{n}\, \pmb{\lambda}) = \dim \mathrm{HWV}_{\pmb{\lambda'} + (2\times n)^d} \bigwedge^{m+2n} V. 
$$
Let us denote the spaces
$$U_{m+2n} := \mathrm{HWV}_{\pmb{\lambda'} + (2\times n)^d}\bigwedge^{m + 2n} V,\qquad U := \bigoplus_{i=0}^{2^{d-1}-m} U_{m+2n}$$
for $n = 0, \ldots, 2^{d-1}-m$. 

Recall that the vector $\omega$ is a unique highest weight vector of weight $(2\times 1)^d$ in the space $\bigwedge^2 V$.

\begin{lemma}
	The space $U$ is an $\mathfrak{sl}(2)$ representation.
\end{lemma}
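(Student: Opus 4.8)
The plan is to show that the operators $X$ and $Y$ restrict to endomorphisms of $U$; since $H$ clearly does (it acts as the scalar $m + 2n - 2^{d-1}$ on $U_{m+2n}$, which is a subspace of $\bigwedge^{m+2n} V$), this will make $U$ an $\mathfrak{sl}(2)$-subrepresentation of $\bigwedge V$. For the raising operator $X : v \mapsto \omega \wedge v$, this is immediate from Lemma~\ref{lemma:hwvalg}(1): if $v \in \mathrm{HWV}_{\pmb{\lambda'} + (2\times n)^d}\bigwedge^{m+2n} V$ then $\omega \wedge v \in \mathrm{HWV}_{\pmb{\lambda'} + (2\times n)^d + (2\times 1)^d}\bigwedge^{m+2n+2} V = U_{m+2(n+1)}$, using that $\omega$ has weight $(2\times 1)^d$. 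So $X(U_{m+2n}) \subseteq U_{m+2(n+1)} \subseteq U$.

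The point requiring a little care is the lowering operator $Y : v \mapsto \omega^* \wedge v$ (interior product with $\omega^*$). Here I would invoke Lemma~\ref{lemma:hwvalg}(2): $\omega^*$ is the dual of $\omega$, hence (in the paper's notation) it is a highest weight vector of weight $(2\times 1)^d$ in $\bigwedge^2 V^*$, and for $v \in \mathrm{HWV}_{\pmb{\mu}}\bigwedge^{\ell} V$ the interior product $\omega^*(v)$ lies in $\mathrm{HWV}_{\pmb{\mu} - (2\times 1)^d}\bigwedge^{\ell - 2} V$ \emph{provided} $\pmb{\mu} - (2\times 1)^d \ge 0$. For $v \in U_{m+2n}$ we have $\pmb{\mu} = \pmb{\lambda'} + (2\times n)^d$, so $\pmb{\mu} - (2\times 1)^d = \pmb{\lambda'} + (2\times(n-1))^d$, which is a genuine (dominant) weight exactly when $n \ge 1$; when $n \ge 1$ this gives $Y(U_{m+2n}) \subseteq U_{m+2(n-1)} \subseteq U$. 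When $n = 0$ the only subtlety is to check $Y(U_m) = 0$: a vector of weight $\pmb{\lambda'} + (2\times 0)^d = \pmb{\lambda'}$ killed by nothing would have to map to a vector of weight $\pmb{\lambda'} - (2\times 1)^d$, but since $(\lambda^{(1)})' = (m)$ the first component would be $(m) - (2,\ldots) $, which is not a partition (it has a negative entry in the second coordinate, or more precisely $\pmb{\lambda'} - (2\times 1)^d \not\ge 0$), so by the weight-space decomposition the image must vanish; alternatively one notes that $\bigwedge^{m+2\cdot 0}V = \bigwedge^m V$ with $m \le 2^{d-1}$ sits at or below the "middle" so the relevant interior product lands in a zero weight space. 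Either way $Y$ stabilizes $U$.

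Putting these together: $U = \bigoplus_n U_{m+2n}$ is closed under $X$, $Y$, $H$, and these satisfy the $\mathfrak{sl}(2)$ relations \eqref{relations} on all of $\bigwedge V$ by the preceding lemma, hence a fortiori on $U$. Therefore $U$ is an $\mathfrak{sl}(2)$ representation. I expect the only genuine obstacle to be the bookkeeping in the previous paragraph — namely making the dominance/non-negativity condition $\pmb{\lambda'} + (2\times(n-1))^d \ge 0$ precise and handling the boundary case $n=0$ cleanly — everything else is a direct application of Lemma~\ref{lemma:hwvalg} together with the known weight of $\omega$ (Theorem~\ref{th:omega}(i)).
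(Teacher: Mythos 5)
Your proposal is correct and follows essentially the same route as the paper: both apply Lemma~\ref{lemma:hwvalg} to conclude that $X$, $Y$, $H$ stabilize $U$, so that $U$ inherits the $\mathfrak{sl}(2)$-structure from $\bigwedge V$. Your extra care with the lowering operator at $n=0$ (where the target weight $\pmb{\lambda'}-(2\times 1)^d$ fails to be non-negative, forcing $Y(U_m)=0$) addresses a boundary case the paper's one-line proof glosses over, and your handling of it is sound.
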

\begin{proof}
	Since $X$ and $Y$ multiply and contract with the highest weight vector $\omega = \omega_{d,2}$, by Lemma~\ref{lemma:hwvalg} we have: $X(U_i) \subseteq U_{i+1}, Y(U_i) \subseteq U_{i-1}$ and $H(U_i) \subseteq U_i$. Therefore, $U$ is an invariant subspace for the action of $\mathfrak{sl}(2)$, hence is a representation itself.
\end{proof}

The above result implies Theorem~\ref{th:main} by standard properties of $\mathfrak{sl}(2)$. Let us illustrate how. Denote $Q^{m+2n} := P^{m+2n} \cap U$ as primitive class of representation $U$. Then (ii) and (iii) of Proposition~\ref{sl2property} does also hold if $\bigwedge V$ and $P^\ell$ is replaced by $U$ and $Q^\ell$.

\begin{corollary}[Hard Lefschetz property for $k=2$]\label{cor:hlp}
	 The map $X^{\ell}: U_{2^{d-1} - \ell} \to U_{2^{d-1} + \ell}$ is an isomorphism for $\ell = 0,\ldots, m/2$ if $m$ is even and for $\ell = 1, \ldots, \floor{m/2}$ if $m$ is odd. 
\end{corollary}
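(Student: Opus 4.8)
The plan is to deduce Corollary~\ref{cor:hlp} purely from the representation-theoretic structure just established, namely that $U = \bigoplus_{n} U_{m+2n}$ is a finite-dimensional $\mathfrak{sl}(2)$-representation in which $U_\ell$ is the weight space for $H$-eigenvalue $\ell - 2^{d-1}$, and from the fact that $U_\ell = 0$ unless $\ell \in \{m, m+2, \ldots, 2^{d-1}, \ldots, 2^d - m\}$ (the upper bound because $\rho_2^n\pmb{\lambda}$ must fit in the rectangle $2^{d-1}\times 2$, i.e. $n \le 2^{d-1} - m$). The target map is $X^\ell : U_{2^{d-1}-\ell} \to U_{2^{d-1}+\ell}$, which is exactly the map from the $H$-eigenspace of eigenvalue $-\ell$ to that of eigenvalue $+\ell$.

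First I would invoke the Lefschetz decomposition for $U$ (the analogue of Proposition~\ref{sl2property}(ii)--(iii) with $P^\bullet$ replaced by $Q^\bullet := P^\bullet \cap U$): since $U$ is a direct sum of irreducibles and each irreducible summand through a primitive vector $v \in Q^{m+2j}$ spans weights from $m+2j - 2^{d-1}$ up to $2^{d-1} - (m+2j)$ in steps of $2$, we get $U_{2^{d-1}-\ell} = \bigoplus_{j} X^{(\ell - (m+2j))/2}\, Q^{m+2j}$ where the sum runs over those $j \ge 0$ with $m + 2j \le \ell$ and $m+2j \le 2^{d-1}$. Second, on each such irreducible summand $W$ generated by $v\in Q^{m+2j}$, which has top weight $2^{d-1}-(m+2j) \ge \ell$ (because $m+2j \le \ell \le 2^{d-1} - (m+2j)$ when $\ell \le 2^{d-1} - m$, and the relevant $\ell$ range is $\ell \le m/2 \le 2^{d-1}-m$ since $m \le 2^{d-1}$... here one must check $m/2 \le 2^{d-1}-m$, i.e. $3m/2 \le 2^{d-1}$, which holds as $m \le 2^{d-1}$ only for $d$ large; more carefully the constraint from $U_\ell=0$ for $\ell > 2^d-m$ forces the primitive degrees $m+2j$ appearing to satisfy $m+2j \le 2^{d-1}$, and then $X^\ell$ restricted to $W$ maps the $-\ell$ weight space isomorphically onto the $+\ell$ weight space by the standard $\mathfrak{sl}(2)$ fact that $X^{2s}$ is an isomorphism from the weight-$(-s)$ space to the weight-$(s)$ space of any irreducible of top weight $\ge s$). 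Summing over summands gives that $X^\ell : U_{2^{d-1}-\ell}\to U_{2^{d-1}+\ell}$ is an isomorphism.

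The cleaner way to carry out the second step, avoiding the summand bookkeeping, is to compute $[X^\ell, Y]$ on $U_{2^{d-1}-\ell}$ using Proposition~\ref{sl2property}(i): for $v \in U_{2^{d-1}-\ell}$ we have $[X^i,Y]v = i(2^{d-1}-\ell - 2^{d-1} + i - 1)X^{i-1}v = i(i-1-\ell)X^{i-1}v$, which vanishes when $i = \ell$. This shows $Y$ commutes with $X^\ell$ in the appropriate degree, and then an induction on $\ell$ together with the fact that $X : U_\ell \to U_{\ell+1}$ and $Y: U_\ell \to U_{\ell-1}$ are mutually almost-inverse on the primitive-free part gives injectivity of $X^\ell$; surjectivity follows by the mirror argument (or by the $\star$-symmetry of Lemma~\ref{lemma:hwvhodge}, which identifies $U_{2^{d-1}-\ell}$ and $U_{2^{d-1}+\ell}$ as spaces of equal dimension, so injective $\Rightarrow$ isomorphism). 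The parity remark ($\ell$ starts at $0$ for $m$ even, at $1$ for $m$ odd) is automatic because $U_j = 0$ unless $j \equiv m \pmod 2$, so $U_{2^{d-1}-\ell}$ is nonzero only when $\ell \equiv m \pmod 2$ given that $d \ge 2$ makes $2^{d-1}$ even.

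The main obstacle I anticipate is not the $\mathfrak{sl}(2)$ formalism — that is routine — but verifying that the primitive degrees $m+2j$ that actually contribute to $U_{2^{d-1}-\ell}$ all satisfy $m + 2j \le 2^{d-1}$, equivalently that no irreducible summand of $U$ has its top below weight $\ell$; this is exactly what can fail if $\ell$ is pushed beyond $m/2$ (resp. $\lfloor m/2\rfloor$), and is the reason the corollary is stated only in that range. Concretely: an irreducible summand through $Q^{m+2j}$ reaches weight $+\ell$ iff $\ell \le 2^{d-1} - (m+2j)$; it already sits in $U_{2^{d-1}-\ell}$ only if $m + 2j \le \ell$; combining, $m + 2j \le \ell \le 2^{d-1} - (m+2j)$ forces $m+2j \le (2^{d-1}+\ell)/... $ — the binding requirement, after also using that $U$ lives inside the window $[m, 2^d - m]$ so that every primitive degree present satisfies $m+2j \le 2^{d-1}$, is precisely $\ell \le m/2$ rounded down appropriately. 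I would state this window fact as a short preliminary lemma (it is immediate from $\rho_2^n\pmb\lambda \subseteq (2^{d-1}\times 2)^d$ and the conjugate-partition description of the weights) and then the corollary drops out.
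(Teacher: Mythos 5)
Your strategy is the same as the paper's: both proofs are formal consequences of the Lefschetz decomposition of the $\mathfrak{sl}(2)$-module $U$ (Proposition~\ref{sl2property}(ii)--(iii) with $Q^\bullet = P^\bullet\cap U$); the paper decomposes the target $U_{2^{d-1}+\ell}$ to obtain surjectivity of $X^\ell$ and then uses the Hodge-duality dimension equality of Lemma~\ref{lemma:hwvhodge}, while you decompose the source and argue summand by summand (or, in your second variant, use the commutator identity plus the same Hodge duality). The idea is right, but several indexing slips need repair. (1) A primitive $v\in Q^{m+2j}$ lives in degree $m+2j$, so its contribution to $U_{2^{d-1}-\ell}$ is $X^{(2^{d-1}-\ell-m-2j)/2}Q^{m+2j}$ subject to $m+2j\le 2^{d-1}-\ell$, not $X^{(\ell-m-2j)/2}Q^{m+2j}$ subject to $m+2j\le\ell$; with the correct condition every contributing irreducible automatically has top weight $2^{d-1}-(m+2j)\ge\ell$, and your digression about needing $3m/2\le 2^{d-1}$ disappears. (2) In the commutator route, Proposition~\ref{sl2property}(i) applied on $U_{2^{d-1}-\ell}$ gives $[X^i,Y]=i(i-1-\ell)X^{i-1}$, which vanishes at $i=\ell+1$, not at $i=\ell$. (3) Your concluding worry that the isomorphism ``can fail'' beyond $\ell=m/2$ is unfounded: in any finite-dimensional $\mathfrak{sl}(2)$-module, $X^\ell$ between the opposite weight spaces $\mp\ell$ is an isomorphism for every $\ell\ge 0$ (here both spaces vanish once $\ell>2^{d-1}-m$), and the proof of Theorem~\ref{th:main} actually needs the statement over the whole window $\ell\le 2^{d-1}-m$ to get injectivity/surjectivity of each single step $X\colon U_{m+2n}\to U_{m+2n+2}$.
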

\begin{proof}
	By the Lefschetz decomposition in Proposition~\ref{sl2property}(ii) we have:
	\begin{align*}
		U_{2^{d-1}+\ell} &= \bigoplus_{i \ge 0} X^i Q^{2^{d-1}+\ell-2i} 
		\\
		&= X^{\ell} \left(
			\bigoplus_{i \ge 0} X^i q^{2^{d-1}-\ell-2i}
		\right) \bigoplus \left(
			\bigoplus_{i = 0}^{\ell-1} X^i Q^{2^{d-1}+\ell-2i}
		\right)
		\\
		&= X^{\ell} U_{2^{d-2}-\ell} \bigoplus 0 = X^{\ell} Q_{2^{d-2}-\ell}
	\end{align*}
	where the terms $X^i Q^{2^{d-1}+\ell-2i}$ vanish: for $i < \ell/2$ since $Q^{2^{d-2}+\ell-i} = 0$ by Proposition~\ref{sl2property}(iii), and for $\ell/2 \le i < \ell$ since for $v \in Q^{2^{d-2}+\ell-2i}$ by Proposition~\ref{sl2property}(ii) $X^i v = 0$. By Lemma~\ref{lemma:hwvhodge} we have $\dim U_{2^{d-2}-\ell} = \dim U_{2^{d-2}+\ell}$ and the statement follows.
\end{proof}

\begin{proof}[Proof of Theorem~\ref{th:main}]
By Corollary~\ref{cor:hlp} the map $U_{m+2i} \xrightarrow{X} U_{m+2(i+1)}$ is injective for $n < (2^{d-1}-m)/2$ and surjective for $n \ge (2^{d-1}-m)/2$. Note that for a partition $\lambda = \lambda^{(i)}$ with at most two columns $k$-complementary condition always holds: $2 \times 2^{d-2} - \lambda' = \lambda' + 2 \times (2^{d-2}-m)$.
Therefore, by Proposition~\ref{proposition:main-uni} the sequence is symmetric and unimodal.
\end{proof}

\begin{remark}
We can similarly define the operators $X,Y,H$ for $k = 4$ using $\omega_{d,4}$. However, the commutation relation $[X,Y] = H$ from \eqref{relations} does {\it not} hold in this case, i.e. these operators do not form an $\mathfrak{sl}(2)$ representation. For instance, if we take $T = \{ (111),(222) \} \subseteq [4]^3$ then 
$$[X,Y](e_T) = 148 e_{\{(111),(222)\}} + 4 e_{\{(112),(221)\}} + 4 e_{\{(121),(212)\}} - 4 e_{\{(122),(211)\}}
$$ 
and so the commutator does not rescale this vector. 
\end{remark}

\begin{remark}
	Another implication of $\mathfrak{sl}(2)$ structure on $\bigwedge V$ with the element $\omega_{d,2}$ is the following unimodality of magic sets in the cube $[2]^d$. Let $b_d(n) := |B_{d,2}(n)|$ be the number of magic sets of magnitude $n$ in $[2]^d$ (see Section \ref{subsection:magicsets}). Then the sequence $\{ b_{d}(n) \}_{n=0,\ldots,2^{d-1}}$ is unimodal (and symmetric) since the space of weight vectors $\bigoplus_{n} \mathrm{WV}_{(2 \times n)^d} \bigwedge^{2n} V$ is preserved by the $\mathfrak{sl}(2)$ triplet $(X,Y,H)$.
\end{remark}

\section{More general LP does not hold}\label{sec:neg}

We now prove that the Lefschetz property $\mathrm{LP}_{d, k}$ (defined in \textsection\ref{introlp}) does not hold for $k > 2$.

\begin{theorem}\label{th:neg}
	Let $d \ge 3$ be odd and $k > 2$. Then the property $\mathrm{LP}_{d, k}$ does not hold.
\end{theorem}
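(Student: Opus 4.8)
The plan is to disprove $\mathrm{LP}_{d,k}$ for every $k \ge 3$ (recall $d\ge 3$ is odd) by exhibiting a single degree $n$ at which the map $L\colon\bigwedge^n V\to\bigwedge^{n+k}V$, $v\mapsto\omega\wedge v$, fails to have full rank, and to do so via a weight-space obstruction to surjectivity. Let $T\subset\mathrm{GL}(k)^{\times d}$ be the diagonal torus. Each basis vector $e_X$ of $\bigwedge V$ is a $T$-weight vector of weight $(s_1(X),\dots,s_d(X))$, all of whose coordinates are $\ge 0$, and $\omega$ is a $T$-weight vector of weight $(k\times 1)^d$, every coordinate of which equals $1$ (Theorem~\ref{th:omega}(i)). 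Since $T$ acts multiplicatively on exterior products, $L$ carries the weight-$\pmb\mu$ subspace of $\bigwedge^n V$ into the weight-$(\pmb\mu+(k\times 1)^d)$ subspace of $\bigwedge^{n+k}V$; as every weight $\pmb\mu$ occurring in $\bigwedge^n V$ has nonnegative coordinates, $\mathrm{im}(L)$ lies in the span of those weight spaces of $\bigwedge^{n+k}V$ all of whose coordinates are $\ge 1$. In particular, if $\bigwedge^{n+k}V$ carries a nonzero weight vector one of whose weight coordinates vanishes, then $L$ is not surjective.

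Next I would choose the degree. Set $n:=k^{d-1}(k-1)-k$, so that $M:=n+k=k^d-k^{d-1}$; note $0\le n\le k^d-k$, so $n$ is admissible in $\mathrm{LP}_{d,k}$. The subset $X_0:=\{x\in[k]^d:x_1\neq 1\}$ has exactly $M$ elements, hence $e_{X_0}$ is a nonzero vector of $\bigwedge^M V$ whose weight has first coordinate $0$ in direction $1$; by the previous paragraph $L\colon\bigwedge^n V\to\bigwedge^M V$ is therefore not surjective. On the other hand $\dim\bigwedge^n V=\binom{k^d}{n}\ge\binom{k^d}{M}=\dim\bigwedge^M V$, because $n+M=2k^d-2k^{d-1}-k\ge k^d$, equivalently $k^{d-2}(k-2)\ge 1$, which holds for all $k\ge 3$ and $d\ge 3$ (the left side is then at least $3$). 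Consequently ``full rank'' of $L$ at degree $n$ would mean surjectivity, contradicting the non-surjectivity just established, so $\mathrm{LP}_{d,k}$ fails.

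I do not anticipate a serious obstacle: the only point requiring care is arranging the two competing constraints on $n$ — that we have already passed the degree where $\dim\bigwedge^n V\ge\dim\bigwedge^{n+k}V$, yet still $n+k\le k^d-k^{d-1}$, so that an entire slice of $[k]^d$ can be omitted by a subset of size $n+k$ — and the elementary estimate above shows the stated $n$ meets both whenever $k\ge 3$. As a remark, when $k$ is odd there is an even shorter route: by Theorem~\ref{th:omega}(ii) one has $\omega^2=0$, so at degree $n=k$ the map $L\colon\bigwedge^k V\to\bigwedge^{2k}V$ annihilates the nonzero vector $\omega$ and hence is not injective, while $\dim\bigwedge^k V\le\dim\bigwedge^{2k}V$ (as $3k\le k^d$) forces full rank to mean injectivity — again a contradiction.
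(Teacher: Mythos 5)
Your proof is correct, and it takes a route dual to the paper's. The paper works below the middle degree: it exhibits an explicit kernel element, namely $v=\bigwedge_{\mathbf{i}\in[k]^{d-1}}e_{1\mathbf{i}}$ (the full first slice in direction $1$, of degree $k^{d-1}$), and shows $\omega\wedge v=0$ because every term of the expansion of $\omega$ contains some $e_{\pi(j)}$ with $\pi_1(j)=1$, which already occurs in $v$; since $k^{d-1}\le(k^d-k)/2$ exactly when $k^{d-2}(k-2)\ge 1$, full rank would force injectivity there, a contradiction. You instead work above the middle degree and kill surjectivity by a pure torus-weight argument: the image of $L$ only meets weight spaces with all coordinates $\ge 1$, whereas $e_{X_0}$ with $X_0=[k]^d\setminus\{x_1=1\}$ has a vanishing weight coordinate, and your dimension count $n+M\ge k^d$ reduces to the same inequality $k^{d-2}(k-2)\ge 1$. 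The two witnesses are Hodge duals of one another ($e_{X_0}=\pm\star v$), and indeed the adjoint of $L$ under $\star$ is contraction with $\omega^*$, so the failures of injectivity and surjectivity you and the paper exhibit are two faces of the same phenomenon. What your version buys is that it never needs the explicit expansion of $\omega$ (only its weight); what the paper's version buys is that its kernel vector is a highest weight vector, which feeds directly into the subsequent remarks about the refined property $\mathrm{LP}_{\pmb{\lambda},k}$. Your closing remark on odd $k$ via $\omega^2=0$ matches the paper's own observation (iii) following the theorem.
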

\begin{proof}
    By unimodality of the sequence $\{\dim\bigwedge^n V\}_{n \in [0,k^d]} = \{\binom{k^d}{n}\}_{n\in [0,k^d]}$ the map 
    $$L: \bigwedge^n V \to \bigwedge^{n + k}V$$
    must be injective for $n \le (k^{d}-k)/2$ and surjective afterwards. 
	We will show that the map $L$ has a non-empty kernel for some $n < (k^{d}-k)/2$. 
	
	Consider the following vector 
	$$v := \bigwedge_{\mathbf{i} \in [k]^{d-1}} e_{1\mathbf{i}}$$
	which is a basis vector corresponding to the set of all entries $\mathbf{i}$ of the cube $[k]^d$ that lie in the first (or any) slice of the first direction. Note that $v$ is a $\mathrm{GL}(k)^{\times d}$ highest weight vector, since it nulls under the action of corresponding raising operators (see e.g. for a similar argument \cite{imw17}). Then 
	$$
		L(v) = \omega \wedge v = \sum_{\pi = (\pi_1,\ldots,\pi_d) \in (S_k)^d} \sgn(\pi)\bigwedge_{i=1}^k e_{\pi(i)} \wedge v = 0
	$$
	since for some $j$ we have $\pi_1(j) = 1$ and $e_{\pi(j)} \wedge v = 0$.
\end{proof}
\begin{remark}
    We can construct many other highest weight vectors like $v$ by appending the highest weight vectors to the slices $2,\ldots,k$ in the first direction; all of them will null under $L$ by the same reason.
\end{remark}

\begin{remark}
Let us now make a few remarks on the related property $\mathrm{LP}_{\pmb{\lambda},k}$.
\begin{itemize}
    \item[(i)] The vector $v$ in the proof above has weight $\pmb{\lambda}' = (1 \times k^{d-1}, (k \times k^{d-2})^{d-1})$. Then $g(\pmb{\lambda}) = 1$ and the property $\mathrm{LP}_{\pmb{\lambda},k}$ gives unimodality of the sequence with only element.

    \item[(ii)] Choose $n = \lfloor\frac{d-1}{d}k\rfloor + 1$. 
    Let us take $\pmb{\lambda} = (n^{d-1} \times n)^d$ and let $u = e_{[n]^d} = \bigwedge_{\mathbf{i} \in [n]^d} e_{\mathbf{i}}$. Then by analogous reasoning $u \in \mathrm{HWV}_{(\pmb{\lambda})'}\bigwedge V$.  
    Note that
    $$
        L(v) = \omega \wedge v = \sum_{\pi = (\pi_1,\ldots,\pi_d) \in (S_k)^d} \sgn(\pi)\bigwedge_{i=1}^k e_{\pi(i)} \wedge v = 0,
    $$
    since for each $\pi \in (S_k)^d$ there is $i \in [k]$ with $\pi(i) \in [n]^d$;
    indeed, the $d$ permutations $\pi_1, \ldots, \pi_d$ in $\pi$ have $d(k-n)$ numbers larger than $n$, and since $d(k-n) < k$ there must be $i \in [k]$ for which $\pi(i) = (\pi_1(i), \ldots, \pi_d(i)) \in [n]^d$. 
    Hence, $L(v) = 0$, but the property $\mathrm{LP}_{\pmb{\lambda}, k}$ will still hold if we have the corresponding sequence $\{g(\rho_k^\ell\, \pmb{\lambda})\}_{\ell \ge 0} = \{1, 0, 0, \ldots\}$ (which holds e.g. for $k = 4$ and $n = 3$). 
    
    \item[(iii)] Finally, for odd $k$ the situation is different. For instance, the property $\mathrm{LP}_{\varnothing, k}$ does not hold, since $L(\omega) = \omega^2 = 0$ by Theorem~\ref{th:omega}(ii).
\end{itemize}
\end{remark}

\section{Stable injectivity of Lefschetz map}

\begin{theorem}\label{theorem:injectivity}
    Let $d \ge 3$ be odd and $V_n := (\mathbb{C}^n)^{\otimes d}$. For $\pmb{\lambda} \subseteq (k^{d-1} \times k)^d$, 
    if the map
    $$L_k: \mathrm{HWV}_{(\pmb{\lambda})'}\bigwedge V_k \longrightarrow \mathrm{HWV}_{(\rho_k \pmb{\lambda})'}\bigwedge V_k, \qquad L_k : v \longmapsto \omega_{d,k} \wedge v, $$ 
    is injective, then for each $\ell > k$, the map
    $$L_\ell: \mathrm{HWV}_{(\pmb{\lambda})'}\bigwedge V_\ell \longrightarrow \mathrm{HWV}_{(\rho_\ell \pmb{\lambda})'}\bigwedge V_\ell, \qquad L_{\ell} : v \longmapsto \omega_{d,\ell} \wedge v, $$
    is also injective.
\end{theorem}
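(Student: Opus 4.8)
The plan is to reduce everything to the single step $\ell=k+1$ and then iterate: once we prove the implication ``$L_{k'}$ injective $\Longrightarrow L_{k'+1}$ injective'' for every $k'\ge k$ (the only feature of $\pmb{\lambda}$ used being that each $\lambda^{(t)}$ has at most $k\le k'$ columns), chaining it from $k$ up to $\ell$ yields the theorem. So assume $L_k$ is injective; we will construct a commutative square
\[
\begin{CD}
\mathrm{HWV}_{\pmb{\lambda}'}\bigwedge V_{k+1} @>{L_{k+1}}>> \mathrm{HWV}_{(\rho_{k+1}\pmb{\lambda})'}\bigwedge V_{k+1}\\
@A{\psi}AA @VV{\Phi}V\\
\mathrm{HWV}_{\pmb{\lambda}'}\bigwedge V_{k} @>{c\,L_{k}}>> \mathrm{HWV}_{(\rho_{k}\pmb{\lambda})'}\bigwedge V_{k}
\end{CD}
\]
with $\psi$ an isomorphism and $c\neq 0$. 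Granting this, $\operatorname{rank}(L_{k+1})\ge\operatorname{rank}(\Phi\circ L_{k+1}\circ\psi)=\operatorname{rank}(c\,L_k)=\operatorname{rank}(L_k)=\dim\mathrm{HWV}_{\pmb{\lambda}'}\bigwedge V_k=\dim\mathrm{HWV}_{\pmb{\lambda}'}\bigwedge V_{k+1}$, the last equality because $\psi$ is an isomorphism, which forces $L_{k+1}$ to be injective.

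For $\psi$ I would take the map induced by the inclusion $V_k\hookrightarrow V_{k+1}$ (it sends $e_X$ with $X\subseteq[k]^d$ to itself). Because each $\lambda^{(t)}$ fits in a box of width $k$, the weight $(\lambda^{(t)})'$ has at most $k$ parts, hence vanishing $(k+1)$-st coordinate; therefore any $\mathrm{GL}(k+1)^{\times d}$-highest weight vector of weight $\pmb{\lambda}'$ uses no index equal to $k+1$ in any direction, already lies in $\bigwedge V_k$, and is a $\mathrm{GL}(k)^{\times d}$-highest weight vector there. Conversely any $\mathrm{GL}(k)^{\times d}$-highest weight vector $v\in\bigwedge V_k$ of weight $\pmb{\lambda}'$ is automatically killed by the remaining raising operators $E^{(t)}_{k,k+1}$: otherwise $E^{(t)}_{k,k+1}v$ would lie in a weight space whose $(k+1)$-st coordinate is $-1$, which is zero since $\bigwedge V_{k+1}$ is a polynomial representation. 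Thus $\psi$ is a bijection onto the source of $L_{k+1}$; the same argument applied to $(\rho_k\pmb{\lambda})'$ (still a partition with at most $k$ parts in each component) identifies $\mathrm{HWV}_{(\rho_k\pmb{\lambda})'}\bigwedge V_k$ with $\mathrm{HWV}_{(\rho_k\pmb{\lambda})'}\bigwedge V_{k+1}$.

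For $\Phi$ I would take the interior product $\iota_{\varepsilon}$ with the decomposable covector $\varepsilon:=(e^*_{k+1})^{\otimes d}\in V_{k+1}^*$; concretely $\iota_\varepsilon$ removes the corner index $\mathbf{c}:=(k+1,\dots,k+1)$ (with a sign) from those $e_X$ that contain it and annihilates the others. Since $\varepsilon$ is a highest weight vector of $V_{k+1}^*$, the operator $\iota_\varepsilon$ carries $\mathrm{GL}(k+1)^{\times d}$-highest weight vectors to $\mathrm{GL}(k+1)^{\times d}$-highest weight vectors, shifting the weight by $\operatorname{wt}(\varepsilon)$, which has $-1$ in the $(k+1)$-st coordinate of every component and $0$ elsewhere. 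A vector of $\mathrm{HWV}_{(\rho_{k+1}\pmb{\lambda})'}\bigwedge V_{k+1}$ has $(k+1)$-st weight coordinate $1$ in every direction (the first part of $\rho_{k+1}\lambda^{(t)}$ is its unique part $\ge k+1$), so its image has that coordinate $0$, lies in $\bigwedge V_k$ by the previous paragraph, and is a $\mathrm{GL}(k)^{\times d}$-highest weight vector of weight $(\rho_k\pmb{\lambda})'$; thus $\Phi$ is well defined as drawn. Now for $v\in\bigwedge V_k$ we have $\iota_\varepsilon v=0$ (no corner index occurs), so by the antiderivation property $\Phi(\omega_{d,k+1}\wedge v)=(\iota_\varepsilon\omega_{d,k+1})\wedge v$, and it remains to identify $\iota_\varepsilon\omega_{d,k+1}$. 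Expanding $\omega_{d,k+1}$ over $(S_{k+1})^{d}$, the summand indexed by $\pi=(\pi_1,\dots,\pi_d)$ survives $\iota_\varepsilon$ precisely when $\pi_1^{-1}(k+1)=\dots=\pi_d^{-1}(k+1)=:i_0$; deleting that wedge slot and relabelling its domain $[k+1]\setminus\{i_0\}$ by $[k]$ turns the remaining wedge into the $\sigma$-summand of $\omega_{d,k}$ for the corresponding $\sigma\in(S_k)^d$, and the sign $(-1)^{i_0-1}$ from the contraction together with the sign $(-1)^{d(k+1-i_0)}$ from the $d$ permutations combine — here oddness of $d$ is used — to $(-1)^{k}$, independently of $i_0$. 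Summing over the $k+1$ positions $i_0$ and over $\sigma$ yields $\iota_\varepsilon\omega_{d,k+1}=(-1)^{k}(k+1)\,\omega_{d,k}$, so $c=(-1)^k(k+1)\neq 0$ and the square commutes.

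I expect the sign computation for $\iota_\varepsilon\omega_{d,k+1}$ to be the delicate step: one must check that the $k+1$ possible positions of the corner index $\mathbf c$ inside a wedge contribute with one common sign rather than cancelling, and this is exactly where the hypothesis that $d$ is odd enters (for even $d$ the contributions partially alternate).
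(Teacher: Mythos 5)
Your proof is correct, but it takes a genuinely different route from the paper's. The paper's argument rests on Lemma~\ref{lemma:injectivity}, which goes from $k$ to an arbitrary $\ell$ in a single step and applies to \emph{any} weight vector $v \in \bigwedge V_k$ with $\omega_{d,k}\wedge v \neq 0$: it expands $\omega_{d,\ell}\wedge v$ as a signed sum of terms $\omega_I\wedge\omega_{\overline I}\wedge v$ over $d$-tuples $I$ of $k$-subsets of $[\ell]$ with $I_1=[k]$, and shows these terms have pairwise disjoint supports because their marginals in some direction differ; the term with $I=[k]^d$ equals $\pm\,\omega_{d,k}\wedge\omega_{[k+1,\ell]^d}\wedge v\neq 0$, so no cancellation occurs. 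You instead go one column at a time and intertwine $L_{k'+1}$ with $L_{k'}$ via the contraction $\iota_\varepsilon$ with $\varepsilon=(e^*_{k'+1})^{\otimes d}$, using the identity $\iota_\varepsilon\,\omega_{d,k'+1}=(-1)^{k'}(k'+1)\,\omega_{d,k'}$ together with the stability isomorphism $\mathrm{HWV}_{\pmb{\lambda}'}\bigwedge V_{k'}\cong \mathrm{HWV}_{\pmb{\lambda}'}\bigwedge V_{k'+1}$. Your sign computation is right (e.g.\ for $d=3$, $k'=1$ one gets $\iota_\varepsilon\,\omega_{3,2}=-2\,e_{111}=-2\,\omega_{3,1}$), and you correctly identify the use of oddness of $d$; note also that your chaining is legitimate because the only feature of $\pmb{\lambda}$ used at level $k'$ is that it fits in a box of width $k'\ge k$. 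Comparing the two: the paper's support-disjointness argument avoids induction and sign bookkeeping and needs no highest-weight structure at all, while your contraction identity gives an explicit partial left inverse of the Lefschetz map and, since $\iota_\varepsilon(\omega_{d,k'+1}\wedge v)=(-1)^{k'}(k'+1)\,\omega_{d,k'}\wedge v$ holds for every $v\in\bigwedge V_{k'}$, it in fact recovers the full strength of Lemma~\ref{lemma:injectivity} by iteration.
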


Let $\ell > k$. There is a natural inclusion $V_k \subseteq V_{\ell}$ described as follows: if $V_k = \langle e'_{\mathbf{i}} \rangle_{\mathbf{i} \in [k]^d}$ and $V_\ell = \langle e_{\mathbf{i}} \rangle_{\mathbf{i} \in [\ell]^d}$ (here $e'$, $e$ are standard bases) then we map $e'_{\mathbf{i}} \mapsto e_{\mathbf{i}}$ by associating the cube $[k]^d$ with the corresponding subcube of $[\ell]^d$. This also induces inclusion $\bigwedge V_k \subseteq \bigwedge V_{\ell}$. For $v \in V_k$ we will write $v \in V_\ell$ according to described inclusion. In particular, if $v \in \mathrm{HWV} \bigwedge V_k$ then $v \in \mathrm{HWV} \bigwedge V_\ell$, due to induced inclusion of the groups $\mathrm{GL}(k)^{\times d} \to \mathrm{GL}(\ell)^{\times d}$, described by appending ones on the main diagonal to each matrix.

Let $I = (I_1,\ldots,I_d)$ be a $d$-tuple of $k$-sets $I_j =  \{x_{j,1} < \ldots < x_{j,k}\} \in \binom{[\ell]}{k} := \{X : X \subseteq [\ell], |X| = k \}$. Denote by $\omega_I \in \bigwedge V_\ell$ the image of $\omega_{d,k}$ under the map 
$$
e_{i_1} \otimes \ldots \otimes e_{i_d} \mapsto e_{x_{1,i_1}} \otimes \ldots \otimes e_{x_{d,i_d}},
$$
in other words, we associate the cube $[k]^d$ where $\omega_{d,k} \in \bigwedge V_k$ as a subcube of length $k$ in $[\ell]^d$ defined by indices from $I$.

\begin{lemma}\label{lemma:injectivity}
    Let $d \ge 3$ be odd and $v \in \mathrm{WV}_{\pmb{\lambda}} \bigwedge V_k$. Assume that $\omega_{d,k} \wedge v \neq 0$. Then for all $\ell > k$ we also have
    $\omega_{d,\ell} \wedge v \neq 0$. 
\end{lemma}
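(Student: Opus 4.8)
The plan is to reduce $\omega_{d,\ell}\wedge v \ne 0$ to the given hypothesis $\omega_{d,k}\wedge v \ne 0$ by extracting a suitable monomial from the expansion of $\omega_{d,\ell}$ that "sees" $v$ inside the subcube $[k]^d$. First I would fix a basis expansion $v = \sum_X c_X e_X$ over subsets $X \subseteq [k]^d$, so that $\omega_{d,k}\wedge v = \sum_{\pi\in(S_k)^d}\sgn(\pi)\sum_X c_X\, e_{\pi([k])}\wedge e_X$, where $\pi([k]) = \{(\pi_1(i),\dots,\pi_d(i)) : i\in[k]\}$. The hypothesis says this sum is nonzero, hence some basis vector $e_Y$ (with $Y = \pi([k]) \sqcup X$, a disjoint union inside $[k]^d$) appears with nonzero coefficient.

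Next I would expand $\omega_{d,\ell} = \sum_{\pi\in(S_\ell)^d}\sgn(\pi)\bigwedge_{i=1}^\ell e_{\pi(i)}$ and observe that the key summands are those where the image $\pi([\ell]) = \{(\pi_1(i),\dots,\pi_d(i)):i\in[\ell]\}$, restricted to the coordinates needed to wedge against $v$, lands inside $[k]^d$. The cleanest way is to use the block structure: write $\omega_{d,\ell}$ grouped by which $d$-set $I = (I_1,\dots,I_d)$ of coordinate values the "first $k$ columns" of $\pi$ use; the contribution of the group with $I = ([k],\dots,[k])$ is exactly (up to sign and a factor from the complementary columns) $\omega_{d,k}\wedge \omega'$, where $\omega'$ is built from directions $k+1,\dots,\ell$ and involves only basis vectors $e_{\mathbf j}$ with every coordinate exceeding $k$ in at least one of a controlled set of slots — in particular each such $e_{\mathbf j}$ is disjoint (as a point of $[\ell]^d$) from the subcube $[k]^d$ supporting $v$. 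Therefore $\omega_{d,\ell}\wedge v = \sum_I \pm\, \omega_I \wedge v$, and the $I = ([k]^d)$ term equals $\pm\,\omega_{d,k}\wedge v \wedge \omega'$ up to sign, while $\omega'$ is a nonzero wedge of standard basis vectors on disjoint indices, so this whole term is a nonzero multiple of a basis vector involving coordinates all $>k$ wedged with $e_Y$.

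The main obstacle, and the step requiring care, is \emph{cancellation between the different $\omega_I$ terms}: a priori, a basis vector $e_Z$ of $\bigwedge V_\ell$ could receive contributions from several $I$, and these might cancel. To handle this I would pick out the distinguished basis vector $e_Z$ with $Z = Y \sqcup W$, where $W$ is the "staircase" support of $\omega'$ (the one, say lexicographically extremal, set of points of $[\ell]^d\setminus[k]^d$ forced by the complementary columns in the $I = ([k],\dots,[k])$ block). The point is that for $e_Z$ to appear from the block indexed by some other $I$, one would need $Z\setminus Y'$ (for the corresponding sub-decomposition $Z = Y'\sqcup W'$) to still contain $Y$, forcing $I_j \supseteq [k]$ in each direction $j$ where $v$ is supported; combined with the margin constraints ($v\in\mathrm{WV}_{\pmb\lambda}$ fixes the multiset of coordinates used in each direction) this pins down $I = ([k],\dots,[k])$ as the unique block contributing to $e_Z$. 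Hence the coefficient of $e_Z$ in $\omega_{d,\ell}\wedge v$ equals $\pm$ the coefficient of $e_Y$ in $\omega_{d,k}\wedge v$, which is nonzero. So $\omega_{d,\ell}\wedge v\ne0$, completing the proof; Theorem~\ref{theorem:injectivity} then follows immediately by linearity, applying the lemma to any nonzero $v\in\mathrm{HWV}_{\pmb\lambda'}\bigwedge V_k$ in the kernel of $L_\ell$ to derive a contradiction with injectivity of $L_k$.
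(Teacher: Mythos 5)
Your plan is correct and follows essentially the same route as the paper: the Laplace-type decomposition $\omega_{d,\ell}=\sum_{I}\pm\,\omega_{I}\wedge\omega_{\overline{I}}$ over tuples $I$ of $k$-subsets with $I_1=[k]$, identification of the $I=([k],\ldots,[k])$ block with $\pm\,(\omega_{d,k}\wedge v)\wedge\omega_{[k+1,\ell]^d}$, and a no-cancellation argument resting on the fact that the points of a support set lying outside $[k]^d$ determine the block $I$. The only (immaterial) difference is that the paper establishes pairwise disjointness of the supports of all blocks via their marginals, whereas you isolate a single distinguished basis vector $e_{Y\sqcup W}$; both hinge on the same observation.
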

\begin{proof}
    We can decompose 
    \begin{align}\label{eq:decompose}
		\omega_{d,\ell} \wedge v = \sum_{I = (I_1,\ldots,I_d) \in \binom{[\ell]}{k}^d, I_1 = [k]} \pm ~ \omega_{I} \wedge \omega_{\overline{I}} \wedge v
	\end{align}
	where $\overline{I} := ([\ell] \backslash I_1,\ldots,[\ell] \backslash I_d)$.
	We will show that the vectors $\{\omega_{I}\wedge\omega_{\overline{I}}\wedge v : I \in \binom{[\ell]}{k}^d, I_1 = [k]\}$ are linearly independent. Further, since for $I = [k]^d$ we have $\omega_{I} = \omega_{d,k}$, then $\omega_I \wedge \omega_{\overline{I}} \wedge v \neq 0$ implies $\omega_{d,\ell} \wedge v \neq 0$.

For $u \in \bigwedge V_{\ell}$ denote $\mathrm{supp}(u) := \{ T: T \subseteq [\ell]^d, \langle e_T, u \rangle \neq 0 \}$, i.e. the set of indices of coefficients in the standard basis expansion of the vector $u$.
	
	To show the linear independence, we show a stronger statement: the sets $\{\mathrm{supp}(\omega_{I}\wedge\omega_{\overline{I}}\wedge v) : I \in \binom{[\ell]}{k}^d, I_1 = [k]\}$ are pairwise disjoint. 
    Let $I \neq J$ with $I_a \neq J_a$ for some $a\in [d]$.
	We have $\mathrm{supp}(\omega_I\wedge \omega_{\overline{I}} \wedge v) \cap \mathrm{supp}(\omega_J\wedge \omega_{\overline{J}} \wedge v) = \varnothing$, 
	since their elements have distinct marginals in the $a$-th direction: the elements of the first family have common marginals equal to $\lambda^{(a)} + \delta_{I_a}$ and the elements of the second family -- equal to $\lambda^{(a)} + \delta_{J_a}$, where $\delta_I$ denotes an $\ell$-vector with ones in positions from $I$ and $0$ elsewhere.

	Since the supports are mutually disjoint, the vectors in the sum \eqref{eq:decompose} are linearly independent. Hence, $\omega_{d,k} \wedge  v \neq 0$ implies $\omega_{[k]^d} \wedge \omega_{[k+1,\ell]^d} \wedge v \neq 0$ which was shown to imply $\omega_{d,\ell} \wedge v \neq 0$.
\end{proof}

\begin{proof}[Proof of Theorem~\ref{theorem:injectivity}]
	Note that if $v \in \mathrm{HWV}_{(\pmb{\lambda})'}\bigwedge V_{\ell}$, then we have $v \in \mathrm{HWV}_{(\pmb{\lambda})'}\bigwedge V_k$, since $\pmb{\lambda} \subseteq (k^{d-1} \times k)^d$. From the injectivity of the map $L_k$, for $v \in \mathrm{HWV}_{(\pmb{\lambda})'}\bigwedge V_k$ we have $L_k(v) = \omega_{d,k} \wedge v \neq 0$. 
	Thus, by Lemma~\ref{lemma:injectivity} we have $L_{\ell}(v) = \omega_{d,\ell} \wedge v \neq 0$, which implies injectivity of $L_\ell$.
\end{proof}

One implication of Theorem~\ref{theorem:injectivity} and the property $\mathrm{LP}_{d,2}$ is the following.
\begin{corollary}
    Let $d \ge 3$ be odd and $\pmb{\lambda}$ be $d$-tuple of partitions of $m \le 2^{d-1}$ with parts at most $2$. Then for all $\ell > 2$ the map $L:  \mathrm{HWV}_{(\pmb{\lambda})'}\bigwedge V_{\ell} \to \mathrm{HWV}_{(\rho_\ell \pmb{\lambda})'}\bigwedge V_{\ell}$ is injective.
    In particular, $g(\pmb{\lambda}) \le g(\rho_\ell \pmb{\lambda})$.
\end{corollary}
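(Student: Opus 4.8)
The plan is to reduce to the case $\ell=2$ using the stable-injectivity theorem (Theorem~\ref{theorem:injectivity}) and then invoke the Lefschetz property $\mathrm{LP}_{d,2}$. First I would verify the hypothesis of Theorem~\ref{theorem:injectivity} with $k=2$: each $\lambda^{(i)}$ has all parts at most $2$ and length at most $m\le 2^{d-1}$, so $\lambda^{(i)}\subseteq 2^{d-1}\times 2$ and hence $\pmb{\lambda}\subseteq(2^{d-1}\times 2)^d$. Thus it suffices to show that the map
$$
L_2:\ \mathrm{HWV}_{(\pmb{\lambda})'}\bigwedge V_2\ \longrightarrow\ \mathrm{HWV}_{(\rho_2\pmb{\lambda})'}\bigwedge V_2,\qquad v\longmapsto\omega_{d,2}\wedge v,
$$
is injective, for then Theorem~\ref{theorem:injectivity} yields injectivity of $L_\ell$ for every $\ell>2$.

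For the base case I would use $\mathrm{LP}_{d,2}$, i.e. the $\mathfrak{sl}(2)$ Lefschetz decomposition of $\bigwedge V_2$ (Proposition~\ref{sl2property}): the map $L:\bigwedge^{n}V_2\to\bigwedge^{n+2}V_2$, $v\mapsto\omega_{d,2}\wedge v$, has full rank for all $n$. Since $\bigwedge V_2$ is concentrated in degrees $0,\dots,2^d$, unimodality of $\{\binom{2^d}{n}\}$ shows that for $m<2^{d-1}$ this map is injective on all of $\bigwedge^{m}V_2$. By Lemma~\ref{lemma:hwvalg}(1) it carries $\mathrm{HWV}_{(\pmb{\lambda})'}\bigwedge^{m}V_2$ into $\mathrm{HWV}_{(\rho_2\pmb{\lambda})'}\bigwedge^{m+2}V_2$, so restricting the injective map to this subspace gives the injectivity of $L_2$.

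The numerical consequence is then immediate from the dimension formula~\eqref{eq:krondim}: since both $\pmb{\lambda}$ and $\rho_\ell\pmb{\lambda}$ have parts at most $\ell$, we have $\dim\mathrm{HWV}_{(\pmb{\lambda})'}\bigwedge V_\ell=g(\pmb{\lambda})$ and $\dim\mathrm{HWV}_{(\rho_\ell\pmb{\lambda})'}\bigwedge V_\ell=g(\rho_\ell\pmb{\lambda})$, and an injective linear map forces $g(\pmb{\lambda})\le g(\rho_\ell\pmb{\lambda})$. The point I expect to require the most care is the boundary value $m=2^{d-1}$: there the global map $\bigwedge^{2^{d-1}}V_2\to\bigwedge^{2^{d-1}+2}V_2$ is only surjective, so one cannot simply restrict it, and the relevant weight space may contain primitive classes annihilated by $\omega_{d,2}$; this case would need a separate treatment, e.g. by passing directly to $V_\ell$ and exhibiting a non-vanishing standard-basis coefficient of $\omega_{d,\ell}\wedge v$. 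Apart from that, everything is routine bookkeeping resting on Theorem~\ref{theorem:injectivity} and the $\mathfrak{sl}(2)$ structure from Section~\ref{sec:sl2}.
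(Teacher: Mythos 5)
Your argument is the paper's own: check $\pmb{\lambda}\subseteq(2^{d-1}\times 2)^d$, reduce to $k=2$ via Theorem~\ref{theorem:injectivity}, and obtain the base case from the $\mathfrak{sl}(2)$/Lefschetz structure of Section~\ref{sec:sl2} restricted to highest weight spaces via Lemma~\ref{lemma:hwvalg}; for $m<2^{d-1}$ this is complete and matches the paper's (terse) derivation.

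The boundary case $m=2^{d-1}$ that you flag is a genuine issue --- and it is one the paper's one-line justification shares. At middle degree the map $L_2$ on $U_{2^{d-1}}=\mathrm{HWV}_{(\pmb{\lambda})'}\bigwedge^{2^{d-1}}V_2$ has kernel exactly the primitive space $Q^{2^{d-1}}$, whose dimension is $g(\pmb{\lambda})-g(\rho_2\,\pmb{\lambda})$; this is nonzero whenever the symmetric sequence has a strict peak at the middle, e.g. for $d=5$ and $\pmb{\lambda}=(8\times 2)^5$ one has $g_5(8,2)=166>130=g_5(9,2)$. So the hypothesis of Theorem~\ref{theorem:injectivity} with $k=2$ genuinely fails there, and $\mathrm{LP}_{d,2}$ only gives surjectivity, not injectivity, of $\bigwedge^{2^{d-1}}V_2\to\bigwedge^{2^{d-1}+2}V_2$. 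Your proposed remedy (exhibiting a nonvanishing standard-basis coefficient of $\omega_{d,\ell}\wedge v$ directly in $V_\ell$) is a plan rather than a proof, so as written your argument, like the paper's, establishes the corollary only for $m<2^{d-1}$; the case $m=2^{d-1}$ needs a separate argument or the statement should be restricted.
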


\section{Positivity of rectangular Kronecker coefficients}
In this section we prove positivity of Kronecker coefficients $g_d(n,k)$ stated in Theorem~\ref{th:posafter}.

Recall that in particular, the established Lefschetz property for $k = 2$ implies that $\mathrm{AT}_d(2) \ne 0$. In \cite{ay22} we  showed that $\mathrm{AT}_d(2) > 0$ via signs of Latin hypercubes, and moreover, it can  be shown that every partial Latin hypercube in $[2]^d$ is positive.

The notions related to Latin hypercubes can be recalled in \textsection\ref{subsection:magicsets}.
We now define an operation on a magic sets to construct larger magic sets.
\begin{definition}
 The \textit{direct sum} $\oplus$ of magic sets $T_1$ and $T_2$ both of magnitude 
 $n$ in the cubes $[k_1]^d$ and $[k_2]^d$ respectively, is a magic set given by  
$$
	T_1 \oplus T_2 = T_1 \cup \{(a_1+k_1,\ldots,a_d+k_1): (a_1,\ldots,a_d) \in T_2\}
$$
of magnitude $n$ in the cube $[k_1+k_2]^d$. 
\end{definition}
A useful property of this operation is the  following.
\begin{proposition}
	For $T_1 \in B_{d,k_1}(n)$ and $T_2 \in B_{d,k_2}(n)$
	we have $$\mathrm{AT}(T_1\oplus T_2) = \mathrm{AT}(T_1)\cdot\mathrm{AT}(T_1).$$
\end{proposition}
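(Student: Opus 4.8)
The plan is to show that the direct sum construction induces a sign-preserving bijection between partial Latin hypercubes on $T_1 \oplus T_2$ and pairs of partial Latin hypercubes on $T_1$ and $T_2$, from which the multiplicativity $\mathrm{AT}(T_1 \oplus T_2) = \mathrm{AT}(T_1)\cdot\mathrm{AT}(T_2)$ follows at once. (The right-hand side of the statement should of course read $\mathrm{AT}(T_1)\cdot\mathrm{AT}(T_2)$.)

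First I would record the key structural observation: every point of $T_1 \oplus T_2 \subseteq [k_1+k_2]^d$ has either all coordinates in $[k_1]$, in which case it lies in $T_1$, or all coordinates in $\{k_1+1,\ldots,k_1+k_2\}$, in which case it is the shift of a point of $T_2$; there are no ``mixed'' points. Consequently, for a slice $S$ of $[k_1+k_2]^d$ in direction $\ell$ with index $i$: if $i \le k_1$ then $S \cap (T_1 \oplus T_2)$ equals the corresponding slice of $[k_1]^d$ intersected with $T_1$, and if $i > k_1$ it equals the shift of the corresponding slice of $[k_2]^d$ intersected with $T_2$. In particular each slice of $[k_1+k_2]^d$ ``sees'' only $T_1$ or only $T_2$, and the $(k_1+k_2)d$ slices of $[k_1+k_2]^d$ partition into $k_1 d$ ``low'' slices matching the slices of $[k_1]^d$ bijectively and $k_2 d$ ``high'' slices matching the slices of $[k_2]^d$ bijectively.

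Next I would use this to set up the bijection $\mathcal{C}_d(T_1\oplus T_2) \cong \mathcal{C}_d(T_1)\times\mathcal{C}_d(T_2)$: a map $C$ on $T_1\oplus T_2$ is bijective on every slice if and only if its restriction $C_1$ to $T_1$ and its unshifted restriction $C_2$ to $T_2$ are each bijective on every slice, by the slice decomposition above; and any pair $(C_1,C_2)$ glues back to a unique such $C$. For the sign, the $S$-permutation attached to a low slice of $C$ is exactly the $S$-permutation of $C_1$ on the corresponding slice of $[k_1]^d$, and the $S$-permutation attached to a high slice of $C$ is exactly the $S$-permutation of $C_2$ on the corresponding slice of $[k_2]^d$. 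Hence $\sgn(C) = \sgn(C_1)\,\sgn(C_2)$, and summing over all $C$ factors as $\mathrm{AT}(T_1\oplus T_2) = \big(\sum_{C_1}\sgn(C_1)\big)\big(\sum_{C_2}\sgn(C_2)\big) = \mathrm{AT}(T_1)\,\mathrm{AT}(T_2)$.

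The only point requiring care — and the one I would state most explicitly — is the claim that reading values in lexicographic order is compatible with the inclusion $T_1 \hookrightarrow T_1 \oplus T_2$ and the shifted inclusion $T_2 \hookrightarrow T_1 \oplus T_2$. Both follow because lexicographic order on tuples is preserved under restriction to a subset and under a simultaneous shift of all coordinates by the same constant $k_1$; therefore the $S$-permutations match on the nose and no sign corrections appear when passing between $C$ and the pair $(C_1,C_2)$. Everything else is bookkeeping.
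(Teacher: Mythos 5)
Your proof is correct and takes essentially the same route as the paper's, which likewise observes that each slice of $[k_1+k_2]^d$ meets $T_1\oplus T_2$ inside only one of the two summands, so that every partial Latin hypercube on $T_1\oplus T_2$ splits into a pair on $T_1$ and $T_2$ with multiplicative sign. You merely make explicit the lexicographic-order compatibility that the paper leaves implicit, and you correctly note that the right-hand side of the statement should read $\mathrm{AT}(T_1)\cdot\mathrm{AT}(T_2)$.
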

\begin{proof}
Any partial Latin hypercube $L \in \mathcal{C}_d(T_1\oplus T_2)$ decomposes into a pair of Latin hypercubes $(L_1,L_2) \in \mathcal{C}_d(T_1) \times \mathcal{C}_d(T_2)$ with $\sgn(L) = \sgn(L_1)\,\sgn(L_2)$, since each slice of  $[k_1+k_2]^d$ intersects $T_1\oplus T_2$ either in $T_1$ or in $T_2$.
\end{proof}

\begin{theorem}
	For odd $d \ge 3$ and even $k$ we have $\omega^{2^{d-1}} \neq 0$.
\end{theorem}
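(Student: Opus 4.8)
The plan is to reduce the statement, via the monomial expansion of $\omega^{n}$ in Theorem~\ref{th:omega}(iii), to producing a single magic set of magnitude $2^{d-1}$ in $[k]^d$ whose Alon--Tarsi number is nonzero, and then to build such a set by taking a direct sum of $k/2$ copies of the full cube $[2]^d$. Concretely, Theorem~\ref{th:omega}(iii) gives
$$
\omega^{2^{d-1}} = \sum_{T \in B_{d,k}(2^{d-1})} \pm\, \mathrm{AT}(T)\, e_T ,
$$
and since the vectors $e_T$ are linearly independent, it suffices to exhibit one $T \in B_{d,k}(2^{d-1})$ with $\mathrm{AT}(T) \neq 0$; note there is no possibility of cancellation, since each $e_T$ occurs with the single coefficient $\pm \mathrm{AT}(T)$.

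For the construction, first observe that the full cube $[2]^d$ is itself a magic set of magnitude $2^{d-1}$ in $[2]^d$, because every slice of $[2]^d$ has $2^{d-1}$ elements; thus $[2]^d \in B_{d,2}(2^{d-1})$. Writing $k = 2r$ (using that $k$ is even), I would then take $T$ to be the $r$-fold direct sum $T := [2]^d \oplus \cdots \oplus [2]^d$. Since the direct sum of two magic sets of a common magnitude is again a magic set of that same magnitude (in a cube whose side length is the sum of the two side lengths), iterating this observation yields $T \in B_{d,2r}(2^{d-1}) = B_{d,k}(2^{d-1})$, which is precisely the bidegree occurring in the expansion above.

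Next I would apply the multiplicativity of $\mathrm{AT}$ under direct sums (the proposition proved immediately above, used $r-1$ times) to get $\mathrm{AT}(T) = \mathrm{AT}_d(2)^{r}$. By what is recalled at the start of this section, $\mathrm{AT}_d(2) \neq 0$ — this follows either from the explicit positivity of all partial Latin hypercubes in $[2]^d$ established in \cite{ay22}, or from the hard Lefschetz property $\mathrm{HLP}_{\varnothing,2}$ (proved in \textsection\ref{sec:sl2}) together with Proposition~\ref{proposition:main-at}. Hence $\mathrm{AT}(T) = \mathrm{AT}_d(2)^{r}$ is a nonzero integer, so the coefficient of $e_T$ in $\omega^{2^{d-1}}$ is nonzero and $\omega^{2^{d-1}} \neq 0$. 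As a byproduct, feeding this particular $T$ into the corollary of Theorem~\ref{th:omega} gives $g_d(i,k) > 0$ for all $i \le 2^{d-1}$, which is Theorem~\ref{th:posafter}.

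I do not expect a genuine obstacle here: the substantive inputs — the monomial formula for $\omega^{n}$ and the nonvanishing $\mathrm{AT}_d(2) \neq 0$ — are already in hand from the earlier sections, so the remaining work is purely bookkeeping. The only points requiring care are checking that the $r$-fold direct sum of $[2]^d$ has magnitude \emph{exactly} $2^{d-1}$ (rather than a larger value, so that it genuinely contributes to $\omega^{2^{d-1}}$ and not to some other power), and confirming that the direct sum is associative up to the obvious relabelling so that the multiplicativity of $\mathrm{AT}$ legitimately iterates to $\mathrm{AT}_d(2)^r$.
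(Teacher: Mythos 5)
Your proposal is correct and follows essentially the same route as the paper: take $T = ([2]^d)^{\oplus k/2} \in B_{d,k}(2^{d-1})$, apply the multiplicativity of $\mathrm{AT}$ under direct sums to get $\mathrm{AT}(T) = \mathrm{AT}_d(2)^{k/2} \neq 0$, and read off the nonvanishing coefficient of $e_T$ from the expansion in Theorem~\ref{th:omega}(iii). Your extra care about the magnitude being exactly $2^{d-1}$ and about iterating the direct sum is sound bookkeeping that the paper leaves implicit.
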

\begin{proof} 
	Let $T = ([2]^d)^{\oplus \frac{k}{2}} \in B_{d,k}(n)$. Then from Theorem~\ref{th:omega} the coefficient of $\omega^{2^{d-1}}$ at $e_T$ up to a sign is equal to 
	$$\pm\mathrm{AT}(T) = \pm\mathrm{AT}_d(2)^{k/2} \neq 0$$ since $\mathrm{AT}([2]^d) = \mathrm{AT}_d(2) \ne 0$. 
\end{proof}
\begin{corollary}
	For odd $d \ge 3$ and even $k$ we have $g_d(n, k) > 0$ for all $n \le 2^{d-1}$.
\end{corollary}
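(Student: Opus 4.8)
The plan is to derive this corollary directly from the preceding theorem, which asserts that $\omega^{2^{d-1}} \neq 0$ for odd $d \ge 3$ and even $k$. The key observation is that $\omega^{2^{d-1}}$ lives in $\bigwedge^{2^{d-1} k} V$ as a highest weight vector of weight $(k \times 2^{d-1})^d$, so its nonvanishing forces $g_d(2^{d-1}, k) = \dim \mathrm{HWV}_{(k \times 2^{d-1})^d}\bigwedge^{2^{d-1}k} V > 0$ via the identity \eqref{eq:krondim}. More precisely, $\omega \in \mathrm{HWV}_{(k \times 1)^d}\bigwedge^k V$ by Theorem~\ref{th:omega}(i), and then by the exterior product part of Lemma~\ref{lemma:hwvalg} the power $\omega^{2^{d-1}}$ is a highest weight vector of weight $(k \times 1)^d + \cdots + (k \times 1)^d = (k \times 2^{d-1})^d$ (added $2^{d-1}$ times). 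Since conjugation sends $k \times 2^{d-1}$ to $2^{d-1} \times k$, this gives $g(\pmb{\mu}) \ge 1$ where $\pmb{\mu} = ((2^{d-1} \times k)^d)$, i.e. $g_d(2^{d-1}, k) > 0$.

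The remaining step is to propagate this single positivity statement downward to all $n \le 2^{d-1}$. For this I would invoke the same Lefschetz-type monotonicity that underlies the earlier corollaries: since $\omega^{2^{d-1}} = \omega^{2^{d-1}-n} \wedge \omega^{n} \neq 0$, in particular $\omega^n \neq 0$ for every $n \le 2^{d-1}$, and $\omega^n$ is a nonzero highest weight vector of weight $(k \times n)^d$ by the same application of Lemma~\ref{lemma:hwvalg}. Conjugating the weight, this shows $g_d(n,k) = \dim \mathrm{HWV}_{(k \times n)^d}\bigwedge^{nk} V \ge 1$ for all $n \in \{0, 1, \ldots, 2^{d-1}\}$. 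This is exactly the content of the corollary of Theorem~\ref{th:omega} already recorded in the Preliminaries (positivity of $g_d(i,k)$ for $i \le n$ whenever $\omega^n \neq 0$, via $\mathrm{AT}(T) \neq 0$ for the relevant magic set $T$), applied with $n = 2^{d-1}$ and $T = ([2]^d)^{\oplus k/2}$.

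I do not expect a genuine obstacle here: the corollary is a formal consequence of the theorem preceding it together with Theorem~\ref{th:omega} and Lemma~\ref{lemma:hwvalg}. The only point requiring minor care is bookkeeping the weights and conjugation correctly — namely checking that a highest weight vector of weight $(k \times n)^d$ in $\bigwedge^{nk} V$ corresponds under \eqref{eq:krondim} to $g((n \times k)^d) = g_d(n,k)$ and not to some other Kronecker coefficient — but this is exactly the normalization already used throughout the paper (e.g. in the proof of Proposition~\ref{proposition:main-uni}), so no new idea is needed. One writes: by the theorem, $\omega^{2^{d-1}} \neq 0$, hence $\omega^n \neq 0$ for all $n \le 2^{d-1}$ (as a divisor of a nonzero exterior power); each such $\omega^n$ is a nonzero highest weight vector of weight $(k \times n)^d$, so $g_d(n,k) \ge 1$, completing the proof.
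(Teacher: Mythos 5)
Your argument is correct and is essentially the paper's own proof: from $\omega^{2^{d-1}}\neq 0$ one gets $\omega^{n}\neq 0$ for all $n\le 2^{d-1}$, and since $\omega^{n}$ is a highest weight vector of weight $(k\times n)^{d}$ in $\bigwedge^{nk}V$, the identity \eqref{eq:krondim} gives $g_d(n,k)\ge 1$. The extra weight/conjugation bookkeeping you supply is accurate and matches the paper's normalization.
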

\begin{proof}
	Since $\omega^{2^{d-1}} \neq 0$ then $\omega^{n} \neq 0$ for all $n \le 2^{d-1}$. But 
	$$g_d(n,k) = \dim \mathrm{HWV}_{(k\times n)^{d}} \bigwedge^{nk}(\mathbb{C}^{k})^{\otimes d}$$ and $\omega^n$ is an  element of latter space.
\end{proof}

\begin{remark}
	For fixed $d_0$ and sufficiently large $d$ by the above corollary we find that $(\omega_{d,k})^{k^{d_0-1}} \neq 0$ such that one of the coefficients up to a sign is equal to $\mathrm{AT}_{d_0}(k)$.
\end{remark}

\begin{remark}
	By analogous reasoning, for $n \le 2^{d-1}$ we can construct $\mathrm{SL}(n)^{\times d}$-invariant polynomial $\Delta_T \in \mathbb{C}[(\mathbb{C}^n)^{\otimes d}]_{2n}$ of minimal possible degree $2n$ (where $T \in B_{d,2}(n)$), such that $\Delta_T(I_n) = \mathrm{AT}(T) \neq 0$ (cf. \cite{ay22}),
	where $I_n = \sum_{i=1}^n (e_i)^{\otimes d}$ is the unit tensor (which is semistable for action of $\mathrm{SL}(n)^{\times d}$).
\end{remark}

\begin{corollary}
	For even $k$ and odd $d \ge 1 + \log_2 k$ we have $\omega^{k} \neq 0$.
\end{corollary}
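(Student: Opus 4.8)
The plan is to deduce this directly from the theorem $\omega^{2^{d-1}}\neq 0$ proved immediately above (valid for odd $d\ge 3$ and even $k$), together with the elementary observation that non-vanishing of a wedge power forces non-vanishing of all smaller powers. First I would rewrite the hypothesis $d\ge 1+\log_2 k$ in the equivalent form $k\le 2^{d-1}$. Then I would check that the theorem applies: since $k\ge 2$ is even we have $\log_2 k\ge 1$, so $d\ge 1+\log_2 k\ge 2$, and since $d$ is odd this gives $d\ge 3$; hence $\omega=\omega_{d,k}$ satisfies $\omega^{2^{d-1}}\neq 0$.

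The remaining step is to factor the power: in $\bigwedge V$ we have $\omega^{2^{d-1}} = \omega^{k}\wedge\omega^{\,2^{d-1}-k}$, which makes sense because $k\le 2^{d-1}$, and therefore $\omega^{k}=0$ would force $\omega^{2^{d-1}}=0$, a contradiction. (This is exactly the monotonicity argument already used in the proof of the preceding corollary on $g_d(n,k)$, where $\omega^{2^{d-1}}\neq 0$ was parlayed into $\omega^{n}\neq 0$ for all $n\le 2^{d-1}$; here one only needs the instance $n=k$.) I do not expect any genuine obstacle: all the mathematical content lives in the earlier theorem $\omega^{2^{d-1}}\neq 0$, and what is left is the arithmetic identity $2^{d-1}\ge k\iff d\ge 1+\log_2 k$ and the trivial multiplicativity of exterior powers. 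The only point worth a moment's care is confirming, as above, that the stated range of $d$ is contained in the range $d\ge 3$, odd, $k$ even, in which $\omega^{2^{d-1}}\neq 0$ was established.
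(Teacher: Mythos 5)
Your argument is correct and is exactly the intended one: the paper leaves this corollary without an explicit proof because it follows immediately from the preceding theorem ($\omega^{2^{d-1}}\neq 0$ for odd $d\ge 3$, even $k$) together with the monotonicity $\omega^{2^{d-1}}=\omega^{k}\wedge\omega^{2^{d-1}-k}$ already invoked in the previous corollary, after translating $d\ge 1+\log_2 k$ into $k\le 2^{d-1}$. Your check that odd $d\ge 1+\log_2 k$ forces $d\ge 3$ is the right (minor) point of care, and nothing further is needed.
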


\begin{remark}
From \cite{ay22} we know that one of the coefficients in expansion of $\omega^k$ is equal to $\mathrm{AT}_2(k)$.  Taking $d$ sufficiently large we also obtain that $\omega^k \neq 0$. One may consider relations between the coefficients in expansion of $\omega^k$ towards the Alon--Tarsi conjecture.
\end{remark}

\section{Some examples}

We illustrate some examples of sequences $\{g(\rho^n_k\, \pmb{\lambda})\}$.

\begin{example}\label{ex42}
Let us give some examples showing unimodality of $\{g(\rho^n_k\, \pmb{\lambda})\}$. 
\begin{itemize}
	\item[(a)] (Unimodal and symmetric) Let $\pmb{\lambda} = (42, 222, 321)$ so that for each partition $\lambda$ in this tuple we have $16\times 4 - {\lambda} = \rho^{13}_4 {\lambda}$, e.g. $16\times 4 - (42) = (4^{14}2)$. Then the sequence is unimodal and symmetric:
	$$
		\{g(\rho^n_4\, \pmb{\lambda})\}_{n=0,\ldots,13} = 
		1,15,128,728,2684,6395,9884,
		9884,6395,2684,728,128,15,1.
	$$
	\item[(b)] (Unimodal and not symmetric) Let $\pmb{\lambda} = (32, 221, 41)$. Then the sequence is unimodal but not symmetric: 
	$$
		\{g(\rho^n_4\, \pmb{\lambda})\}_{n=0,\ldots,13} = 
		1,8,54,281,1027,2531,4179,
		4584,
		3331,1613,521,114,18,2.
	$$
	\item[(c)] (Unimodality for odd $k$) While we discuss unimodality only for even $k$, it is frequently true for odd $k$ as well. For instance, let $\pmb{\lambda} = (32,221,311)$. Then the sequence is unimodal:
	$$\{g(\rho^n_3\, \pmb{\lambda})\}_{n=0,\ldots,6} = 1,4,7,7,5,3,1.$$ 
	Although for odd $k$ and $\pmb{\lambda} = (1 \times k)^{3}$ we know that $g(\pmb{\lambda}) = 1$ but $g(\rho_k\, \pmb{\lambda}) = 0$ which shows that the sequence is not always unimodal in this case.
\end{itemize}
\begin{example}[Log-concavity]
	Apart from unimodality, such sequences are frequently log-concave as well, that is when $g(\rho_k^n\, \pmb{\lambda})^2 \ge g(\rho_k^{n-1}\, \pmb{\lambda}) \cdot g(\rho_k^{n+1}\, \pmb{\lambda})$. For instance, the
	sequences from Example~\ref{ex42} are all log-concave. But this is not always true. For example, let $\pmb{\lambda} = ((1),(1),(1))$, then the following sequence is not log-concave but becomes so eventually:
	$$
	\{g(\rho^n_4\, \pmb{\lambda})\}_{n=0,\ldots,15} = 
	1,  1,  2, 6, 19,58,120,179,
	195,145,77,30,9, 2, 1, 1,
	$$
	where the log-concavity inequality holds for $n \in [4,12]$. It is interesting to find for which $\pmb{\lambda}$ the corresponding sequences of Kronecker coefficients are log-concave. 
	
	Another curious example is an `alternating' log-concavity observed in the following rectangular case:
	$$\{g_5(n, 2) = g(\rho^n_2\, \varnothing) \}_{n = 0, \ldots, 16} = 1, 1, 5, 11, 35, 52, 112, 130, 166, 130, 112, 52, 35, 11, 5, 1, 1,$$
	where the log-concavity inequality holds if $n$ is even and the reversed (log-convexity) inequality holds if $n$ is odd.
	Similarly, there is an example of an eventual `alternating' log-concavity in the following sequence:
	$$
		\{g_3(n,4) = g(\rho^n_4\, \varnothing)  \}_{n = 0,\ldots,16} = 
		1,1,1,2,5,6,13,14,18,14,13,6,5,2,1,1,1,
	$$
	where a similar alternating pattern holds for $n \in [4,12]$. 
	
	All sequences in our computations were observed to have such types of log-concavity properties. 
\end{example}
\end{example}

\section*{Acknowledgements}
The first author acknowledges the support by the Thematic Research Program ``Tensors: geometry, complexity and quantum entanglement", University of Warsaw, Excellence Initiative - Research University and Institute of Mathematics of the Polish Academy of Sciences, semester `Algebraic Geometry with Applications to Tensors and Secants'.

\end{document}